\setlist[enumerate,1]{label=(\alph*)}
\setlist[enumerate,2]{label=(\roman*)}
\setlist[enumerate]{font=\normalfont,leftmargin=2.8em}
\numberwithin{equation}{section}
\theoremstyle{plain}
\newtheorem{theorem}{Theorem}[section]
\newtheorem{proposition}[theorem]{Proposition}
\newtheorem{lemma}[theorem]{Lemma}
\newtheorem{corollary}[theorem]{Corollary}
\theoremstyle{definition}
\newtheorem{definition}[theorem]{Definition}
\newtheorem{example}[theorem]{Example}
\theoremstyle{remark}
\newtheorem{remark}[theorem]{Remark}
\newtheorem{question}[theorem]{Question}
\newcounter{char}
\ifnum\value{char}<27
\edef\csname\Alph{char}\Alph{char}\endcsname{\noexpand\mathbb{\Alph{char}}}
\edef\csname\Alph{char}bb\endcsname{\noexpand\mathbb{\Alph{char}}}
\edef\csname\Alph{char}ca\endcsname{\noexpand\mathcal{\Alph{char}}}
\edef\csname\Alph{char}sc\endcsname{\noexpand\mathscr{\Alph{char}}}
\edef\csname\Alph{char}fr\endcsname{\noexpand\mathfrak{\Alph{char}}}
\edef\csname\alph{char}fr\endcsname{\noexpand\mathfrak{\alph{char}}}
\edef\csname\Alph{char}bf\endcsname{\noexpand\mathbf{\Alph{char}}}
\edef\csname\alph{char}bf\endcsname{\noexpand\mathbf{\alph{char}}}
\edef\csname\Alph{char}rm\endcsname{\noexpand\mathrm{\Alph{char}}}
\edef\csname\alph{char}rm\endcsname{\noexpand\mathrm{\alph{char}}}
\renewcommand*{\AA}{\ifmmode\mathbb{A}\else{\r A}\fi}
\ifnum\value{char}<27
\edef\csname b\Alph{char}\endcsname{\noexpand\mathbb{\Alph{char}}}
\edef\csname c\Alph{char}\endcsname{\noexpand\mathcal{\Alph{char}}}
\DeclareMathOperator{\codim}{codim}
\DeclareMathOperator{\cone}{cone}
\DeclareMathOperator{\Ho}{H}
\DeclareMathOperator{\Hom}{Hom}
\DeclareMathOperator{\id}{id}
\DeclareMathOperator{\im}{im}
\DeclareMathOperator{\Pic}{Pic}
\DeclareMathOperator{\Proj}{Proj}
\DeclareMathOperator{\RHom}{R\mathcal{H}om}
\newcommand*{\isomto}{\xrightarrow{\smash{\raisebox{-0.5ex}{\ensuremath{\scriptstyle\sim}}}}}
\newcommand*{\Category}[1]{\operatorname{\mathbf{#1}}}
\newcommand*{\Db}{\Category{D}^{\mathrm{b}}}
\newcommand*{\Dperf}{\Category{D}^{\mathrm{perf}}}
\newcommand*{\Dsg}{\Category{D}^{\mathrm{sg}}}
\newcommand*{\perf}{\mathrm{perf}}
\newcommand*{\sg}{\mathrm{sg}}
\newcommand*{\Canonical}{\omega}
\newcommand*{\tY}{\widetilde{Y}}
\newcommand*{\tX}{\widetilde{X}}
\DeclareMathOperator{\Serre}{\mathbb{S}}
\renewcommand*{\cA}{\mathscr{A}}
\renewcommand*{\cB}{\mathscr{B}}
\renewcommand*{\cC}{\mathscr{C}}
\renewcommand*{\cD}{\mathscr{D}}
\renewcommand*{\cT}{\mathscr{T}}
\renewcommand*{\cR}{\mathscr{R}}
\newcommand*{\Ideal}{\mathcal{I}}
\newcommand*{\ie}{i.e.\@\xspace}
\newcommand*{\cf}{cf.\@\xspace}
\newcommand*{\loccit}{loc.~cit\xperiod}
\newcommand{\ignore}[1]{}
\begin{document}

\title{Kernels of categorical resolutions of nodal singularities}

\hypersetup{
  pdfauthor={W. Cattani, F. Giovenzana, S. Liu, P. Magni, L. Martinelli, L. Pertusi, J. Song},
  pdftitle={Kernels of categorical resolutions of nodal singularities}
}

\author[W. Cattani]{Warren Cattani}
\author[F. Giovenzana]{Franco Giovenzana}
\author[S. Liu]{Shengxuan Liu}
\author[P. Magni]{Pablo Magni}
\author[L. Martinelli]{Luigi Martinelli}
\author[L. Pertusi]{Laura Pertusi}
\author[J. Song]{Jieao Song}

\address{W.~Cattani: SISSA, via Bonomea 265, 34136 Trieste, Italy}
\email{wcattani@sissa.it}
\urladdr{https://www.math.sissa.it/users/warren-cattani}
\address{F.~Giovenzana: Fakult\"at f\"ur Mathematik Technische Universit\"at Chemnitz,
Reichenhainer Stra\ss e 39, 09126 Chemnitz, Germany}
\email{franco.giovenzana@mathematik.tu-chemnitz.de}
\urladdr{https://sites.google.com/view/franco-giovenzana}
\address{S. Liu: Mathematical Institute, University of Warwick, Coventry, CV4 7AL, United Kingdom}
\email{Shengxuan.Liu.1@warwick.ac.uk}
\urladdr{https://warwick.ac.uk/fac/sci/maths/people/staff/sliu/}
\address{P. Magni: IMAPP, Radboud University Nijmegen, Heyendaalseweg 135, 6525 AJ Nijmegen, The Netherlands}
\email{p.magni@math.ru.nl}
\address{L. Martinelli: Laboratoire de Mathématiques d'Orsay, Université Paris-Saclay, Rue Michel Magat, Bât. 307, 91405 Orsay, France}
\email{luigi.martinelli@universite-paris-saclay.fr}
\address{L.~Pertusi: Dipartimento di Matematica F. Enriques, Universit\`a degli Studi di Milano, Via Cesare Saldini 50, 20133 Milano, Italy}
\email{laura.pertusi@unimi.it}
\urladdr{http://www.mat.unimi.it/users/pertusi}
\address{J.~Song: Université Paris Cité, CNRS, IMJ-PRG, F-75013 Paris, France}
\email{jieao.song@imj-prg.fr}
\urladdr{https://webusers.imj-prg.fr/~jieao.song/}

\keywords{Categorical resolutions, nodal singularities, cubic fourfolds}

\subjclass[2010]{14J45, 14J17, 18E30}


\begin{abstract}
In this paper we study derived categories of nodal singularities. We show that for all nodal singularities there is a categorical resolution whose kernel is generated by a $2$ or $3$-spherical object, depending on the dimension. We apply this result to the case of nodal cubic fourfolds, where we describe the kernel generator of the categorical resolution as an object in the bounded derived category of the associated degree six K3 surface.

This paper originated from one of the problem sessions at the Interactive Workshop and Hausdorff School ``Hyperkähler Geometry'', Bonn, September 6-10, 2021.
\end{abstract}

\maketitle

\setcounter{tocdepth}{1}
\tableofcontents


\section{Introduction}

Resolution of singularities is a central topic studied in algebraic geometry. Since Hironaka \cite{Hir} proved that singularities of varieties in characteristic 0 can be resolved, there has been much progress in studying singularities, their resolutions, and their applications in birational geometry. On the other hand, derived categories provide a strong technique for understanding algebraic varieties, for example two smooth Fano (or general type) varieties with equivalent derived categories are isomorphic \cite{BOreconstruction}. For other varieties, derived categories can yield information about their birational geometry, for example flops of three dimensional varieties induce derived equivalences of their derived categories \cite{Bfd}.

One can often study a singularity by considering the properties of a resolution of it, and for relatively simple varieties and singularities, this might be done concretely. From the categorical viewpoint, let $Y$ be a singular variety and let $\sigma\colon \tY\rightarrow Y$ be a resolution of singularities, then we have derived functors between their derived categories
\[
  \sigma^*\colon \Dperf(Y)\rightarrow \Dperf(\tY),\quad \sigma_*\colon \Db(\tY)\rightarrow \Db(Y).
\]
Since $\tY$ is a smooth variety, we have $\Db(\tY)=\Dperf(\tY)$. The two functors are related by the projection formula
\[
  \sigma_*\sigma^*(\Fca)=\Fca \otimes \sigma_*\cO_{\tY}.
\]

Inspired by the geometric picture, Kuznetsov introduced in \cite{Kldcr} the definition of ``abstract'' categorical resolution of singularities (see \cref{def:cr}). In the case of $\Db(Y)$, it consists of a triple $(\widetilde{\cD}, \sigma_*, \sigma^*)$, where $\widetilde{\cD}$ is a geometric triangulated category, $\sigma_* \colon \widetilde{\cD} \to \Db(Y)$ and $\sigma^* \colon \Dperf(Y) \to \widetilde{\cD}$ are functors such that $\sigma^*$ is left adjoint to $\sigma_*$, and the natural morphism of functors $\id_{\cD^{\perf}}\rightarrow \sigma_*\sigma^*$ is an isomorphism.

Now an interesting question is whether or not this categorical viewpoint allows one to characterize the singularity geometrically. To shed some light on this, we investigate in this paper one special kind of singularities and their categorical resolutions, namely \emph{nodal singularities}.

Before stating our main result, we briefly recall a few notions. A resolution of singularities is crepant if its relative canonical class is trivial. Crepant resolutions are interesting since they are considered minimal resolutions in the case of Gorenstein varieties, but they are also rare. On the other hand, a categorical resolution of singularities $\sigma_*\colon \widetilde{\cD}\rightarrow \Db(Y)$ is \emph{weakly crepant} if the left adjoint $\sigma^*$ of $\sigma_*$ is also its right adjoint (see \cref{def:cr}). An object $\Tca\in\widetilde{\cD}$ is called $k$-\emph{spherical} if $\Hom^\bullet(\Tca,\Tca)=\mathbb{C}\oplus\mathbb{C}[-k]$ and there is an isomorphism of functors $\Hom(\Tca, -) = \Hom(-,\Tca[k])^\vee$; and $\Eca\in\widetilde{\cD}$ is \emph{exceptional} if $\Hom^\bullet(\Eca,\Eca)=\mathbb{C}$. 

\begin{theorem}\label{thm:ns}
Let $Y$ be a quasiprojective variety with an isolated nodal singularity, and assume $\dim(Y)\geq2$. Then there exists a weakly crepant categorical resolution $\sigma_*\colon \widetilde{\cD}\rightarrow \Db(Y)$ such that:
\begin{enumerate}
  \item The kernel $\ker(\sigma_*)$ of $\sigma_*$ is classically generated by a single object $\Tca$ which is $2$-spherical if $\dim(Y)$ is even, and $3$-spherical otherwise.
  \item The resolution $\sigma_*$ is a localization functor up to direct summands, \cf\cref{def:localiz-functor}. 
\end{enumerate}
\end{theorem}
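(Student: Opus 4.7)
The plan is to construct the categorical resolution from a geometric resolution of the node, via the general framework of Kuznetsov--Lunts on categorical resolutions built from Lefschetz decompositions of exceptional loci. Since an isolated nodal singularity is étale-locally modelled on the standard quadric cone, the statement is local around the singular point $p\in Y$, where $Y$ may be taken to be cut out in a smooth ambient space by an equation of the form $x_0^2+\cdots+x_n^2=0$ with $n=\dim Y$. I would let $\pi\colon\tY\to Y$ be the blowup at $p$: this is a geometric resolution whose exceptional divisor $E\subset\tY$ is a smooth quadric $Q\coloneqq Q^{n-1}$ of dimension $n-1$, with normal bundle $N_{E/\tY}\cong\cO_Q(-1)$. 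A direct discrepancy computation gives $\omega_{\tY/Y}\cong\cO_{\tY}((n-2)E)$, so $\pi$ is crepant only when $n=2$, and for $n\geq 3$ the weak crepancy must be arranged at the categorical level.

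The next step is to apply the Kuznetsov--Lunts construction. Starting from Kapranov's semiorthogonal decomposition
\[
  \Db(Q^{n-1})=\langle\Sigma,\cO_Q,\cO_Q(1),\ldots,\cO_Q(n-2)\rangle,
\]
where $\Sigma$ denotes the admissible subcategory generated by the spinor bundle $S$ in the odd-dimensional quadric case, or by the two spinor bundles $S_+,S_-$ in the even-dimensional quadric case, I would view the tail $\langle\cO_Q,\ldots,\cO_Q(n-2)\rangle$ as a rectangular Lefschetz decomposition of length $n-1$ with center $\cA_0=\langle\cO_Q\rangle$, and $\Sigma$ as the complementary orthogonal piece. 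The compatibility between this length and the discrepancy $n-2$ of $\pi$ is precisely the numerical condition that the Kuznetsov--Lunts machine requires to produce a weakly crepant categorical resolution $\sigma_*\colon\widetilde{\cD}\to\Db(Y)$, with $\widetilde{\cD}\subset\Db(\tY)$ cut out by a restriction condition on $E$ encoded by the Lefschetz filtration.

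To identify the kernel generator, I would use that $\ker(\sigma_*)$ is equivalent to the residual category $\Sigma$ pushed forward along $\iota\colon E\hookrightarrow\tY$, with Ext-groups corrected by the Koszul resolution of $\cO_E$ in $\tY$. Since $N_{E/\tY}^{\vee}\cong\cO_Q(1)$, for any $G\in\Db(Q)$ one obtains
\[
  \iota^!\iota_*G\cong G\oplus G(-1)[-1].
\]
Applied to $G=S$ in the one-spinor case, and to an appropriate mapping cone built from $S_+$ and $S_-$ (using the non-trivial morphisms coming from the $G(-1)[-1]$ summand) in the two-spinor case, this reduces the Ext-algebra of the candidate generator $\Tca$ to computations of $\mathrm{Hom}^\bullet_Q(S,S)$, $\mathrm{Hom}^\bullet_Q(S,S(-1))$, and the analogous terms between $S_+$ and $S_-$. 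The first factor is $\mathbb{C}$ by exceptionality in the Kapranov decomposition, and a Borel--Bott--Weil analysis of the remaining terms, combined with a parity consideration that distinguishes the two regimes, should place the secondary generator in degree $2$ when $n$ is even and in degree $3$ when $n$ is odd, producing the asserted spherical object.

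The localization statement in part (b) should follow formally from the Kuznetsov--Lunts framework: once $\ker(\sigma_*)$ is identified as an admissible subcategory, $\sigma_*$ descends to a fully faithful functor from $\widetilde{\cD}/\ker(\sigma_*)$ into $\Db(Y)$ whose idempotent completion is essentially surjective. The main obstacle I anticipate lies in the two-spinor case ($n$ odd): here $\Sigma$ has rank two, so producing a single classically generating object in $\ker(\sigma_*)$ whose Ext-algebra is exactly $\mathbb{C}\oplus\mathbb{C}[-3]$ requires carefully forming a specific extension of $\iota_*S_+$ and $\iota_*S_-$, rather than simply their direct sum. Pinning down the correct morphism (coming from the non-trivial Ext-groups introduced by the Koszul correction $G(-1)[-1]$) and verifying that the resulting cone is genuinely $3$-spherical with respect to the Serre functor of $\widetilde{\cD}$ is the technical heart of the argument.
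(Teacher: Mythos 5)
Your construction is broadly the same as the paper's: blow up the node, use Kapranov's (dual) Lefschetz decomposition of the quadric exceptional divisor with the spinor bundle(s) placed in the outermost block, feed this into the Kuznetsov--Lunts machine to get the weakly crepant resolution $\widetilde{\cD}\subset\Db(\tY)$, and then compute $\mathrm{Hom}$'s of pushed-forward spinor bundles using the divisor triangle for $j^*j_*$. The discrepancy computation and the choice of Lefschetz length are correct.

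The genuine gap is in the identification of $\ker(\sigma_*)$ and in part~(b), neither of which is ``formal'' as you assert. The Kuznetsov--Lunts framework by itself gives you the semiorthogonal decomposition $\Db(\tY)=\langle j_*\cB_{m-1}(1-m),\dots,j_*\cB_1(-1),\widetilde{\cD}\rangle$ and the weak crepancy, but it does \emph{not} tell you that $\sigma_*\colon\widetilde{\cD}\to\Db(Y)$ is a localization up to direct summands, nor does it hand you a generating set of $\ker(\sigma_*)$. The paper's key additional input is Efimov's theorem (\cite[Theorem~8.22]{Ehf}, quoted as \cref{thm:efimov} here), which shows that $\sigma_*\colon\Db(\tY)\to\Db(Y)$ is a localization up to summands with kernel classically generated by $j_*(\langle\cO_Q\rangle^\perp)$; using it requires verifying that all the infinitesimal neighborhoods of the node are nonrational loci, which is the content of \cref{Cor:ker} and \cref{rmk:nodalcase}. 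Your phrase ``$\ker(\sigma_*)$ is equivalent to the residual category $\Sigma$ pushed forward along $\iota$'' conflates the kernel of the full pushforward with the kernel of its restriction to $\widetilde{\cD}$: the latter is $\ker(\sigma_*)\cap\widetilde{\cD}=\mathbb{R}_{\widetilde{\cD}^\perp}\ker(\sigma_*)$, and working this out (via \cref{lem:R-mutate-odd} and \cref{lem:R-mutate-even}) is exactly what produces the single generator $j_*\cS$ in the even case and the cone $\cone(j_*\cS'\to j_*\cS''[2])$ in the odd case; the latter is not ``$j_*\Sigma$'' but a specific right mutation $\mathbb{R}_{j_*\cS''}(j_*\cS'[1])$. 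You correctly flag the odd-dimensional case as the technical heart and anticipate that a specific extension is needed, but without the Efimov ingredient and the mutation computation there is no argument that this extension classically generates the kernel, and no proof of part~(b) at all.

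A secondary point: you would still need to carry out the Serre-functor computation $\Serre_{\widetilde{\cD}}(\Tca)=\Tca[2]$ resp.\ $\Tca[3]$ (cf.\ \cref{prop:sperical-odd} and \cref{prop:seven}); your Borel--Bott--Weil remark gestures at the $\mathrm{Hom}^\bullet(\Tca,\Tca)$ computation only, which is condition~(b) of \cref{def:spherical with Serre}, and does not address condition~(c). Also note that since $Y$ is merely quasiprojective, $\Db(\tY)$ has no Serre functor; the paper handles this with a Serre functor for the pair $(\Db_Q(\tY),\Db(\tY))$ as in \cref{rem:quasiprojective}, another detail your sketch would need to absorb.
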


Note that the existence of a weakly crepant categorical resolution is a direct application of \cite{Kldcr}. We remark that the constructed categorical resolution has the advantage of being weakly crepant in any dimension, while the geometric resolution $\Db(\tY)$ is not. In \cite{Kldcr} another notion of crepancy, called strong crepancy, was introduced. The resolution $\widetilde{\cD}$ in \cref{thm:ns} is not strongly crepant, as computed in \cref{prop:notstrcrep}, if the dimension of $Y$ is bigger than $3$.

Our contribution is the explicit description of the kernel of the categorical resolution. We will define the resolution~$\widetilde{\cD}$ as an admissible component of $\Db(\tY)$, where $\widetilde{Y}$ is the blow-up at the isolated nodal singularity. Here, the object $\Tca$ has a clear geometric meaning: if $\dim(Y)$ is even, the object $\Tca$ is the pushforward to $\tY$ of the spinor bundle on the quadric exceptional divisor; and if $\dim(Y)$ is odd, the object $\Tca$ is described as the right mutation of the pushforward of one of the spinor bundles through the other, see \cref{prop:catresker}.

\begin{remark}
\cref{thm:ns} has been recently proven independently by Kuznetsov and Shinder in \cite[Theorem~5.8]{KuzSh} with a similar strategy. Furthermore, \cite[Theorem~5.2]{KuzSh} explains that one can drop ``classically'' and ``up to direct summands'' in \cref{thm:ns}; see also \cite{MS:22} for a discussion about this. Finally, note that the case when $Y$ is $1$-dimensional has been recently studied in \cite{sung2022remarks}.
\end{remark}

Based on \cref{thm:ns}, we propose the following definitions of categorical nodal singularities.
\begin{definition}[(Abstract) nodal category]\label{def:anc}
A triangulated category $\cT$ is called \emph{(abstract) nodal} if there is a categorical resolution $\sigma_*\colon \widetilde{\cD}\rightarrow \cT$ which is weakly crepant and whose kernel is (classically) generated by a single spherical object.
\end{definition}

\begin{definition}[Geometric nodal category]
A triangulated category $\cT$ is called \emph{geometric nodal} if it is an admissible subcategory of the derived category $\Db(Y)$ of a normal quasiprojective variety $Y$ which has only an isolated nodal singularity, such that $\cT^\perf$ is not smooth\footnotemark.
\end{definition}
\footnotetext{When $\cT$ is a triangulated category, we say in this article that $\cT^{\perf}$ is \emph{smooth} if $\cT$ can be realized as an admissible subcategory of the bounded derived category $\Db(X)$ of a smooth variety $X$. This means in particular that $\cT^{\perf}=\cT$ by \cite[Proposition~1.10]{Orl:06} and the fact that $\Dperf(X)=\Db(X)$.}

Using \cref{thm:ns} we show the following relation between the above definitions.

\begin{theorem}[\cref{thm:geom-nodal-implies-abstract-nodal2}]\label{thm:geom-nodal-implies-abstract-nodal}
If $\cT$ is a geometric nodal category, then $\cT$ is an abstract nodal category.
Furthermore, the constructed categorical resolution $\sigma_*\colon\widetilde{\cD}\to\cT$ as in the definition of an abstract nodal category is a localization up to direct summands.
\end{theorem}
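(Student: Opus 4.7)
The plan is to pull back the categorical resolution of $\Db(Y)$ provided by \cref{thm:ns} and restrict it to $\cT$. Let $\iota\colon\cT\hookrightarrow\Db(Y)$ be the admissible inclusion, and let $\sigma_*\colon\widetilde{\cD}\to\Db(Y)$ be the weakly crepant categorical resolution from \cref{thm:ns}, with $\widetilde{\cD}\subset\Db(\tY)$ admissible, kernel classically generated by the spherical object $\Tca$, and $\sigma_*$ a localization up to direct summands. I would define
\[
  \widetilde{\cD}_\cT := \{E \in \widetilde{\cD} : \sigma_*(E)\in\iota(\cT)\},
\]
a full thick triangulated subcategory of $\widetilde{\cD}$, and consider the restrictions $\bar\sigma_*\colon \widetilde{\cD}_\cT \to \cT$ and $\bar\sigma^*\colon \cT^{\perf} \to \widetilde{\cD}_\cT$; the latter is well defined because $\sigma_*\sigma^*A\simeq A$ for $A\in\cT^{\perf}$ by the unit isomorphism of the categorical resolution.

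The verifications that $(\widetilde{\cD}_\cT,\bar\sigma_*,\bar\sigma^*)$ satisfies the axioms of a weakly crepant categorical resolution of $\cT$ are then formal: since $\widetilde{\cD}_\cT\subset\widetilde{\cD}$ and $\cT\subset\Db(Y)$ are full, both adjunctions $\sigma^*\dashv\sigma_*$ and $\sigma_*\dashv\sigma^*$ restrict to $(\bar\sigma^*,\bar\sigma_*)$, and the unit $\id\to\bar\sigma_*\bar\sigma^*$ remains an isomorphism. Moreover, $\sigma_*(\Tca)=0$ forces $\Tca\in\widetilde{\cD}_\cT$, so $\ker(\bar\sigma_*)=\ker(\sigma_*)=\langle\Tca\rangle$ is still classically generated by $\Tca$, and the sphericity of $\Tca$ in $\widetilde{\cD}_\cT$ is inherited from sphericity in $\widetilde{\cD}$ once we know that $\widetilde{\cD}_\cT\subset\widetilde{\cD}$ is admissible, because then $\Serre_{\widetilde{\cD}_\cT}$ is obtained by composing the ambient Serre functor with the right adjoint of the inclusion, and $\Tca[k]$ lies in $\widetilde{\cD}_\cT$.

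The principal obstacle is therefore to establish the admissibility of $\widetilde{\cD}_\cT$ inside $\widetilde{\cD}$, which combined with the admissibility of $\widetilde{\cD}$ in $\Db(\tY)$ shows that $\widetilde{\cD}_\cT$ is geometric. I would prove this by exploiting \cref{thm:ns}(b): since $\sigma_*$ is a Verdier localization up to direct summands with kernel $\langle\Tca\rangle$, semiorthogonal decompositions of $\Db(Y)$ lift to semiorthogonal decompositions of $\widetilde{\cD}$ once $\langle\Tca\rangle$ is absorbed into one of the components. Applying this lifting to the admissible decomposition $\Db(Y)=\langle\iota(\cT)^\perp,\iota(\cT)\rangle$ produces a decomposition $\widetilde{\cD}=\langle\sigma_*^{-1}(\iota(\cT)^\perp),\widetilde{\cD}_\cT\rangle$ (with $\Tca$ placed on the right), exhibiting $\widetilde{\cD}_\cT$ as admissible in $\widetilde{\cD}$. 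Concretely, given $E\in\widetilde{\cD}$, the projection triangle of $\sigma_*E$ under the s.o.d.\ of $\Db(Y)$ lifts, modulo $\langle\Tca\rangle$, to the desired projection triangle in $\widetilde{\cD}$. The final claim, that $\bar\sigma_*$ is itself a localization up to direct summands, then follows formally: $\bar\sigma_*$ has the same kernel $\langle\Tca\rangle$ as $\sigma_*$, and restricting a Verdier localization to the preimage of an admissible subcategory yields a Verdier localization onto that subcategory with Karoubi completions compatible on both sides.
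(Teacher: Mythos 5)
Your high‐level plan (restrict the resolution $\sigma_*\colon\widetilde{\cD}\to\Db(Y)$ of \cref{thm:ns} to the piece sitting over $\cT$) is the same as the paper's, and your subcategory $\widetilde{\cD}_\cT=\{E\in\widetilde{\cD}\mid\sigma_*E\in\cT\}$ coincides with the paper's $\widetilde{\cT}\coloneqq\cT'^\perp$ (unwind the adjunction $\Hom(\sigma^*\imath\Gca,E)=\Hom(\imath\Gca,\sigma_*E)$). However, there are two genuine gaps.

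\emph{You omit the key preparatory step: showing that the complementary component $\cT'$ consists of perfect complexes.} The paper first analyses the singularity category $\Dsg(Y)=\Db(Y)/\Dperf(Y)$ and, using the explicit description of $\Dsg$ of an ODP in both parities (\cite{Orl:04,Orl:11,KPS:21}), proves that $\Dsg(Y)$ admits no non‑trivial semiorthogonal decomposition; combined with $\Dsg(Y)=\langle\cT^\sg,\cT'^\sg\rangle$ and the assumption that $\cT^\perf$ is \emph{not} smooth, this forces $\cT'^\sg=0$, i.e.\ $\cT'=\cT'^\perf\subset\Dperf(Y)$. Your proposal never establishes this, yet it is precisely what makes the rest of the argument work: the paper uses $\cT'\subset\Dperf(Y)$ to embed $\cT'$ into $\widetilde{\cD}$ via $\sigma^*\circ\imath$ (fully faithful because $\sigma_*\sigma^*=\id$ on perfect complexes, and admissible because it has right adjoint $\imath^!\circ\sigma_*$), yielding the honest semiorthogonal decomposition $\widetilde{\cD}=\langle\widetilde{\cT},\cT'\rangle$. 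Your proposed substitute — that a localization up to direct summands lets one ``lift'' any semiorthogonal decomposition of $\Db(Y)$ to one of $\widetilde{\cD}$ once $\langle\Tca\rangle$ is absorbed into a component — is not a standard fact, is not proved, and as far as I can see cannot be carried out without effectively re‑deriving that one of the two factors lies in $\Dperf(Y)$: the only way to lift the projection triangle of $\sigma_*E$ to a triangle in $\widetilde{\cD}$ with pieces in the correct subcategories is to apply $\sigma^*$ to the $\cT'$‑component, which requires that component to be perfect.

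\emph{The final assertion about $\bar\sigma_*$ being a localization up to direct summands is not ``formal.''} Having the same kernel is necessary but not sufficient: one must check that the induced functor $\widetilde{\cT}/\langle\Tca\rangle^\oplus\to\cT$ is fully faithful (the paper does this via \cite[Lemma~1.1]{Orl:06}, which controls how a Verdier quotient interacts with a semiorthogonal decomposition) and that its image is dense (the paper shows $\cT=(\im(\sigma_*)\cap\cT)^\oplus$ by a mutation argument: $\im(\sigma_*)\cap\cT=\bL_{\cT'}(\im(\sigma_*))$, then use $\im(\sigma_*)^\oplus=\Db(Y)$ and idempotent completeness of $\cT$). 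Neither of these is a one‑line deduction from ``restricting a Verdier localization to the preimage of an admissible subcategory.'' Your sketch would need both arguments spelled out, and both ultimately rely again on $\cT'\subset\Dperf(Y)$ (so that ${}^\perp\cT=\cT'\subset\im(\sigma_*)$).
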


However, there are some questions around the definition of abstract nodal category. 
\begin{question}\label{qst:problems}~
\begin{enumerate}
    \item The sphericalness property depends on the dimension of the variety. What should be a suitable definition of dimension of an abstract triangulated category?
    \item It is not clear to us whether the definition characterizes nodal singularities in the geometric picture. In other words, if $Y$ is a variety such that $\Db(Y)$ is abstract nodal, is then $\Db(Y)$  a geometric nodal category?
    \item Does the sphericalness property of the kernel generator already imply that the resolution is weakly crepant?
    \item Suppose that there is a $2$ or $3$-spherical object $\Tca$ in $\Db(X)$ where $X$ is a smooth projective variety, and let $\cT\subset\Db(X)$ be the triangulated subcategory classically generated by $\Tca$.
    Is the quotient $\Db(X)/\cT$ a \emph{geometric} nodal category?
\end{enumerate}
\end{question}

\begin{remark}
 Note that a positive answer to \cref{qst:problems}.(c) has been recently given in \cite[Lemma 5.8]{KS:22b}. 
\end{remark}

To address the last problem above, we study a concrete example: Let $Y \subset \PP^5$ be a nodal cubic fourfold, with hyperplane section class $H$. By \cite{Kcf} there is a semiorthogonal decomposition of $\Db(Y)$ given by
\[
  \Db(Y)=\langle\cA_Y,\cO_Y,\cO_Y(H),\cO_Y(2H)\rangle,
\]
where $\cA_Y\coloneqq\langle \cO_Y,\cO_Y(H),\cO_Y(2H) \rangle^{\perp}$ and $\cO_Y,\cO_Y(H),\cO_Y(2H)$ form an exceptional collection of line bundles. Then a categorical resolution of $\cA_Y$ is provided by $\Db(S)$, where $S$ is a K3 surface of degree $6$ obtained as the intersection in $\PP^4$ of a smooth quadric hypersurface $Q$ with a cubic hypersurface. In this situation, we have the following application of \cref{thm:ns}, which provides an answer to \cite[Remark~5.9]{Kcf}.

\begin{theorem} \label{prop:nodalc4}
If $Y$ is a nodal cubic fourfold, then the kernel of the categorical resolution $\Db(S) \to \cA_Y$ is classically generated by $t^*\cS$, where $t\colon S\rightarrow Q$ is the inclusion map of the K3 surface into the defining quadric $Q$ of $S$, and $\cS$ denotes the spinor bundle on $Q$.
\end{theorem}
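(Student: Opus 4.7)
The plan is to apply \cref{thm:ns} on the blow-up of $Y$ at its node, and then to transport the resulting kernel generator to $\Db(S)$ via Kuznetsov's identification in \cite{Kcf} of $\Db(S)$ as a categorical resolution of $\cA_Y$.

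First, I set up the geometry. Let $\sigma\colon \widetilde{Y}\to Y$ be the blow-up at the node, with exceptional divisor $\iota_E\colon E\hookrightarrow \widetilde{Y}$ a smooth three-dimensional quadric. The linear projection of $Y$ from the node extends to a morphism $\pi\colon \widetilde{Y}\to \PP^4$ realizing $\widetilde{Y}$ as the blow-up of $\PP^4$ along $S$; the $\pi$-exceptional divisor $j\colon E_\pi\hookrightarrow \widetilde{Y}$ is a $\PP^1$-bundle $p_\pi\colon E_\pi\to S$, and the restriction $\pi|_E\colon E\isomto Q$ is an isomorphism identifying $E$ with the defining quadric $Q$ of $S$. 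Moreover, the scheme-theoretic intersection $E\cap E_\pi$ is a copy of $S$, sitting in $E\cong Q$ via $t$ and in $E_\pi$ as a section of $p_\pi$. Since $\dim Y=4$ is even, \cref{thm:ns} provides a weakly crepant categorical resolution $\sigma_*\colon \widetilde{\cD}\to \Db(Y)$ whose kernel is classically generated by $\Tca=\iota_{E*}\cS_E$, where $\cS_E$ corresponds to the spinor bundle $\cS$ on $Q$ under $\pi|_E$.

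Next, I use Kuznetsov's description of $\Db(S)$ as a categorical resolution of $\cA_Y$: Orlov's blow-up formula for $\pi$, combined with appropriate mutations, yields a Fourier--Mukai equivalence $\Phi\colon \Db(S)\isomto \widetilde{\cA}_Y$ onto an admissible subcategory $\widetilde{\cA}_Y\subset \widetilde{\cD}$, such that the composite $\Db(S)\simeq \widetilde{\cA}_Y\hookrightarrow \widetilde{\cD}\xrightarrow{\sigma_*} \Db(Y)$ realizes the categorical resolution $\Db(S)\to \cA_Y$. By adjunction, any object in $\ker(\widetilde{\cD}\to \Db(Y))$ is right-orthogonal to $\sigma^*\cO_Y(iH)$ for $i=0,1,2$, and therefore lies in $\widetilde{\cA}_Y$; in particular $\Tca\in \widetilde{\cA}_Y$. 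It thus suffices to identify $\Phi^{-1}(\Tca)$ with $t^*\cS$.

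To this end, I compute via the right adjoint $\Psi$ of $\Phi$, which is of the form $G\mapsto (p_\pi)_*\bigl(j^{*}G\otimes \cO_{E_\pi}(k)\bigr)$ for the twist $k$ prescribed by Orlov's formula. Transversality of the two divisors $E$ and $E_\pi$ along $S$ yields, via a Koszul/base-change computation, $\Lder j^{*}\iota_{E*}\cS_E \simeq \iota_{S\hookrightarrow E_\pi,*}(t^*\cS)$; then, since $S$ is a section of $p_\pi$, pushforward along $p_\pi$ reduces to restriction and gives $\Psi(\Tca)\simeq t^*\cS\otimes L$, where $L=\cO_{E_\pi}(k)|_S$. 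The hard part is to verify that $L$ is trivial: this requires carefully tracking the Orlov twist, the normalization of the spinor bundle $\cS$ on $Q$, and the identification of the section $S\subset E_\pi$ as a specific rank-one piece of $\cN_{S/\PP^4}$. As a consistency check, the Mukai vector of $t^*\cS$ on the degree six K3 surface $S$ computes to $v(t^*\cS)=(2,-h_S,2)$ with $\langle v,v\rangle=-2$, confirming that $t^*\cS$ is $2$-spherical, in agreement with the general prediction of \cref{thm:ns}.
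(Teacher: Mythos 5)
Your plan follows the paper's approach closely: apply \cref{thm:ns} to get the kernel generator $j_*\cS$ in $\widetilde{\cD}$, check it lies in $\widetilde{\cA}_Y$, then transport it to $\Db(S)$ via the adjoint/inverse of Kuznetsov's equivalence $\Phi''\colon \Db(S)\isomto \widetilde{\cA}_Y$. The observation that $j_*\cS\in\widetilde{\cA}_Y$ by orthogonality to $\sigma^*\cO_Y(iH)$ is exactly the paper's \cref{lemma:sintildeA}, and the base-change computation $i^*j_*\cS \simeq s_*t^*\cS$ across the transversal intersection $Q\cap D=S$ is the paper's \cref{prop:SinS}.

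However, there is a genuine gap in the last step, and it is precisely where the paper does the real work. Your formula for $\Psi$ as $G\mapsto (p_\pi)_*\bigl(j^{*}G\otimes \cO_{E_\pi}(k)\bigr)$ describes the adjoint of the raw Orlov embedding, not of the equivalence $\Phi''$: the latter is obtained from Orlov's functor by composing with two right mutations through $\cO_{\tY}(-h)$ and $\cO_{\tY}(-2h)$, so its adjoint $\Psi$ also carries two left mutation functors $\bL_{\cO_{\tY}(3h-D)}$ and $\bL_{\cO_{\tY}(4h-D)}$ in addition to the twist. You must check that these mutations act trivially on $j_*\cS$ (the paper does this via $\Hom^\bullet_Q(\cO_Q(h),\cS)=0$). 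More crucially, you leave the triviality of the residual twist $L$ unproven, calling it ``the hard part.'' In fact it is not hard once you use the divisor relation $D=3h-H$ on $\tY$: the twist in $\Psi$ is $\cO_{\tY}(D-3h)=\cO_{\tY}(-H)$, and since $H$ is pulled back from $Y$ and $Q$ is contracted by $\sigma$, one has $j^*\cO_{\tY}(-H)=\cO_Q$, so the twist leaves $j_*\cS$ unchanged. Without identifying this relation, the proof does not establish that the answer is $t^*\cS$ rather than $t^*\cS\otimes L$ for some nontrivial $L$, and the Mukai vector consistency check you give does not distinguish $t^*\cS$ from its twists (both are $2$-spherical).
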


\begin{remark}
Note that the object $t^*\cS \in \Db(S)$ is 2-spherical.
This is similar to the situation of a nodal K3 surface, in which the spherical objects $\cO_{E_i}(-1)$ appear in the kernel, where $E_i$ are the exceptional curves in the resolution, \cf\cite[Lemma~2.3]{Kuz:21} and \cite[Lemma~3.1]{Bfd}. 
\end{remark}

\subsection{Plan of Paper}
In \cref{sec:preliminary}, we recall the definitions and theorems that we use in the following sections. In particular, we review the definitions and properties of nodal singularities, the construction of their  categorical resolution via a Lefschetz decomposition following \cite{Kldcr}, and some results in \cite{Ehf} which we use to compute the kernel of these categorical resolutions.

\cref{sec:generalcase} is about the proof of \cref{thm:ns}. We first use a Lefschetz decomposition of quadrics to construct a categorical resolution of varieties with an isolated nodal singularity as in \cite{Kldcr}. Then by results of \cite{Ehf}, we find the kernel generator and check the sphericalness property.  

In \cref{sec:nodalcubic4}, we focus on the case of nodal cubic fourfolds, proving \cref{prop:nodalc4} as a consequence of Theorem \ref{thm:ns}. 

\subsection{Notations and Conventions}

By variety we mean an integral scheme that is separated and of finite type over $\mathbb{C}$. If not otherwise mentioned, all functors between derived categories are implicitly derived. We use $\mathbb{R}_\cA$ and $\mathbb{L}_\cA$ to denote the right and left mutation with respect to an admissible subcategory $\cA$, and use $\mathbb{T}_B$ to denote the twist functor $-\otimes B$. We define $\Hom^\bullet(-,-)=\bigoplus_{i}\Hom(-,-[i])[-i]$. If $\cT$ is a triangulated category, a \emph{classical generator} of $\cT$ is an object $\Tca\in\cT$ such that the smallest  strictly full triangulated subcategory of $\cT$ which is closed under direct summands and containing $\Tca$ is equal to $\cT$, in symbols $\cT=\langle\Tca\rangle^{\oplus}$.
We take the liberty to write most isomorphisms as equalities.

\subsection*{Acknowledgments} This project started at the Interactive Workshop and Hausdorff School ``Hyperkähler Geometry'' at the Mathematical Institute of the University of Bonn in September 2021. It is our pleasure to thank this institution and the organizers of the workshop for the very stimulating atmosphere. We are also grateful to Alex Perry who proposed the question of computing the kernel of the categorical resolution of a nodal cubic fourfold, motivating our working group. We thank Federico Barbacovi, Thorsten Beckmann, Daniel Huybrechts, Chunyi Li, Emanuele Macr\`i for interesting conversations related to this work. We also thank Alexander Kuznetsov and Evgeny Shinder for helpful comments on a draft of this article, and the referee for careful reading of the paper.

W.C.\ was supported by the national research project PRIN 2017 Birational geometry and moduli spaces. F.G.\ was supported by the DFG through the research grant Le 3093/3-2. S.L.\ is funded by the CSC-Warwick scholarship and Royal Society URF$\backslash$R1$\backslash$201129 ``Stability condition and application in algebraic geometry”. P.M.\ was supported by the Netherlands Organization for Scientific Research (NWO) under project number 613.001.752. L.M.\ was supported by the ERC Synergy Grant 854361 HyperK. L.P.\ was supported by the national research project PRIN 2017 Moduli and Lie Theory. J.S.\ was supported by a doctoral contract at Université Paris Cité.

{\Small\subsection*{Statements \& Declarations}

\noindent \textit{Competing interests.}  The authors read and 
approved the final manuscript. The author has no relevant financial or 
non-financial interests to disclose.

\noindent \textit{Availability of data and material.} This paper has 
no associated data and material.}


\section{Preliminaries} \label{sec:preliminary}
In this section, we briefly recall the notation and tools that we will use in subsequent sections. In particular, we discuss nodal singularities, semiorthogonal decompositions, categorical resolutions arising from Lefschetz decompositions and some results from \cite{Ehf} that allow to compute the kernel of certain categorical resolutions. Finally we review some properties of spinor bundles on quadrics, and perform some cohomology computations we need in later sections.
\subsection{Nodal singularities} \label{subsec:nodalsing}
Let $X$ be a variety of dimension $n$. We recall the definition of a nodal singularity, which is the simplest kind of hypersurface singularity.
\begin{definition}\label{def:nodalpoint}
An isolated singular point $x\in X$ is a \emph{nodal point} (or \emph{ordinary double point}) if the variety $X$ is formally locally around $x$ isomorphic to the singularity defined by the origin of the zero locus of $x_0^2+x_1^2+x_2^2+\dots+x_n^2$ inside $\bA_\mathbb{C}^{n+1}$, \ie $\widehat{\cO_{X,x}}\simeq\CC[\![x_0,\dots,x_n]\!]/(x_0^2+\dots+x_n^2)$.
\end{definition}

\begin{remark}
Since we are working over $\bC$, we can replace ``formally locally'' with ``analytically locally'' and obtain an equivalent definition. Indeed, the completions of the algebraic and the analytic local rings coincide, \cf \cite[Proposition~3]{GAGA}, and two analytic germs are equivalent if and only if the completions of their analytic local rings are isomorphic, \cf \cite[Theorem~4.2.3]{ishii}.
\end{remark}

Assume that $X$ has only one nodal singularity $x\in X$ and is smooth elsewhere. 
Since hypersurface singularities are Gorenstein, so is $X$ (recall that being Gorenstein can be checked after completion of local rings, \cf\cite[Proposition~3.1.19.(c)]{BH:98}). Now let  $\sigma\colon \widetilde{X}\rightarrow X$ be the blow-up of $X$ at $x$. Then $\sigma$ is a resolution of singularities whose exceptional locus $j\colon Q\rightarrow \widetilde{X}$ is the smooth projective quadric hypersurface defined by $x_0^2+x_1^2+x_2^2+...+x_n^2$.
The conormal bundle of $Q\subset\tX$ is $\cO_Q(1)=\cN^{\vee}_{Q/\tX}= j^*\cO_{\widetilde{X}}(-Q)$, since $Q$ is the exceptional Cartier divisor of a blow-up.

\subsection{Semiorthogonal decompositions} We recall the definitions of admissible subcategories and exceptional collections, which are the main source of semiorthogonal decompositions.
Denote by $\cT$ a triangulated category. 

\begin{definition}[Admissible subcategory]
Let $\cA\subset \cT$ be a full triangulated subcategory. Then $\cA$ is called \emph{admissible} if the embedding functor of $\cA$ into $\cT$ admits left and right adjoints.
\end{definition}

\begin{definition}[Semiorthogonal decomposition]
Let $\cA_1,\dots,\cA_m$ be a sequence of admissible subcategories of $\cT$. Then we say that $\cA_1,\dots,\cA_m$ is a \emph{semiorthogonal collection} if $\Hom(\cA_i,\cA_j)=0$ for all $i>j$. If in addition this collection generates $\cT$, we say that it forms a \emph{semiorthogonal decomposition} of $\cT$, which we denote by 
\[
  \cT=\langle \cA_1,\dots,\cA_m\rangle.
\]
\end{definition}
Any admissible subcategory $\cA$ induces a semiorthogonal decomposition: Set
\begin{align*}
    \cA^\perp&=\{\Fca\in \cT\mid \Hom(\cA,\Fca)=0\},\\
    {}^{\perp}\cA&=\{\Fca\in \cT\mid \Hom(\Fca,\cA)=0\},
\end{align*}
then there are two semiorthogonal decompositions
\begin{equation*}
    \cT=\langle \cA^\perp,\cA \rangle, \quad\quad \cT=\langle \cA, {}^{\perp}\cA \rangle.
\end{equation*}
We define the left mutation functor $\bL_{\cA}$ and the right mutation functor $\bR_{\cA}$ to fit into the following exact triangles, respectively,
\[
  \alpha\alpha^!\rightarrow \id\rightarrow \bL_{\cA},\quad\quad \bR_{\cA}\rightarrow \id\rightarrow \alpha\alpha^*,
\]
where $\alpha\colon \cA\rightarrow\cT$ is the embedding functor and $\alpha^!$ and $\alpha^*$ are its right and left adjoints, respectively. Note that the semiorthogonality ensures that the cones in the triangles above are functorial. Indeed, use that the decomposition of an object of $\cT$ into semiorthogonal components is functorial to deduce that $\im(\bR_{\cA})\subset {}^{\perp}\cA$, and then consider the long exact sequences arising from applying, for example, $\Hom(\bR_{\cA}(\Fca),-)$ to the triangles above. The following lemmata describe the interaction between mutation functors and semiorthogonal decompositions.

\begin{lemma}{\cite[Corollary~2.9]{Kcf}}\label{SOD-mutations}
Assume that $\cT=\langle \cA_1,\dots,\cA_m\rangle$ is a semiorthogonal decomposition. Then for each $1 \le i \le m-1$ there is a semiorthogonal decomposition
\[
\cT=\langle \cA_1,\dots,\cA_{i-1}, \bL_{\cA_i}(\cA_{i+1}), \cA_i,\cA_{i+2}, \dots,\cA_m\rangle
\]
and for each $2 \le i \le m$ there is a semiorthogonal decomposition
\[
\cT=\langle \cA_1,\dots,\cA_{i-2},\cA_i, \bR_{\cA_i}(\cA_{i-1}),\cA_{i+1}, \dots,\cA_m\rangle.
\]
\end{lemma}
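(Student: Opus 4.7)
The plan is to reduce the $m$-term statement to the two-term case and then establish the mutation equivalence inside $\langle \cA_i, \cA_{i+1}\rangle$.

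For the reduction, the subcategory $\cC \coloneqq \langle \cA_i, \cA_{i+1}\rangle$ is admissible in $\cT$ (a standard consequence of regrouping adjacent admissible pieces of an SOD), so $\cT = \langle \cA_1, \dots, \cA_{i-1}, \cC, \cA_{i+2}, \dots, \cA_m\rangle$ is itself an SOD. Any internal rewriting $\cC = \langle \cA_i', \cA_{i+1}'\rangle$ then yields the refined SOD $\cT = \langle \cA_1, \dots, \cA_{i-1}, \cA_i', \cA_{i+1}', \cA_{i+2}, \dots, \cA_m\rangle$, since $\cA_i', \cA_{i+1}' \subset \cC$ automatically inherit the external semiorthogonalities from $\cC$. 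It thus suffices to verify the claim for a two-term SOD $\cC = \langle \cA, \cB\rangle$.

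The key step is to prove the classical \emph{mutation equivalence}: $\bL_\cA$ and $\bR_\cA$ restrict to mutually inverse equivalences $\bL_\cA \colon \cB \xrightarrow{\sim} \cA^\perp$ and $\bR_\cA \colon \cA^\perp \xrightarrow{\sim} \cB$ (here $\cB = {}^\perp\cA$ inside $\cC$, automatic from a two-term SOD). The inclusion $\bL_\cA(\cC) \subseteq \cA^\perp$ is immediate by applying $\Hom(\alpha(a), -)$ to the defining triangle $\alpha\alpha^!(x) \to x \to \bL_\cA(x)$: the map $\Hom(\alpha(a), \alpha\alpha^!(x)) \to \Hom(\alpha(a), x)$ is an iso because $\alpha$ is fully faithful, so $\alpha^!\alpha \simeq \id$. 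Dually $\bR_\cA(\cC) \subseteq {}^\perp\cA = \cB$. For the composition $\bR_\cA \circ \bL_\cA\vert_\cB$, I would apply $\bR_\cA$ to the defining triangle of $\bL_\cA$ at $b \in \cB$: the first term lies in $\cA$ and so is killed, while $\bR_\cA(b) = b$ since $b \in {}^\perp\cA$, yielding $\bR_\cA \circ \bL_\cA\vert_\cB \simeq \id_\cB$; the reverse composition follows symmetrically by applying $\bL_\cA$ to the defining triangle of $\bR_\cA$ at $y \in \cA^\perp$.

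Since $\cA$ is admissible in $\cC$, the complementary SOD $\cC = \langle \cA^\perp, \cA\rangle$ exists with $\cA^\perp$ admissible; identifying $\bL_\cA(\cB) = \cA^\perp$ through the equivalence above then gives the desired $\cC = \langle \bL_\cA(\cB), \cA\rangle$. The right-mutation statement is entirely dual, through the equivalence $\bR_\cB \colon \cA \xrightarrow{\sim} {}^\perp\cB$ and the SOD $\cC = \langle \cB, {}^\perp\cB\rangle$. I expect the only delicate bookkeeping to be the verification of the two-sided inverse relation between $\bL_\cA$ and $\bR_\cA$; once that is in place, admissibility of the mutated piece and all external semiorthogonalities assemble formally from the reduction step above.
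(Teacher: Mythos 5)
The paper does not supply its own proof of this lemma; it is cited directly from \cite[Corollary 2.9]{Kcf}, so there is nothing internal to compare against. Your argument --- reducing to the two-term case by grouping $\langle\cA_i,\cA_{i+1}\rangle$ into an admissible piece, then showing $\bL_\cA$ and $\bR_\cA$ restrict to mutually inverse equivalences between $\cB$ and $\cA^\perp$ via the defining triangles --- is correct and is the standard proof from the original references on mutations; the essential surjectivity of $\bL_\cA\vert_\cB$ onto $\cA^\perp$ (which you get from $\bL_\cA\bR_\cA\vert_{\cA^\perp}\simeq\id$) is implicit but present.
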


\begin{lemma}{\cite[Lemma~2.2]{Kcy}}\label{mutations-SOD}
Let $\cA$ be an admissible subcategory of $\cT$. Assume that $\cA$ admits a semiorthogonal decomposition $\cA=\langle \cA_1,\dots,\cA_m\rangle$. Then
\[
\bL_\cA = \bL_{\cA_1} \circ \cdots \circ \bL_{\cA_m} \quad \text{and} \quad \bR_\cA = \bR_{\cA_m} \circ \cdots \circ \bR_{\cA_1}.
\]
\end{lemma}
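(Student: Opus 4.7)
The plan is to prove both identities by induction on $m$, with all the real work concentrated in the base case $m=2$; the case $m=1$ is a tautology.

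For $m=2$, set $\cA=\langle\cA_1,\cA_2\rangle$ and fix $\Fca\in\cT$. I would splice the two defining triangles
\[
\alpha_2\alpha_2^!\Fca\to\Fca\to\bL_{\cA_2}\Fca\quad\text{and}\quad\alpha_1\alpha_1^!\bL_{\cA_2}\Fca\to\bL_{\cA_2}\Fca\to\bL_{\cA_1}\bL_{\cA_2}\Fca
\]
via the octahedral axiom applied to the composition $\Fca\to\bL_{\cA_2}\Fca\to\bL_{\cA_1}\bL_{\cA_2}\Fca$. This exhibits the cone of $\Fca\to\bL_{\cA_1}\bL_{\cA_2}\Fca$ as an extension of objects of $\cA_1$ and $\cA_2$, hence as an object of $\cA$. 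I would then check that $\bL_{\cA_1}\bL_{\cA_2}\Fca$ lies in $\cA^\perp=\cA_1^\perp\cap\cA_2^\perp$: the containment in $\cA_1^\perp$ holds by construction, and the containment in $\cA_2^\perp$ follows from $\bL_{\cA_2}\Fca\in\cA_2^\perp$ together with $\alpha_1\alpha_1^!\bL_{\cA_2}\Fca\in\cA_1\subset\cA_2^\perp$, the last inclusion being the semiorthogonality $\Hom(\cA_2,\cA_1)=0$ from the SOD of $\cA$. By uniqueness of decomposition with respect to $\cT=\langle\cA,\cA^\perp\rangle$, this identifies $\bL_{\cA_1}\bL_{\cA_2}\Fca$ with $\bL_\cA\Fca$ functorially in $\Fca$.

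For the inductive step I would set $\cA'=\langle\cA_1,\ldots,\cA_{m-1}\rangle$. The semiorthogonality of $\cA=\langle\cA_1,\ldots,\cA_m\rangle$ implies at once that $\Hom(\cA_m,\cA')=0$, so $\cA=\langle\cA',\cA_m\rangle$ is itself a two-term SOD. The base case gives $\bL_\cA=\bL_{\cA'}\circ\bL_{\cA_m}$, and the induction hypothesis gives $\bL_{\cA'}=\bL_{\cA_1}\circ\cdots\circ\bL_{\cA_{m-1}}$, so the desired formula follows by composition. The identity for right mutations is obtained by a symmetric argument, or equivalently by dualizing and applying the left-mutation statement in $\cT^{\mathrm{op}}$, where the SOD $\langle\cA_1,\ldots,\cA_m\rangle$ becomes $\langle\cA_m,\ldots,\cA_1\rangle$.

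The main obstacle, such as it is, lies in the base case: one must verify that applying $\bL_{\cA_1}$ to $\bL_{\cA_2}\Fca$ does not move the output out of $\cA_2^\perp$. This is precisely what forces the \emph{outside-in} order of composition and is the only point where the semiorthogonality of the original decomposition of $\cA$ is used in an essential way; the remaining steps are formal consequences of the uniqueness and functoriality of semiorthogonal decompositions.
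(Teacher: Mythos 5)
Your proof is correct, and it is the standard argument for this fact; the paper itself gives no proof (it cites \cite[Lemma~2.2]{Kcy}), so there is nothing in the text to compare against beyond noting that your octahedron-plus-uniqueness approach is exactly what one finds in the standard references. The base case is handled cleanly: the octahedron shows the fiber of $\Fca\to\bL_{\cA_1}\bL_{\cA_2}\Fca$ lies in $\cA$ as an extension of objects of $\cA_2$ and $\cA_1$, and you correctly isolate where semiorthogonality of the inner SOD is used, namely in checking $\bL_{\cA_1}\bL_{\cA_2}\Fca\in\cA_2^\perp$ via $\cA_1\subset\cA_2^\perp$. The inductive step is fine once one observes (as you implicitly do) that $\cA'=\langle\cA_1,\dots,\cA_{m-1}\rangle$ is admissible in $\cT$ by transitivity of admissibility. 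One small notational slip: with the paper's conventions the two-term SOD should be written $\cT=\langle\cA^\perp,\cA\rangle$, not $\langle\cA,\cA^\perp\rangle$; the content of your uniqueness argument is unaffected. The dualization for right mutations is also standard and valid.
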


Examples of admissible subcategories are given by exceptional objects.

\begin{definition}[Exceptional object]
An object $\Eca\in \cT$ is \emph{exceptional} if $\Hom^\bullet(\Eca,\Eca)=\CC$.\footnotemark
\footnotetext{If the category $\cT$ is not proper, one also requires that the functors $\Hom^\bullet(\cE, -)$ and $\Hom^\bullet(-, \cE)$ on $\cT$ take values in the category of finite-dimensional graded vector spaces.}
\end{definition}

\begin{definition}[Exceptional collection]
A set of objects $\lbrace \Eca_1, \dots, \Eca_m \rbrace$ in $\cT$ is an \emph{exceptional collection} if each $\Eca_i$ is exceptional, and $\Hom^\bullet(\Eca_i, \Eca_j)=0$ when $i>j$.
\end{definition}

If $\Eca$ is an exceptional object in a triangulated category $\cT$, then the full triangulated subcategory $\cA=\langle \Eca\rangle$ generated by $\Eca$ is admissible, \cf \cite{bondal1990representable}; the mutations of an object $\Fca\in \cT$ can be described explicitly as
\begin{equation*}
    \bL_\Eca (\Fca) = \cone (\Hom^\bullet(\Eca,\Fca)\otimes \Eca\rightarrow \Fca), \quad\quad \bR_\Eca (\Fca) = \cone (\Fca\rightarrow \Hom^\bullet(\Fca,\Eca)^{\vee}\otimes \Eca)[-1].
\end{equation*}
Similarly, an exceptional collection gives rise to a semiorthogonal collection.

In this paper, we consider a special kind of semiorthogonal decomposition.

\begin{definition}[Lefschetz decomposition]{\cite[Definition~2.16]{Kldcr}}
Let $X$ be a variety with a (not necessarily ample) line bundle $\cO(1)$.
A \emph{Lefschetz decomposition} of $\Db(X)$ is a semiorthogonal decomposition of the form 
\begin{equation*}
    \Db(X)=\langle \cB_0, \cB_1(1), \dots , \cB_{m-1}(m-1)\rangle \quad \text{where} \quad 0\subset \cB_{m-1}\subset \dots \subset \cB_1 \subset \cB_0 \subset \Db(X).
\end{equation*}
A Lefschetz decomposition is \emph{rectangular} if $\cB_0=\cB_1=\dots=\cB_{m-1}$.
Similarly, a \emph{dual Lefschetz decomposition} is a semiorthogonal decomposition of the form
\begin{equation*}
    \Db(X)=\langle \cB_{m-1}(1-m),\dots, \cB_{1}(-1), \cB_0 \rangle \quad \text{where} \quad 0\subset \cB_{m-1}\subset \dots \subset \cB_1 \subset \cB_0 \subset \Db(X).
\end{equation*}
\end{definition}

\subsection{Spherical objects and Serre functors}
Let $\cT$ be a triangulated category. We recall the definition of spherical objects, which play an important role in this paper.
\begin{definition}[$k$-spherical object]{\cite[Definition~2.14, Lemma~2.15]{STBgadccs}}\label{def:spherical with Serre}
Let $k\in\bN$ be a natural number. An object $\Tca \in \cT$ is called $k$-\emph{spherical} if
\begin{enumerate}
\item the functors $\Hom^\bullet(\Tca, -)$ and $\Hom^\bullet(-, \Tca)$ on $\cT$ take values in the category of finite-dimensional graded vector spaces;
\item $\Hom^\bullet(\Tca,\Tca)=\mathbb{C}\oplus\mathbb{C}[-k]$;
\item for any $\Fca \in \cT$ there is an isomorphism $\Hom(\Tca,\Fca)=\Hom(\Fca, \Tca[k])^\vee$, which is functorial in $\Fca$.
\end{enumerate}
\end{definition}

Condition (c) in \cref{def:spherical with Serre} can be simplified in some situations, for instance when $\cT$ has a Serre functor.

\begin{definition}[Serre functor]
Let $\cT$ be a triangulated category. 
An equivalence $\Serre\colon \cT\rightarrow\cT$ is called \emph{Serre functor} if for any two objects $\Fca,\Gca\in\cT$ there is a bifunctorial isomorphism
\[
  \Hom(\Fca,\Gca)=\Hom(\Gca,\Serre(\Fca))^{\vee}.
\]
\end{definition}

For instance, by Grothendieck--Verdier duality \cite[Theorem~3.34]{huybrechts2006fourier} the Serre functor of the derived category $\Db(X)$ of a smooth projective variety $X$ of dimension $n$ is given by $\mathbb{T}_{\Canonical_X} \circ [n]$, where $\Canonical_X$ is the canonical bundle of $X$. The Serre functor is unique up to isomorphisms of exact functors.
The following lemma describes the relation between Serre functors and semiorthogonal decompositions with two components.

\begin{lemma}\label{lemma:serre1}{\cite[Lemma~2.11]{Kcf},\cite[Lemma~2.6]{Kcy}}
Let $\cT=\langle\cA,\cB\rangle$ be a semiorthogonal decomposition of a triangulated category. Assume that $\cT$ has Serre functor $\Serre_{\cT}$. Then
\begin{enumerate}
    \item there are semiorthogonal decompositions $\cT=\langle\Serre_{\cT}(\cB),\cA\rangle = \langle \cB,\Serre^{-1}_{\cT}(\cA)\rangle$, and
    \item $\cA$ and~$\cB$ have Serre functors $\Serre_{\cA}$ and $\Serre_{\cB}$, respectively, satisfying the relations
\[
  \Serre_{\cB} = \bR_{\cA}\circ \Serre_{\cT}, \quad\quad \Serre^{-1}_{\cA}=\bL_\cB\circ \Serre^{-1}_{\cT}.
\]
\end{enumerate}

\end{lemma}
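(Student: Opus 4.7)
For part (a), the plan is to identify the orthogonals directly via Serre duality. By admissibility of $\cA$, the SOD $\cT = \langle \cA^\perp, \cA \rangle$ holds, and the hypothesis $\cT = \langle \cA, \cB \rangle$ gives $\cB = {}^\perp \cA$. So it suffices to show $\cA^\perp = \Serre_\cT(\cB)$. Since $\Serre_\cT$ is an autoequivalence of $\cT$, it commutes with taking left orthogonals: for any subcategory $\cC$ one has $\Serre_\cT({}^\perp \cC) = {}^\perp \Serre_\cT(\cC)$, because $\Hom(\Serre_\cT X, \Serre_\cT Y) = \Hom(X, Y)$ and $\Serre_\cT$ is essentially surjective. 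Combined with the Serre duality isomorphism $\Hom(A, X) = \Hom(X, \Serre_\cT(A))^\vee$, this yields
\[
  \cA^\perp = {}^\perp \Serre_\cT(\cA) = \Serre_\cT({}^\perp \cA) = \Serre_\cT(\cB),
\]
giving the first decomposition. The second one, $\cT = \langle \cB, \Serre^{-1}_\cT(\cA) \rangle$, follows by the symmetric computation ${}^\perp \cB = \Serre^{-1}_\cT(\cB^\perp) = \Serre^{-1}_\cT(\cA)$.

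For part (b), the strategy is to define $\Serre_\cB := \bR_\cA \circ \Serre_\cT|_\cB$ as a candidate endofunctor of $\cB$, then verify separately that it is an autoequivalence and that it satisfies the Serre duality property. The first claim follows from (a): $\Serre_\cT|_\cB$ is an equivalence from $\cB$ onto $\cA^\perp = \Serre_\cT(\cB)$, while \cref{SOD-mutations} applied to the SOD $\cT = \langle \cA^\perp, \cA \rangle$ shows that the restricted mutation $\bR_\cA|_{\cA^\perp}$ is an equivalence from $\cA^\perp$ onto ${}^\perp \cA = \cB$, so the composition is an autoequivalence of $\cB$.

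To verify Serre duality, I would take $B_1, B_2 \in \cB$ and apply $\Hom(B_2, -)$ to the mutation triangle
\[
  \bR_\cA(\Serre_\cT(B_1)) \to \Serre_\cT(B_1) \to \alpha\alpha^*(\Serre_\cT(B_1)).
\]
Since the rightmost term lies in $\cA$ and $\Hom(\cB, \cA) = 0$, the resulting long exact sequence collapses to $\Hom(B_2, \Serre_\cB(B_1)) = \Hom(B_2, \Serre_\cT(B_1))$, and Serre duality on $\cT$ rewrites the right-hand side as $\Hom(B_1, B_2)^\vee$. Hence $\Serre_\cB$ is the Serre functor of $\cB$. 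The formula $\Serre^{-1}_\cA = \bL_\cB \circ \Serre^{-1}_\cT|_\cA$ is proved by an entirely parallel argument, with $\cA$ and $\cB$ interchanged, $\Serre^{-1}_\cT$ in place of $\Serre_\cT$, and $\bL_\cB$ in place of $\bR_\cA$.

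The proof is essentially bookkeeping, and the only real subtlety is matching sides correctly: $\bR_\cA$ (rather than $\bL_\cA$) appears in the formula for $\Serre_\cB$ precisely because its image is ${}^\perp \cA = \cB$, so it is the projection to $\cB$ that preserves $\Hom(B_2, -)$ via the semiorthogonality $\Hom(\cB, \cA) = 0$.
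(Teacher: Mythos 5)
Your proof is correct and is the standard argument for this fact; the paper itself does not prove the lemma but cites \cite[Lemma~2.11]{Kcf} and \cite[Lemma~2.6]{Kcy}, whose proofs follow exactly your outline (identify the orthogonals via Serre duality for part~(a), then use the mutation triangle and the vanishing $\Hom(\cB,\cA)=0$ to collapse the long exact sequence for part~(b)). The sign/side bookkeeping at the end — that $\bR_\cA$ lands in $\cB={}^\perp\cA$ and so is invisible to $\Hom(\cB,-)$ — is indeed the only nontrivial point, and you handled it correctly.
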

\begin{remark}\label{rem:spherical with serre} Assume that $\cT$ admits a Serre functor $\Serre$. By the Yoneda lemma, in the \cref{def:spherical with Serre} of a $k$-spherical object $\Tca \in \cT$ we can replace condition (c) with $\Serre(\Tca)=\Tca[k]$.
\end{remark}

This paper is about varieties with an isolated nodal singularity. By the local nature of such singularities, it seems unnatural to focus just on projective varieties; we prefer instead to work with quasiprojective varieties. The smooth varieties arising from resolution of singularities will again be quasiprojective; in particular, their derived category will not have a Serre functor, but they will admit the following weaker version.

\begin{definition}[Serre functor for a pair $(\cR, \cT)$]{\cite[Section~6.4]{VdBNccr}}\label{def:Serre functor pair}
Let $\cT$ be a triangulated category. Let $\cR \subset \cT$ be a full triangulated subcategory such that for any $\Fca \in \cR$ the functors $\Hom^\bullet(\Fca, -)$ and $\Hom^\bullet(-, \Fca)$ on $\cT$ take values in the category of finite-dimensional graded vector spaces. An equivalence $\Serre \colon \cT\rightarrow\cT$ is called \emph{Serre functor for the pair $(\cR, \cT)$} if
\begin{enumerate}
\item $\Serre$ leaves $\cR$ stable and
\item for any two objects $\Fca \in \cR, \Gca\in\cT$ there is a bifunctorial isomorphism
\[
  \Hom(\Fca,\Gca)=\Hom(\Gca,\Serre(\Fca))^{\vee}.
\]
\end{enumerate} 
In particular, the restriction of $\Serre$ to $\cR$ is a Serre functor for $\cR$.
\end{definition}

\begin{example}\label{ex:serre quasiproj}
Let $X$ be a smooth quasiprojective variety of dimension $n$. Let $j \colon E \to X$ be the embedding of a smooth projective divisor; denote by $\omega_j \coloneqq \omega_E \otimes j^* \omega_X^\vee$ its relative dualizing bundle. Define the category $\Db_E(X)$ as the full subcategory of $\Db(X)$ consisting of complexes topologically supported on $E$. As a triangulated category, $\Db_E(X)$ is generated by $j_*\Db(E)$, a remark that is very useful in practice. For any $\Fca \in \Db_E(X)$, the functors $\Hom^\bullet(\Fca, -)$ and $\Hom^\bullet(-, \Fca)$ take values in the category of finite-dimensional graded vector spaces: indeed, this holds true for an object of the form $j_*\Fca,\Fca \in \Db(E)$, because $\Hom ^\bullet (-, j_*\Fca) = \Hom^\bullet  (j^*(-),\Fca)$ and $\Hom^\bullet  (j_*\Fca,-) = \Hom^\bullet  (\Fca, j^*(-) \otimes \omega_j[-1])$. We claim that $\mathbb{T}_{\Canonical_X} \circ [n]$ is a Serre functor for the pair $(\Db_E(X),\Db(X))$. Condition (a) in \cref{def:Serre functor pair} is clearly satisfied; as for condition (b), for any $\Fca \in \Db(E)$ and $\Gca \in \Db(X)$, by Grothendieck-Verdier duality we have
\begin{align*}
\Hom_X(j_*\Fca, \Gca)
=& \Hom_E(\Fca, j^*\Gca \otimes \omega_j[-1]) \\
=& \Hom_E(\Fca, j^*\Gca \otimes \omega_E \otimes j^* \omega_X^\vee[-1]) \\
=& \Hom_E(j^*(\Gca \otimes \omega_X^\vee)[-n], \Fca)^\vee \\
=& \Hom_X(\Gca, j_*\Fca \otimes \omega_X[n])^\vee.
\end{align*}
\end{example}
The following result is analogous to \cref{lemma:serre1}, and can be proven in the same way.
\begin{lemma}\label{lemma:Serre pair}
Let $\cT$ be a triangulated category, and $\cR \subset \cT$ a full triangulated subcategory. Suppose that we have a full triangulated subcategory $\cA$ of $\cR$ that is admissible in both $\cR$ and $\cT$; in particular, we have semiorthogonal decompositions $\cR=\langle\cA,\cB\rangle$ and $\cT=\langle\cA,\cC\rangle$. Assume that the pair $(\cR, \cT)$ has a Serre functor $\Serre_{\cR, \cT}$. Then the pair $(\cB, \cC)$ has a Serre functor, which is given by
\[
 \Serre_{\cB, \cC} = \bR_{\cA}\circ \Serre_{\cR,\cT}.
\]
\end{lemma}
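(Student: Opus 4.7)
The plan is to verify the defining properties of \cref{def:Serre functor pair} for the proposed formula $\Serre_{\cB,\cC} = \bR_\cA \circ \Serre_{\cR,\cT}$, mirroring the strategy of \cref{lemma:serre1}. I would first dispense with the routine checks: $\Serre_{\cB,\cC}$ takes values in $\cC$ because $\bR_\cA$ lands in ${}^{\perp}\cA$, and it stabilizes $\cB$ because $\Serre_{\cR,\cT}$ preserves $\cR$ by hypothesis, so $\bR_\cA \Serre_{\cR,\cT}(\cB) \subset {}^{\perp}\cA \cap \cR = \cB$.

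For the Serre duality formula, given $\Fca \in \cB \subset \cR$ and $\Gca \in \cC$, I would combine the pair Serre duality
\[
  \Hom(\Fca, \Gca) = \Hom(\Gca, \Serre_{\cR,\cT}(\Fca))^{\vee}
\]
with the mutation triangle $\bR_\cA \Serre_{\cR,\cT}(\Fca) \to \Serre_{\cR,\cT}(\Fca) \to \alpha\alpha^*\Serre_{\cR,\cT}(\Fca)$ whose last term lies in $\cA$. Applying $\Hom(\Gca,-)$ and invoking $\Hom(\Gca,\cA) = 0$ (since $\Gca \in \cC = {}^{\perp}\cA$) yields $\Hom(\Gca, \Serre_{\cR,\cT}(\Fca)) = \Hom(\Gca, \Serre_{\cB,\cC}(\Fca))$, as needed.

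The main obstacle is to show that $\Serre_{\cB,\cC}|_\cC \colon \cC \to \cC$ is an equivalence. The strategy is to establish the SOD $\cT = \langle \Serre_{\cR,\cT}(\cC), \cA\rangle$, the pair analog of \cref{lemma:serre1}(a). For semi-orthogonality, I would verify $\Hom(\cA, \Serre_{\cR,\cT}(\cC)) = 0$ by applying the pair Serre duality to $\Serre_{\cR,\cT}^{-1}(\alpha) \in \cR$ (the inverse $\Serre_{\cR,\cT}^{-1}$ stabilizes $\cR$ since its restriction to $\cR$ is the inverse of the Serre functor of $\cR$) and invoking $\Hom(\cC, \cA) = 0$. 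Admissibility then forces $\Serre_{\cR,\cT}(\cC) = \cA^{\perp}$, so the two SODs $\cT = \langle \Serre_{\cR,\cT}(\cC), \cA\rangle$ and $\cT = \langle \cA, \cC\rangle$ are related by a right mutation through $\cA$, which provides an equivalence $\bR_\cA \colon \Serre_{\cR,\cT}(\cC) \xrightarrow{\sim} \cC$ between their complementary components. Precomposing with the equivalence $\Serre_{\cR,\cT} \colon \cC \to \Serre_{\cR,\cT}(\cC)$ exhibits $\Serre_{\cB,\cC}|_\cC$ as a composition of equivalences.
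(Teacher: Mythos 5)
Your proposal follows the same route that the paper intends (the paper simply remarks that the lemma "can be proven in the same way" as \cref{lemma:serre1}, which is the standard Bondal–Kuznetsov argument you are reproducing): establish the pair analogue of \cref{lemma:serre1}(a), identify $\bR_\cA$ as the equivalence between the complementary components of the two decompositions, and deduce the duality formula by applying $\Hom(\Gca,-)$ to the mutation triangle. The routine checks (stability of $\cC$ and of $\cB$, bifunctoriality) and the pair Serre duality computation are all correct.

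One step is stated too quickly: after checking $\Hom(\cA,\Serre_{\cR,\cT}(\cC))=0$, you assert that "admissibility forces $\Serre_{\cR,\cT}(\cC)=\cA^\perp$". Semiorthogonality only gives the inclusion $\Serre_{\cR,\cT}(\cC)\subset\cA^\perp$, and admissibility of $\Serre_{\cR,\cT}(\cC)$ alone does not upgrade this to an equality (an admissible proper subcategory of $\cA^\perp$ is perfectly possible a priori, and the problem is that $\Serre_{\cR,\cT}$ need not stabilize $\cA$, so one cannot simply transport the SOD $\cT=\langle\cA,\cC\rangle$ by $\Serre_{\cR,\cT}$ to conclude). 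The fix is cheap and symmetric to the inclusion you already proved: given $x\in\cA^\perp$ and $a\in\cA$, apply the pair duality to $\Serre_{\cR,\cT}^{-1}(a)\in\cR$ and $\Gca=\Serre_{\cR,\cT}^{-1}(x)$ to obtain
\[
\Hom\bigl(\Serre_{\cR,\cT}^{-1}(a),\Serre_{\cR,\cT}^{-1}(x)\bigr)=\Hom\bigl(\Serre_{\cR,\cT}^{-1}(x),a\bigr)^{\vee},
\]
where the left-hand side equals $\Hom(a,x)=0$; this shows $\Serre_{\cR,\cT}^{-1}(x)\in{}^\perp\cA=\cC$, hence $\cA^\perp\subset\Serre_{\cR,\cT}(\cC)$. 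With that amendment the argument is complete, and the remaining step — that $\bR_\cA\colon\cA^\perp\to{}^\perp\cA$ is an equivalence with inverse $\bL_\cA$ — is the standard fact you invoke.
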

\begin{remark} Assume that an object $\Tca \in \cT$ belongs to a full triangulated subcategory $\cR \subset \cT$ such that the pair $(\cR,\cT)$ has a Serre functor $\Serre$. To check that $\Tca$ is $k$-spherical, condition (c) in \cref{def:spherical with Serre} can be replaced (again by the Yoneda lemma) with $\Serre(\Tca)=\Tca[k]$.
\end{remark}

\subsection{Categorical resolutions}

We recall the material from \cite[§3]{Kldcr}.

\begin{definition}[Geometric category]
A triangulated category $\cD$ is \emph{geometric} if it is equivalent to an admissible subcategory of $\Db(X)$, where $X$ is a smooth variety.
\end{definition}

\begin{definition}\cite[Definition~1.6]{Orl:06}\label{def:hom-finite}
Let $\cD$ be a triangulated category.
An object $\Fca\in\cD$ is \emph{homologically finite} if for any $\Gca\in \cD$ there exists only a finite number of $n\in\mathbb{Z}$ such that $\Hom_{\cD}{(\Fca,\Gca[n])}\neq 0$.
The category $\cD^{\perf}$ is defined as the full subcategory of $\cD$ whose objects are the homologically finite objects.
\end{definition}

\begin{remark}
The notation $\cD^{\perf}$ is justified since the homologically finite objects in the bounded derived category of coherent sheaves on a quasiprojective variety $X$ are nothing else than the perfect complexes, \ie $\Db(X)^{\perf}=\Dperf(X)$, \cf\cite[Proposition~1.11]{Orl:06}.
\end{remark}

\begin{definition}[Categorical resolution] \label{def:cr}
A \emph{categorical resolution} of a triangulated category $\cD$ is a geometric triangulated category $\widetilde{\cD}$ and a pair of functors 
\begin{equation*}
    \sigma_*\colon  \widetilde{\cD}\rightarrow \cD,\quad\quad \sigma^*\colon \cD^{\perf}\rightarrow \widetilde{\cD},
\end{equation*}
such that $\sigma^*$ is left adjoint to $\sigma_*$ on $\cD^{\perf}$, \ie
\begin{equation*}
    \Hom_{\widetilde{\cD}}{(\sigma^*\Fca, \Gca)} = \Hom_{\cD}{(\Fca, \sigma_*\Gca)} \quad\quad \text{for any} \quad \Fca \in \cD^{\perf},\ \Gca \in \widetilde{\cD},
\end{equation*}
and the natural morphism of functors $\id_{\cD^{\perf}}\rightarrow \sigma_*\sigma^*$ is an isomorphism.

A categorical resolution $(\widetilde{\cD}, \sigma_*,\sigma^*)$ is \emph{weakly crepant}  if $\sigma^*$ is also right adjoint to $\sigma_*$ on $\cD^{\perf}$, \ie
\begin{equation*}
    \Hom_{\widetilde{\cD}}{(\Gca, \sigma^*\Fca)} = \Hom_{\cD}{(\sigma_*\Gca, \Fca)} \quad\quad \text{for any} \quad \Fca \in \cD^{\perf},\ \Gca \in \widetilde{\cD}.
\end{equation*}
\end{definition}

We now focus on a particular construction of a (weakly crepant) categorical resolution starting from a classical resolution of singularities. Consider a resolution of rational singularities $\sigma\colon \tY\rightarrow Y$ whose exceptional locus $E$ is an irreducible divisor. Let $Z=\sigma(E)$ and $\rho\colon E\rightarrow Z$ be the restriction of $\sigma$ to $E$. Denote by $j\colon E \rightarrow \tY$ the inclusion morphism. Let 
\begin{equation}\label{eq:dual Lef}
    \Db(E)=\langle \cB_{m-1}(1-m),\dots, \cB_1(-1),\cB_0 \rangle
\end{equation}
be a dual Lefschetz decomposition with respect to $\cO_{E}(1) \coloneqq \cN^{\vee}_{E/\widetilde{Y}}$. Define  $\widetilde{\cD}$ as the subcategory
\begin{equation*}
    \widetilde{\cD}\coloneqq\{\Fca \in \Db(\widetilde{Y})\mid j^*\Fca \in \cB_0\}.
\end{equation*}

\begin{proposition}\label{prop:D-tilde-SOD}{\cite[Proposition~4.1]{Kldcr}} Consider the notation fixed in \eqref{eq:dual Lef}. The pushforward functor $j_*$ is fully faithful on $\cB_i(-i)$ for $1 \le i \le m-1$ and we have a semiorthogonal decomposition
\begin{equation*}
    \Db(\widetilde{Y})=\langle j_* \cB_{m-1}(1-m), \dots, j_* \cB_{1}(-1), \widetilde{\cD} \rangle.
\end{equation*}
\end{proposition}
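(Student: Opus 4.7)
The plan is to establish three ingredients for the claimed semiorthogonal decomposition: (i) the functor $j_*$ is fully faithful on each $\cB_i(-i)$ for $i \ge 1$, (ii) the proposed collection is semiorthogonal in the stated order, and (iii) together these components generate $\Db(\tY)$. The workhorse for (i) and most of (ii) is the distinguished triangle
\[
\cF(1)[1]\longrightarrow j^*j_*\cF\longrightarrow \cF
\]
for $\cF\in\Db(E)$, which arises from the two-term Koszul resolution $[\cO_{\tY}(-E)\to \cO_{\tY}]\simeq j_*\cO_E$ together with the identification $j^*\cO_{\tY}(-E)=\cN^{\vee}_{E/\tY}=\cO_E(1)$.

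For (i), given $\cF,\cG\in\cB_i(-i)$ with $i\ge 1$, I would use the adjunction $j^*\dashv j_*$ to rewrite $\Hom^{\bullet}_{\tY}(j_*\cF,j_*\cG)=\Hom^{\bullet}_E(j^*j_*\cF,\cG)$ and apply $\Hom^{\bullet}_E(-,\cG)$ to the triangle above. The question reduces to showing $\Hom^{\bullet}_E(\cF(1),\cG)=0$; since $\cB_i\subset\cB_{i-1}$, the twist $\cF(1)$ lies in $\cB_{i-1}(-(i-1))$, which sits strictly to the right of $\cB_i(-i)\ni\cG$ in the dual Lefschetz decomposition of $\Db(E)$, so the vanishing is immediate from its semiorthogonality. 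Exactly the same computation, applied now to $\cF\in\cB_i(-i)$ and $\cG\in\cB_j(-j)$ with $i<j$, settles the semiorthogonality among the $j_*\cB_i(-i)$. The remaining orthogonality $\Hom^{\bullet}_{\tY}(\widetilde{\cD},j_*\cB_i(-i))=0$ for $i\ge 1$ is simpler: by $j^*\dashv j_*$ it equals $\Hom^{\bullet}_E(j^*\widetilde{\cD},\cB_i(-i))$, and by definition of $\widetilde{\cD}$ we have $j^*\widetilde{\cD}\subset\cB_0$, which is the rightmost component of the dual Lefschetz SOD and hence orthogonal to every $\cB_i(-i)$ with $i\ge 1$.

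The step I expect to be the main obstacle is generation (iii). My plan is to first show that each $j_*\cB_i(-i)$ is admissible in $\Db(\tY)$: the inclusion factors as $\cB_i(-i)\hookrightarrow\Db(E)\xrightarrow{j_*}\Db(\tY)$, a composition of an admissible embedding (since $\cB_i(-i)$ is a component of an SOD of $\Db(E)$) with $j_*$, whose left and right adjoints are $j^*$ and $j^!$; together with the fullness and faithfulness from (i), this yields both adjoints for the composition. The standard fact that a semiorthogonal collection of admissible subcategories spans an admissible subcategory then ensures $\cC:=\langle j_*\cB_{m-1}(1-m),\dots,j_*\cB_1(-1)\rangle$ is admissible in $\Db(\tY)$, producing an SOD $\Db(\tY)=\langle\cC,{}^{\perp}\cC\rangle$. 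The proof concludes by identifying ${}^{\perp}\cC$ with $\widetilde{\cD}$: the inclusion $\widetilde{\cD}\subset{}^{\perp}\cC$ is precisely the semiorthogonality already established; conversely, $\Fca\in{}^{\perp}\cC$ forces $\Hom^{\bullet}_E(j^*\Fca,\cB_i(-i))=0$ for every $i\ge 1$, so $j^*\Fca\in{}^{\perp}\langle \cB_{m-1}(1-m),\dots,\cB_1(-1)\rangle=\cB_0$, using once more the dual Lefschetz SOD, whence $\Fca\in\widetilde{\cD}$.
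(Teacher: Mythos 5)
Your argument is correct and is essentially the proof given in Kuznetsov's original paper \cite[Proposition~4.1]{Kldcr}, which the present paper cites without reproving: it rests on the adjunction $j^*\dashv j_*$ together with the exact triangle $\cF(1)[1]\to j^*j_*\cF\to\cF$ coming from the Koszul resolution of $j_*\cO_E$, using the inclusions $\cB_i\subset\cB_{i-1}$ and the semiorthogonality of the dual Lefschetz pieces to kill the relevant $\Hom$ spaces, and then deduces generation from admissibility of $\langle j_*\cB_{m-1}(1-m),\dots,j_*\cB_1(-1)\rangle$ and the identification of its left orthogonal with $\widetilde{\cD}$.
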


\begin{theorem}\label{thm:ldcr}{\cite[Theorem~4.4, Proposition~4.5]{Kldcr}}
Consider the notation fixed in \eqref{eq:dual Lef}. Suppose that $\cB_0\subset \Db(E)$ contains $\rho^*(\Dperf(Z))$. Then the functor $\sigma^*$ factors through $\widetilde{\cD}$ and $(\widetilde{\cD}, \sigma_*, \sigma^*)$ is a categorical resolution of $\Db(Y)$ where 
\begin{equation*}
    \sigma_*\colon \widetilde{\cD}\rightarrow \Db(Y), \quad\quad \sigma^*\colon \Dperf(Y) \rightarrow \widetilde{\cD}.
\end{equation*}
If in addition $Y$ is Gorenstein, and $\Canonical_{\widetilde{Y}}=\sigma^* \Canonical_Y\otimes\cO((m-1)E)$, and $\rho^*(\Dperf(Z))\subset \cB_{m-1}$, then the categorical resolution $(\widetilde{\cD}, \sigma_*, \sigma^*)$ is weakly crepant.
\end{theorem}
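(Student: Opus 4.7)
The plan is to break the statement into four claims: that $\sigma^*$ lands in $\widetilde{\cD}$, that adjunction passes to the subcategories, that the unit is an isomorphism, and finally the weak crepancy. I will use two standard inputs throughout: the base change identity $j^*\sigma^* = \rho^*i^*$, where $i\colon Z\hookrightarrow Y$ is the inclusion, and Grothendieck--Verdier duality for the proper birational morphism $\sigma$.

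First I would check $\sigma^*(\Dperf(Y))\subset\widetilde{\cD}$. For $\Fca\in\Dperf(Y)$, perfectness is preserved by $i^*$, so $i^*\Fca\in\Dperf(Z)$, and then $j^*\sigma^*\Fca=\rho^*i^*\Fca\in\rho^*\Dperf(Z)\subset\cB_0$ by hypothesis. The adjunction $\sigma^*\dashv\sigma_*$ restricts automatically to give the first adjunction in \cref{def:cr}. For the unit $\id_{\Dperf(Y)}\to\sigma_*\sigma^*$ I would invoke the projection formula $\sigma_*\sigma^*\Fca=\Fca\otimes\sigma_*\cO_{\tY}$ and rationality of the singularities of $Y$, which gives $\sigma_*\cO_{\tY}=\cO_Y$.

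For weak crepancy, the right adjoint of $\sigma_*\colon\Db(\tY)\to\Db(Y)$ is given by twisted pullback: under the Gorenstein and canonical-class hypotheses, Grothendieck--Verdier duality yields $\sigma^!\Fca=\sigma^*\Fca\otimes\cO_{\tY}((m-1)E)$. By \cref{prop:D-tilde-SOD}, the right adjoint of the restriction $\sigma_*|_{\widetilde{\cD}}$ is obtained by composing $\sigma^!$ with the projection onto~$\widetilde{\cD}$, which is the left mutation through $\langle j_*\cB_{m-1}(1-m),\dots,j_*\cB_1(-1)\rangle$. Thus it suffices to show that the cone $C$ of the canonical map $\sigma^*\Fca\to\sigma^!\Fca$ lies in this left component.

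The key computation is a filtration of $C$: from the short exact sequences $0\to\cO_{\tY}((k-1)E)\to\cO_{\tY}(kE)\to j_*\cO_E(-k)\to 0$ (using $\cO_E(E)=\cO_E(-1)$) and the projection formula, tensoring with $\sigma^*\Fca$ exhibits $C$ as an iterated extension of the objects
\[
  \sigma^*\Fca\otimes j_*\cO_E(-k)=j_*\bigl(\rho^*i^*\Fca\otimes\cO_E(-k)\bigr), \qquad k=1,\dots,m-1.
\]
The hypothesis $\rho^*\Dperf(Z)\subset\cB_{m-1}$ places $\rho^*i^*\Fca$ in $\cB_{m-1}$, and the Lefschetz nesting $\cB_{m-1}\subset\cB_k$ for all $k\le m-1$ then puts the $k$-th piece into $j_*\cB_k(-k)$. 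Hence $C\in\langle j_*\cB_{m-1}(1-m),\dots,j_*\cB_1(-1)\rangle$, so its left mutation into $\widetilde{\cD}$ vanishes and the right adjoint of $\sigma_*|_{\widetilde{\cD}}$ coincides with $\sigma^*$. The main obstacle is this last step: one needs the Lefschetz nesting and the strong placement $\rho^*\Dperf(Z)\subset\cB_{m-1}$ (not merely $\subset\cB_0$) to ensure that \emph{every} graded piece lands in the left orthogonal, which is exactly why weak crepancy requires a substantially stronger hypothesis than the existence of a categorical resolution.
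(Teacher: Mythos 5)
Your argument follows the same route as Kuznetsov's proof of \cite[Theorem~4.4, Proposition~4.5]{Kldcr}, which the paper cites without reproducing: the first part reduces to the compatibility $j^*\sigma^*=\rho^*\iota^*$ (with $\iota\colon Z\hookrightarrow Y$) together with preservation of perfectness under $\iota^*$, and the unit being an isomorphism follows from the projection formula and rationality; the weak-crepancy part proceeds exactly by filtering $\cone(\cO_{\tY}\to\cO_{\tY}((m-1)E))$ by the sheaves $j_*\cO_E(-k)$, tensoring with $\sigma^*\Fca$, and using the strong hypothesis $\rho^*\Dperf(Z)\subset\cB_{m-1}$ with the Lefschetz nesting $\cB_{m-1}\subset\cB_k$ to place $\cone(\sigma^*\Fca\to\sigma^!\Fca)$ inside $\langle j_*\cB_{m-1}(1-m),\dots,j_*\cB_1(-1)\rangle=\widetilde{\cD}^\perp$. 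One labelling slip worth flagging: since $\widetilde{\cD}$ is the \emph{last} component in the decomposition of \cref{prop:D-tilde-SOD}, the projection $\Db(\tY)\to\widetilde{\cD}$ that computes the right adjoint of the inclusion is the \emph{right} mutation $\mathbb{R}_{\langle j_*\cB_{m-1}(1-m),\dots,j_*\cB_1(-1)\rangle}$, not the left mutation. This does not affect the substance: once the cone lands in $\widetilde{\cD}^\perp$, applying $\Hom(\Gca,-)$ for $\Gca\in\widetilde{\cD}$ to the triangle $\sigma^*\Fca\to\sigma^!\Fca\to C$ gives the desired identification of the right adjoint with $\sigma^*$.
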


\subsection{Localization functors and their kernels}

In this section we review results from \cite{KLcr, Ehf} which will allow us to compute the kernels of certain categorical resolutions.

\begin{definition}\label{def:localiz-functor}
Let $\cT$ and $\cT'$ be triangulated categories.
\begin{enumerate}
    \item An exact functor $F\colon \cT\rightarrow \cT'$ is a \emph{localization} if the induced functor $\overline{F}\colon \cT/\ker(F)\rightarrow \cT'$ is an equivalence. 
    \item An exact functor $F\colon \cT\rightarrow \cT'$ is a \emph{localization up to direct summands} if $F\colon\cT\to\im(F)$ is a localization onto a dense subcategory of $\cT'$, in symbols $\im(F)^\oplus=\cT'$.\footnotemark
\end{enumerate}
\footnotetext{The terminology ``categorical contraction'' is preferred for this notion in \cite[Definition~1.10]{KuzSh}.}
\end{definition}

\begin{definition}[Nonrational locus]{\cite[Definition~6.1]{KLcr}}
Let $\sigma\colon X\rightarrow Y$ be a proper birational morphism. A closed subscheme $Z\subset Y$ is called a \emph{nonrational locus} of $Y$ with respect to $\sigma$ if the natural morphism
\[
  \Ideal_Z\rightarrow \sigma_*\Ideal_{\sigma^{-1}(Z)}
\]
is an isomorphism in $\Db(Y)$.
Here $\Ideal_Z\subset\cO_Y$ denotes the ideal sheaf of $Z\subset Y$, and $\sigma^{-1}(Z)$ is the scheme-theoretic pre-image of $Z$, so that $\Ideal_{\sigma^{-1}(Z)}=\sigma^{-1}\Ideal_Z\cdot\cO_X$. 
\end{definition}

\begin{theorem}{\cite[Theorem~8.22]{Ehf}} \label{thm:efimov}
Let $\sigma\colon X\rightarrow Y$ be a proper morphism such that $\sigma_*\cO_X=\cO_Y$. Assume that there is a subscheme $Z\subset Y$, such that all its infinitesimal neighborhoods $Z_k$, for $k\geq 1$, are nonrational loci of $Y$ with respect to $\sigma$. Consider the cartesian diagram
\[
\begin{tikzcd}
E \ar[d, "\rho"] \ar[r, "j"] & X \ar[d, "\sigma"'] \\
Z \ar[r, ""] & Y.
\end{tikzcd}
\]
Assume that the functor $\rho_*\colon \Db(E)\rightarrow \Db(Z)$ is a localization up to direct summands. If $\sigma$ is an isomorphism outside $Z$, then $\sigma_*\colon \Db(X)\rightarrow \Db(Y)$ is a localization up to direct summands with kernel classically generated by $j_*( \ker(\rho_*))$.
\end{theorem}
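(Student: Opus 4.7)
The plan is to establish three statements in turn: (i) the inclusion $\langle j_*\ker(\rho_*)\rangle^{\oplus} \subseteq \ker(\sigma_*)$; (ii) the reverse inclusion, identifying the kernel; and (iii) density of $\im(\sigma_*)$ up to direct summands in $\Db(Y)$, together with fullness of the induced functor on the Verdier quotient $\Db(X)/\ker(\sigma_*)$. Together these yield the theorem by the definition of a localization up to direct summands.

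Part (i) is formal. I would apply proper base change to the given cartesian square to obtain, for any $G \in \Db(E)$, the isomorphism $\sigma_* j_* G \cong i_* \rho_* G$, where $i\colon Z \to Y$ denotes the closed embedding. If $G \in \ker(\rho_*)$, then $\sigma_* j_* G = 0$; since $\ker(\sigma_*)$ is a thick triangulated subcategory closed under direct summands, it contains $\langle j_*\ker(\rho_*)\rangle^{\oplus}$.

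For (ii), I would fix $F \in \ker(\sigma_*)$ and first observe that $F$ must be supported on $E$: since $\sigma\colon X \setminus E \to Y \setminus Z$ is an isomorphism and $(\sigma_*F)|_{Y \setminus Z} = 0$, the restriction $F|_{X \setminus E}$ vanishes, so $F \in \Db_E(X)$. This subcategory is classically generated by $j_*\Db(E)$ via filtration by powers of the ideal sheaf of $E$ in $X$. To refine the generation to $j_*\ker(\rho_*)$, I would use the hypothesis that each $Z_k$ is a nonrational locus: combined with $\sigma_*\cO_X = \cO_Y$ it yields $\sigma_*\cO_{\sigma^{-1}(Z_k)} \cong \cO_{Z_k}$, which via the projection formula transports the vanishing $\sigma_*F = 0$ into layerwise vanishings along the infinitesimal thickenings. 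The localization-up-to-summands property of $\rho_*$ then pulls these back to produce classical generators of $F$ of the form $j_* G_\alpha$ with $G_\alpha \in \ker(\rho_*)$. For (iii), given $G \in \Db(Y)$, I would construct a preimage under $\sigma_*$ up to direct summands by gluing: on $Y \setminus Z$ the isomorphism $X \setminus E \cong Y \setminus Z$ handles the lift, while near $Z$ one lifts level by level along the $Z_k$ using the nonrational locus hypothesis, and assembles these lifts into a compatible global object on $X$ using the localization property of $\rho_*$. Fullness on the Verdier quotient follows by the same gluing argument applied to Hom-complexes.

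The main obstacle I expect is the control of the gluing across the infinitesimal thickenings in (ii) and (iii): compatible data on each $Z_k$ need not assemble into a bounded object on $X$ without a coherent-completion or Mittag--Leffler-type input. The nonrational locus hypothesis is precisely what makes the relevant derived inverse limits collapse, but orchestrating this with the direct-summand subtleties arising from $\rho_*$ being only a localization up to summands (rather than a strict localization) is the delicate technical point, and is exactly what propagates to give the corresponding weaker ``up to summands'' conclusion for $\sigma_*$.
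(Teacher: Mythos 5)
The paper cites this theorem from \cite[Theorem~8.22]{Ehf} and does not supply its own proof, so there is no in-paper argument to compare against; I assess your proposal on its own terms.

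Part (i) is correct in substance, though the identity $\sigma_* j_* \cong i_*\rho_*$ is pure functoriality of pushforward applied to $\sigma\circ j = i\circ\rho$; no base change and no cartesianness of the square is used in that direction.

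The gap you flag at the end is real, and it is not a peripheral technicality --- it is the entire content of the theorem. In (ii), filtering $F\in\ker(\sigma_*)\subseteq\Db_E(X)$ by powers of $\Ideal_E$ exhibits $F$ as an iterated extension of objects $j_*G_\alpha$ with $G_\alpha\in\Db(E)$, but the vanishing $\sigma_*F=0$ does not descend to $\rho_*G_\alpha=0$ for each layer; it only kills the abutment of the resulting spectral sequence, and there is no way to conclude that the individual associated graded pieces lie in $\ker(\rho_*)$. Producing classical generators inside $j_*\ker(\rho_*)$, rather than merely in $j_*\Db(E)$, needs a fundamentally different input. In (iii), ``lifting level by level along the $Z_k$ and assembling'' is a derived inverse-limit problem, and managing it together with the direct-summand subtleties is precisely what Efimov's argument does: he proves that the commutative square of pushforward functors $\rho_*, j_*, i_*, \sigma_*$ realizes $\Db(Y)$ as a homotopy pushout of DG categories, via a gluing theorem that uses the nonrational-locus hypothesis to make a tower of DG categories indexed by the thickenings $Z_k$ converge. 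That categorical gluing theorem, not a Mittag--Leffler or coherent-completion argument, is the missing ingredient. Finally, fullness and faithfulness of the induced functor $\Db(X)/\ker(\sigma_*)\to\Db(Y)$ are separate assertions which your sketch does not address beyond a single sentence.

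In short, your three-step decomposition is the right skeleton, and your suspicion about the difficulty is accurate, but the two nontrivial steps remain at the level of intentions: the filtration argument in (ii) does not work as stated, and the DG-categorical pushout machinery that makes (ii) and (iii) go through is absent.
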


We verify the hypotheses of \cref{thm:efimov} for blow-ups of certain affine cones. The following corollary is remarked in passing after \cite[Theorem~1.10]{Ehf}; we provide a proof for the sake of completeness.

\begin{corollary}\label{Cor:ker}
Let $Y\subset \AA^{n+1}$ be the cone over a projectively normal smooth Fano variety $W \subset \PP^n$. Let $Z=\{0\}$ be the singular point of $Y$. Let $\sigma\colon \tY\rightarrow Y$ be the blow-up at the singular point $Z$ and $E=W$ its exceptional divisor. Then, $\sigma_*\colon \Db(\tY)\rightarrow \Db(Y)$ is a localization up to direct summands with kernel classically generated by $j_*(\langle \cO_E \rangle^\perp)$, where the orthogonal $\langle \cO_E \rangle^\perp$ is taken in $\Db(E)$.
\end{corollary}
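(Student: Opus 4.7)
The strategy is a verbatim verification of the hypotheses of \cref{thm:efimov}, applied to $\sigma\colon\tY\to Y$ with $Z=\{0\}$, $E=W$, and the inclusions $j,\rho$ as specified. That $\sigma$ is an isomorphism outside $Z$ is built into the blow-up. So the work is to check (i) $\sigma_*\cO_{\tY}=\cO_Y$, (ii) every infinitesimal neighborhood $Z_k$ is a nonrational locus for $\sigma$, and (iii) $\rho_*\colon\Db(E)\to\Db(Z)$ is a localization up to direct summands with kernel $\langle\cO_E\rangle^\perp$; once this is done, \cref{thm:efimov} hands us the conclusion with kernel classically generated by $j_*\ker(\rho_*)=j_*(\langle\cO_E\rangle^\perp)$.

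The geometric input I would exploit is the explicit identification $\tY=\Spec_W\bigl(\bigoplus_{\ell\ge 0}\cO_W(\ell)\bigr)$, \ie the total space of $\cO_W(-1)$, with projection $\pi\colon\tY\to W$ and zero section $E$. Under this, the Cartier divisor $E\subset\tY$ satisfies $\cO_{\tY}(-kE)=\pi^*\cO_W(k)$ for $k\ge 0$, and the scheme-theoretic preimage $\sigma^{-1}(Z_k)$ is cut out by $\mathfrak m^{k+1}\cO_{\tY}=\cO_{\tY}(-(k+1)E)$, where $\mathfrak m=\bigoplus_{\ell\ge 1}H^0(W,\cO_W(\ell))$ is the irrelevant ideal. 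Combining the projection formula with $\pi_*\cO_{\tY}=\bigoplus_{\ell\ge 0}\cO_W(\ell)$, I get for every $k\ge 0$
\[
R^i\sigma_*\cO_{\tY}(-(k+1)E)=\bigoplus_{\ell\ge k+1}H^i\bigl(W,\cO_W(\ell)\bigr).
\]
For $i=0$ projective normality of $W\subset\PP^n$ identifies the right-hand side with $\mathfrak m^{k+1}=\Ideal_{Z_k}$, which is exactly the nonrational locus condition. For $i>0$ Kodaira vanishing on the Fano variety $W$ (applicable since $\cO_W(\ell)$ is ample for $\ell\ge 1$) kills all summands, so no derived contribution survives. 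The same computation with $k=-1$ gives $\sigma_*\cO_{\tY}=\cO_Y$, once one also uses $H^i(W,\cO_W)=0$ for $i>0$ (again Kodaira, via $\cO_W=\omega_W\otimes\omega_W^{-1}$ and $\omega_W^{-1}$ ample).

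For hypothesis (iii), since $W$ is Fano we have $H^{\ge 1}(W,\cO_W)=0$, so $\cO_W=\cO_E$ is exceptional in $\Db(E)$ and produces a semiorthogonal decomposition $\Db(E)=\langle\langle\cO_E\rangle^\perp,\langle\cO_E\rangle\rangle$. Because $\rho_*=R\Gamma=\Hom^\bullet(\cO_E,-)$, the left component is precisely $\ker(\rho_*)$, and the restriction $\rho_*|_{\langle\cO_E\rangle}\colon\langle\cO_E\rangle\isomto\Db(\mathrm{pt})$ is an equivalence sending $\cO_E\mapsto\bC$. In particular $\rho_*$ is already a localization (even without passing to direct summands) with kernel $\langle\cO_E\rangle^\perp$.

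I do not foresee a serious obstacle; the only point requiring care is the scheme-theoretic identification $\Ideal_{\sigma^{-1}(Z_k)}=\cO_{\tY}(-(k+1)E)$, which relies on the fact that $\mathfrak m\cO_{\tY}$ is an invertible ideal (this is exactly what blowing up achieves) together with projective normality to match $\mathfrak m^{k+1}$ with $\bigoplus_{\ell\ge k+1}H^0(W,\cO_W(\ell))$ as graded $R$-modules. Once this is in place, the verification of \cref{thm:efimov} is mechanical and the corollary follows.
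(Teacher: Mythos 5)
Your proposal is correct and uses the same overall strategy as the paper (verify the hypotheses of \cref{thm:efimov}), but the key technical step is handled by a genuinely different and cleaner computation. Where the paper establishes $\sigma_*\cO_{\tY}=\cO_Y$ by citing Koll\'ar's result that cones over Fanos have rational singularities, and verifies the nonrational-locus condition via the $\Proj$ construction of the blow-up together with an inductive diagram chase on the exact sequences \eqref{eq:In}--\eqref{eq:diagramnew} and a surjectivity-plus-dimension-count argument at the stalk of the vertex, you instead exploit the identification $\tY=\operatorname{Spec}_W\bigl(\bigoplus_{\ell\ge 0}\cO_W(\ell)\bigr)$, \ie $\tY=\operatorname{Tot}(\cO_W(-1))$ with affine projection $\pi\colon\tY\to W$. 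This lets you read off $R^i\sigma_*\cO_{\tY}(-jE)=\bigoplus_{\ell\ge j}H^i(W,\cO_W(\ell))$ in one stroke from the projection formula and the collapse of the Leray spectral sequence for the affine map $\pi$; projective normality identifies the $i=0$ part with $\mathfrak m^j$ (using that $R$ is generated in degree $1$), and Kodaira vanishing kills all $i>0$ terms, including the $j=0$ case which gives $\sigma_*\cO_{\tY}=\cO_Y$ directly. The treatment of $\rho_*$ is essentially equivalent to the paper's, phrased through the semiorthogonal decomposition $\Db(E)=\langle\langle\cO_E\rangle^\perp,\langle\cO_E\rangle\rangle$ rather than through the criterion of \cite[Remark~3.3]{Ehf}. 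What your route buys is a more transparent, closed-form computation that avoids induction and the stalk-level analysis; what the paper's route buys is a formulation that more visibly generalizes to blow-ups of centers that are not vertices of cones, since it never leans on the explicit line-bundle total-space model. One small point worth spelling out if you write this up: the nonrational-locus condition is about the \emph{natural} map $\Ideal_{Z}^{j}\to\sigma_*\Ideal_{\sigma^{-1}(Z_j)}$, so after computing that both sides equal $\mathfrak m^{j}$ as submodules of $R=\Gamma(Y,\cO_Y)=\sigma_*\cO_{\tY}$, you should note that the map in question is the inclusion of one into the other inside $R$, hence an equality.
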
 

\begin{proof}
We verify that the hypotheses of \cref{thm:efimov} hold. First note that $\sigma\colon \tY\rightarrow Y$ is a resolution of singularities for $Y$; in particular, it is an isomorphism outside $Z$. Moreover, the exceptional locus $E$ is isomorphic to the Fano variety $W$. As $Y$ is an affine cone over $W$, its coordinate ring is isomorphic to the homogeneous coordinate ring of $W$, which is integrally closed as $W$ is projectively normal, hence $Y$ is normal.

Recall that a cone over a Fano variety has rational singularities by \cite[Corollary~3.4]{kollar2013singularities}, hence, $\sigma_* \cO_{\tY}=\cO_Y$. Let $\rho\colon E\rightarrow Z$ be the restriction of $\sigma$ to $E$. As $E$ is a Fano variety, we have that $\cO_E$ is exceptional by Kodaira's vanishing theorem. As a consequence, we have $\rho_* \cO_E=\Ho^\bullet(E,\cO_E)=\mathbb{C}=\cO_Z$. 
We now prove that  $\rho_*$ is a localization. Since the functor $\rho_*$ has a left adjoint $\rho^*$, by \cite[Remark~3.3]{Ehf} it is a localization if and only if $\rho^*$ is fully faithful. This is indeed the case by the projection formula applied to $\rho_*$ using $\rho_* \cO_E=\cO_Z$ (see \cite[Lemma~2.4]{Kldcr} for details). Finally, considering the decomposition induced on $\Db(E)= \langle \ker(\rho_*), \rho^*\Db(Z) \rangle$, \cf \cite[Lemma~2.3]{Kuzview}, we have
\[
  \ker(\rho_*)=(\rho^*\Db(Z))^\perp=\langle \cO_E \rangle^\perp.
\]
The last thing to check in order to apply \cref{thm:efimov} is that the canonical map
\[
  \Ideal_Z^k\rightarrow \sigma_*(\sigma^{-1}(\Ideal_Z^k)\cdot \cO_{\tY})=\sigma_*\Ideal_{\sigma^{-1}(Z_k)}
\]
is an isomorphism for $k\geq 1$, where $Z_k$ is the $k$-th formal neighbourhood of $Z$. By the construction of blow-ups, the variety $\tY$ is defined as $\Proj(\bigoplus_{i=0}^\infty \Ideal_Z^i)$. On the other hand, the graded sheaf of modules corresponding to $\sigma^{-1}(\Ideal_Z^k)\cdot \cO_{\tY}$ is $\bigoplus_{i=0}^\infty \Ideal_Z^{k+i}$, which is equal to $\cO_{\tY/Y}(k)$, where $\cO_{\tY/Y}(1)$ is the twisting sheaf on the blow-up $\tY$. We recall that $\cO_{\tY/Y}(1)= \cO_{\tY}(-E)$ and $\cO_E(1)=\cO_E(-E)$. 
Consider for $k\geq 0$ the short exact sequence of sheaves on $Y$
\begin{equation}\label{eq:In}
    0\rightarrow  \Ideal^{k+1}_Z\rightarrow \Ideal^{k}_Z \rightarrow \Ideal^k_Z/\Ideal^{k+1}_Z  \rightarrow 0,
\end{equation}
and the short exact sequence of sheaves on $\tY$
\begin{equation}\label{eq:X-seq}
    0\rightarrow \cO_{\tY}(-(k+1)E)\rightarrow \cO_{\tY}(-kE)\rightarrow \cO_E(-kE) \rightarrow 0,
\end{equation}
as well as the morphism of exact triangles
\begin{equation}\label{eq:diagramnew}
    \begin{tikzcd}
    \Ideal_Z^{k+1}\ar[d] \ar[r] &\Ideal_Z^{k} \ar[r]\ar[d] &\Ideal^k_Z/\Ideal^{k+1}_Z  \ar[d,dotted]\\
    \sigma_*\cO_{\tY}(-(k+1)E) \ar[r] &\sigma_*\cO_{\tY}(-kE)  \ar[r] & \sigma_*\cO_E(-kE),
    \end{tikzcd}
\end{equation}
where the upper row is the triangle \eqref{eq:In} and the lower row comes from the application of $\sigma_*$ to \eqref{eq:X-seq}. We claim that the induced map $\Ideal^k_Z/\Ideal^{k+1}_Z  \rightarrow \sigma_* \cO_E(-kE)$ is an isomorphism for $k\geq 0$. As $Z$ is a point, it is enough to study the stalk of the morphism at $Z$.
Let $\operatorname{R}(E)$ be the homogeneous coordinate ring of $E=W\subset\mathbb{P}^n$. By definition, the affine coordinate ring of $Y$, namely $\operatorname{K}[Y]$, is just $\operatorname{R}(E)$ without its grading.
Identifying $\Ideal_Z\subset \operatorname{K}[Y]$ with $(x_0,\dots, x_n)$, we obtain that $\Ideal_Z^{k}/\Ideal^{k+1}_Z$ corresponds to the space of homogeneous polynomials of degree $k$ in $\operatorname{K}[Y]$. On the other hand, by Kodaira vanishing, we have that $\mathrm{H}^i(E,\cO_E(k))=0$ for any $i>0$, so we obtain $\sigma_*  \cO_E(-kE) = \mathrm{H}^0(E,\cO_E(k))$, which is isomorphic to the space of homogeneous polynomials of degree $k$ in $\operatorname{R}(E)$. By projective normality, we have that the composition $\mathrm{H}^0(\mathbb{P}^n,\cO_{\mathbb{P}^n}(k))\rightarrow \Ideal_Z^{k}/\Ideal^{k+1}_Z\rightarrow \mathrm{H}^0(E,\cO_E(k))$ is surjective, \cf \cite[Exercise~II.5.14(d)]{Rag}, hence the map  $\Ideal_Z^{k}/\Ideal^{k+1}_Z\rightarrow \mathrm{H}^0(E,\cO_E(k))$ is surjective. As both source and target of the latter are vector spaces of the same dimension, the map is an isomorphism.

To conclude the proof, we prove inductively that the canonical maps $\Ideal_Z^{k}\rightarrow \sigma_*\cO_{\tY}(-kE)$ are isomorphisms. The base case $k=0$ of the induction is given by the isomorphism $\sigma_*\cO_{\tY}=\cO_Y$. Then by the induction hypothesis the map $\Ideal^k_Z\rightarrow \sigma_*\cO_{\tY}(-kE)$ is an isomorphism, hence the canonical morphism on the left in \eqref{eq:diagramnew} is an isomorphism, concluding the inductive step. As we showed that $Z_k$ is a nonrational locus of $Y$ for $k\geq 1$, we can apply \cref{thm:efimov} and obtain the statement.
\end{proof}

\begin{remark} \label{rmk:nodalcase}
Note that \cref{Cor:ker} remains valid for varieties $Y$ with an isolated singular point $y$ which look, upon restriction to a formal neighborhood of $y$ in $Y$, like the cone singularity in the corollary.
Indeed, the crucial part of the proof is the check that the infinitesimal neighborhoods of the singularity are nonrational loci.
Now use that $\operatorname{Spec}(\widehat{\cO_{Y,y}})\to\operatorname{Spec}(\cO_{Y,y})$ is faithfully-flat, \cf \cite[{Tag~00MC}]{stacks-project}, and $\operatorname{Spec}(\cO_{Y,y})\to Y$ is flat, so the nonrational locus condition can be checked after base-change to $\operatorname{Spec}(\widehat{\cO_{Y,y}})$.
\end{remark}

\subsection{Derived base-change}
The last ingredient we need in the derived categories setting is the following base-change result. 

\begin{proposition}\label{prop:com}
Consider a cartesian square of varieties
\[
\begin{tikzcd}
X\times_S Y\ar[r, "q"] \ar[d, "p"] & Y\ar[d, "g"'] \\
X\ar[r, "f"] & S.
\end{tikzcd}
\]
Suppose that $g$ is a closed immersion and local complete intersection morphism, $X$ is Cohen--Macaulay, and $\codim_{X}(X\times_S Y)=\codim_S(Y)$. Then 
\[
  q_*p^*=g^*f_*.
\]
\end{proposition}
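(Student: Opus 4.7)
The plan is to recognize this as an instance of the general derived flat base-change theorem, which asserts that for a Tor-independent cartesian square of Noetherian schemes, the natural comparison map gives an isomorphism $\mathbf{L}g^*\mathbf{R}f_* \isomto \mathbf{R}q_*\mathbf{L}p^*$. Since the paper's convention is that all functors are implicitly derived, the claimed equality $q_*p^* = g^*f_*$ follows once we verify that the square is Tor-independent, \ie that for every triple of points $x\in X$, $y\in Y$ mapping to the same $s\in S$, one has $\operatorname{Tor}^{\cO_{S,s}}_i(\cO_{X,x},\cO_{Y,y}) = 0$ for $i>0$.

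The local complete intersection hypothesis on the closed immersion $g$ is precisely what makes this check tractable: Zariski-locally around $y$ in $S$, the ideal of $Y$ is generated by a regular sequence $f_1,\dots,f_c\in\cO_{S,s}$, where $c=\codim_S(Y)$, and the Koszul complex $K_\bullet(f_1,\dots,f_c)$ is a finite free resolution of $\cO_{Y,y}$ over $\cO_{S,s}$. Tensoring this resolution with $\cO_{X,x}$ over $\cO_{S,s}$ yields the Koszul complex $K_\bullet(f^*f_1,\dots,f^*f_c)$ on $\cO_{X,x}$, whose higher cohomology computes the desired Tor-sheaves.

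The main step is then to show that this pulled-back Koszul complex is acyclic in positive degrees, \ie that $(f^*f_1,\dots,f^*f_c)$ is a regular sequence in $\cO_{X,x}$. By construction, this sequence cuts out the fiber product $X\times_S Y$ inside $X$ locally at $x$, and by assumption its vanishing locus has codimension exactly $c$ in $X$. Now the Cohen--Macaulay hypothesis on $X$ enters: in a Cohen--Macaulay local ring, any sequence of elements whose quotient has the expected codimension equal to the length of the sequence is automatically a regular sequence (see \eg \cite[Theorem~2.1.2(d)]{BH:98} or \cite[Tag~00SC]{stacks-project}). This closes the Tor-independence check.

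Having established Tor-independence, we invoke the derived flat base-change theorem (for instance in the form proved in \cite[Tag~08IB]{stacks-project} or \cite{lipman}) to conclude $q_*p^* = g^*f_*$. The only non-routine ingredient is the passage from the codimension equality to the regularity of the pulled-back sequence, and this is exactly where the Cohen--Macaulay assumption is used in an essential way; without it, the hypothesis $\codim_X(X\times_S Y)=\codim_S(Y)$ would not suffice to guarantee Tor-independence.
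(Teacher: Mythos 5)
Your proof is correct and follows essentially the same route as the paper's: both reduce to Tor-independent base-change and verify Tor-independence by pulling back the Koszul resolution coming from the lci hypothesis, then use the Cohen--Macaulay hypothesis together with the codimension equality (via unmixedness / ``in a CM local ring, length-$c$ sequences cutting out codimension $c$ are regular'') to see that the pulled-back sequence stays regular. The only difference is cosmetic: you spell out the ``CM + codimension implies regular sequence'' step a bit more explicitly and cite \cite[Theorem~2.1.2(d)]{BH:98}, whereas the paper phrases it as an appeal to the unmixedness theorem.
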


\begin{proof}
The proposition is a corollary of Tor-independent base-change, \cf \cite[Tag~08IB]{stacks-project}.
In slightly more detail: Since local complete intersection immersions are Koszul-regular immersions, \cf\cite[Tag~09CC]{stacks-project}, one can use the Koszul complex to compute higher Tor groups.
The regular sequences that define $Y\subset S$ locally stay regular on $X$ because of the codimension assumption and the unmixedness theorem, \cf\cite[Tag~02JN]{stacks-project}.
So the Koszul complex stays exact after tensoring with $\cO_X$, and we see that higher Tor groups vanish, as required to apply Tor-independent base-change.

A proof can also be found in \cite[Corollary~2.27]{Ket}.
\end{proof}

\subsection{Spinor bundles on quadric hypersurfaces}

In this subsection we summarize some properties of spinor bundles on quadric hypersurfaces. Let $Q\subset\mathbb{P}^{n+1}$ be the (unique up to isomorphism) smooth quadric hypersurface of dimension $n$. The definition of spinor bundles on $Q$, given in \cite{Oq}, depends on the parity of $n$. 

Assume first that $n=2m+1$ is odd; in this case, the maximal dimension of a (projective) linear subspace contained in $Q$ is $m$. The parameter space for the $m$-planes contained in $Q$ is an irreducible smooth projective variety $S$. Let $\Oca_S(1)$ be the ample generator of $\mathrm{Pic}(S) \simeq \mathbb{Z}$; it can be shown that $\dim \Ho^0(S, \Oca_S(1)) = 2^{m+1}$. Now, for any $x \in Q$, consider the embedding
\[
i_x \colon S_x \coloneqq \{\PP^m \subset Q \mid x\in \PP^m \} \to S = \{ \PP^m \subset Q \}.
\]
The induced restriction map $\Ho^0(S, \Oca_S(1)) \to \Ho^0(S_x, i_x^*\Oca_S(1))$ turns out to be surjective, so its dual yields an inclusion
\[
\Ho^0(S_x, i_x^*\Oca_S(1))^\vee \to \Ho^0(S, \Oca_S(1))^\vee.
\]
Since $\dim \Ho^0(S_x, i_x^*\Oca_S(1)) = 2^m$ for any $x \in Q$, we obtain a morphism
\[
s \colon Q \to \mathrm{Gr}(2^m, 2^{m+1}).
\]
The spinor bundle $\Sca$ on $Q$ is defined as the pullback by $s$ of the tautological subbundle on $\mathrm{Gr}(2^m, 2^{m+1})$.

Let us move on to the case of a quadric of even dimension $n=2m$. The maximal dimension of a linear subspace contained in $Q$ is $m$. The parameter space for the $m$-planes contained in $Q$ has two connected components $S'$ and $S''$. Both $S'$ and $S''$ are smooth irreducible projective varieties. Let $\Oca_{S'}(1)$ and $\Oca_{S''}(1)$ be the ample generators of $\mathrm{Pic}(S') \simeq \mathbb{Z}$ and $\mathrm{Pic}(S'') \simeq \mathbb{Z}$, respectively; it can be shown that $\dim \Ho^0(S', \Oca_
{S'}(1)) = \dim \Ho^0(S'', \Oca_
{S''}(1)) = 2^m$. Now, for any $x \in Q$, consider the embeddings
\[
i'_x \colon S'_x = \{ \PP^m \in S' \mid x \in \PP^m \} \to S' \ \ \ \text{and} \ \ \ i''_x \colon S''_x = \{ \PP^m \in S'' \mid x \in \PP^m \} \to S''.
\]
The induced restriction maps $\Ho^0(S', \Oca_{S'}(1)) \to \Ho^0(S'_x, (i'_x)^*\Oca_{S'}(1))$ and $\Ho^0(S'', \Oca_{S''}(1)) \to \Ho^0(S''_x, (i''_x)^*\Oca_{S''}(1))$ turn out to be surjective. By passing to the dual we obtain the inclusions
\[
\Ho^0(S'_x, (i'_x)^*\Oca_{S'}(1))^\vee \to \Ho^0(S', \Oca_{S'}(1))^\vee \ \ \ \text{and} \ \ \ \Ho^0(S''_x, (i''_x)^*\Oca_{S''}(1))^\vee \to \Ho^0(S'', \Oca_{S''}(1))^\vee.
\]
Since $\dim \Ho^0(S'_x, (i'_x)^*\Oca_{S'}(1)) = \dim \Ho^0(S''_x, (i''_x)^*\Oca_{S''}(1)) = 2^{m-1}$ for any $x \in Q$, we obtain two morphisms
\[
s' \colon Q \to \mathrm{Gr}(2^{m-1}, 2^m) \ \ \ \text{and} \ \ \ s'' \colon Q \to \mathrm{Gr}(2^{m-1}, 2^m).
\]
The spinor bundle $\Sca'$ (resp.\ $\Sca''$) on $Q$ is defined as the pullback by $s'$ (resp.\ $s''$) of the tautological subbundle on $\mathrm{Gr}(2^{m-1}, 2^m)$.
We write $\cS$, respectively $\cS'$, $\cS''$, for the spinor bundle(s) on the odd, respectively even, dimensional quadric $Q$. These bundles enjoy the following properties.
\begin{theorem}\label{thm:qua}~
\begin{enumerate}
    \item The spinor bundles are stable, \cf\cite[Theorem~2.1]{Oq}.
    \item Suppose $Q$ is an even dimensional quadric and let $\cS'$, $\cS''$ be the two spinor bundles. Let $i\colon Q'\rightarrow Q$ be the closed immersion of a smooth hyperplane section, with spinor bundle $\cS$. Then $i^*\cS'=i^*\cS''=\cS$, \cf\cite[Theorem~1.4(i)]{Oq}.
    \item If $\cS$ is either the spinor bundle on the odd dimensional quadric or any of the two spinor bundles on the even dimensional quadric, then $\Ho^i(Q, \cS(k))=0$ for $0 <i<n$ and arbitrary $k\in\mathbb{Z}$.
    Furthermore $\Ho^0(Q,\cS(k))=0$ for $k\leq0$, and $\dim \Ho^0(Q, \cS(1))=2^{[(n+1)/2]}$, where $n$ is the dimension of $Q$, \cf\cite[Theorem~2.3]{Oq}.
    \item Suppose the quadric $Q$ has odd dimension $n=2m+1$. We have a short exact sequence
    \begin{equation}\label{eq:taut-odd}0\rightarrow\cS\rightarrow \cO_{Q}^{\oplus 2^{m+1}}\rightarrow\cS(1)\rightarrow0,
    \end{equation}
    and $\cS^{\vee}=\cS(1)$, \cf\cite[Theorem~2.8(i)]{Oq}.
    \item Suppose the quadric $Q$ has even dimension $n=2m$. We have short exact sequences
    \begin{equation}\label{eq:tautological-odd-spinor}
        \begin{split}
        &0\rightarrow \cS'\rightarrow \cO_{Q}^{\oplus 2^m}\rightarrow \cS''(1)\rightarrow 0,\\
        &0\rightarrow \cS''\rightarrow \cO_{Q}^{\oplus 2^m}\rightarrow \cS'(1)\rightarrow 0.
    \end{split}
    \end{equation}
    Furthermore, if $n\equiv0 \pmod 4$, then $\cS'^{\vee}=\cS'(1)$ and $\cS''^{\vee}=\cS''(1)$, and if $n\equiv2 \pmod 4$, then $\cS'^{\vee}=\cS''(1)$ and $\cS''^{\vee}=\cS'(1)$, \cf\cite[Theorem~2.8(ii)]{Oq}.
    \item Spinor bundles are exceptional. If $Q$ is even dimensional, $\cS'$ and $\cS''$ are orthogonal to each other, \cf\cite{Ksd}.
\end{enumerate}
\end{theorem}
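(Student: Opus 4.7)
The statement compiles classical facts about spinor bundles on smooth quadrics, essentially all due to Ottaviani \cite{Oq} and \cite{Ksd}, and after each item I have indicated the precise reference. My plan is therefore to prove the theorem by citation, but let me sketch the unifying idea. The quadric $Q$ is a rational homogeneous variety under the action of the spin group, and each spinor bundle is the equivariant bundle associated to a half-spin representation of the stabilizer parabolic; all of (a)--(f) can be read off from this description.

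Concretely, I would deduce the cohomology vanishing (c) directly from Borel--Weil--Bott, writing $\cS(k)$ as a homogeneous bundle attached to an explicit weight and identifying $\Ho^0(Q,\cS(1))$ with the half-spin representation of dimension $2^{[(n+1)/2]}$. The short exact sequences (d) and (e) then follow by realizing the evaluation map $\Ho^0(Q,\cS(1))\otimes\cO_Q\to\cS(1)$, which is surjective because $\cS(1)$ is globally generated, and identifying its kernel via the half-spin representation that is Serre-dual to $\Ho^0(Q,\cS(1))$; the precise duality statement in the even case depends on $n \bmod 4$ in the natural way.

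From (c)--(e), exceptionality and the mutual orthogonality in (f) reduce to a short $\operatorname{Ext}$-computation: twist the sequence in (d) or (e) by $\cS^{\vee}$ and apply (c) to the resulting resolution, which leaves only $\Hom(\cS,\cS)=\CC$ (and zero for the cross $\Hom$ groups in the even case). The stability statement (a) is Ottaviani's original result and follows from the homogeneity of $\cS$ together with a direct slope inequality. Finally, (b) is a geometric statement about the compatibility of the spinor varieties of $Q$ and of a smooth hyperplane section $Q'\subset Q$: the pullback under $i\colon Q'\hookrightarrow Q$ of either half of the spinor variety of $Q$ is identified with the spinor variety of $Q'$, and the tautological inclusions restrict compatibly.

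The main obstacle to a fully self-contained treatment would be the case analysis by the parity of $n$ and, for the duality in (e), by $n \bmod 4$; each parity requires slightly different Borel--Weil--Bott bookkeeping. Since these results are used in the sequel only as a toolbox, the economical choice is to invoke Ottaviani's papers directly, as already indicated in the statement.
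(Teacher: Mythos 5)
Your proposal is correct and matches the paper's approach: the paper states these facts without proof, citing Ottaviani's work and Kapranov for each item, exactly as you do. Your supplementary Borel--Weil--Bott sketch is a sound unifying picture, but since the paper also treats this theorem as a citation toolbox, there is nothing further to reconcile.
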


We summarize here some cohomology computations. 

\begin{lemma}\label{lem:canonical bundle quadric}
Let $Q\subset\mathbb{P}^{n+1}$ be the smooth quadric of dimension $n$. Then
\begin{equation}\label{eq:canonical}\Canonical_Q=\cO_Q(-n)\end{equation} and the following cohomology groups vanish:
\begin{equation}\label{eq:O(-t)-vanish}
\Ho^\bullet(Q, \cO_{Q}(-k))=0 \quad \text{for} \quad k=1,\dots,n-1.
\end{equation}
\end{lemma}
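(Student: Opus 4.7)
The plan is to split the lemma into its two independent assertions, both of which are standard computations for projective hypersurfaces.

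For the canonical bundle formula \eqref{eq:canonical}, I would apply the adjunction formula to the closed immersion $i\colon Q\hookrightarrow\mathbb{P}^{n+1}$. Since $Q$ is a smooth degree-$2$ hypersurface, its normal bundle is $\cN_{Q/\mathbb{P}^{n+1}}=\cO_Q(2)$, and combined with $\Canonical_{\mathbb{P}^{n+1}}=\cO_{\mathbb{P}^{n+1}}(-(n+2))$ this yields
\[
\Canonical_Q = i^*\Canonical_{\mathbb{P}^{n+1}}\otimes \det\cN_{Q/\mathbb{P}^{n+1}} = \cO_Q(-(n+2))\otimes\cO_Q(2) = \cO_Q(-n).
\]

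For the vanishing \eqref{eq:O(-t)-vanish}, the tool of choice is the ideal sheaf sequence of $Q\subset\mathbb{P}^{n+1}$ twisted by $\cO(-k)$, namely
\[
0\to \cO_{\mathbb{P}^{n+1}}(-k-2)\to \cO_{\mathbb{P}^{n+1}}(-k)\to \cO_Q(-k)\to 0,
\]
together with the Bott/Serre formula for line bundles on projective space: $\Ho^i(\mathbb{P}^{n+1},\cO(m))$ vanishes unless $(i=0,\ m\geq 0)$ or $(i=n+1,\ m\leq -(n+2))$. For $1\leq k\leq n-1$ one has $-(n+1)\leq -k-2\leq -k\leq -1$, so both $-k$ and $-k-2$ lie in the range where all cohomology on $\mathbb{P}^{n+1}$ vanishes. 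The long exact sequence of cohomology then forces $\Ho^\bullet(Q,\cO_Q(-k))=0$.

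There is essentially no obstacle here: both parts follow from the adjunction formula and the standard cohomology of line bundles on projective space via the hypersurface exact sequence. The only thing to be careful about is checking the numerical range of $k$ so that the shift by $-2$ does not push $-k-2$ into the range $m\leq -(n+2)$ where $\Ho^{n+1}(\mathbb{P}^{n+1},\cO(m))$ is nonzero, which is precisely why the stated range stops at $k=n-1$.
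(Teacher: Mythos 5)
Your proof of the canonical bundle formula is the same adjunction computation as in the paper. For the vanishing statement you take a genuinely different route: the paper simply invokes Kodaira's vanishing theorem (which gives $\Ho^i(Q,\cO_Q(-k))=0$ for $i<n$ since $\cO_Q(k)$ is ample, and then implicitly handles $\Ho^n$ via Serre duality, using that $\Canonical_Q\otimes\cO_Q(k)=\cO_Q(k-n)$ has no sections for $k\le n-1$), whereas you push the computation onto ambient projective space via the restriction sequence
\[
0\to\cO_{\PP^{n+1}}(-k-2)\to\cO_{\PP^{n+1}}(-k)\to\cO_Q(-k)\to 0
\]
and use only the standard cohomology of line bundles on $\PP^{n+1}$. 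Both routes are correct. Yours has the advantage of being elementary and self-contained, avoiding Kodaira vanishing and Serre duality entirely, and it also makes the numerical bound $k\le n-1$ completely transparent: as you point out, this bound is precisely what keeps $-k-2\ge -(n+1)$, so that $\Ho^{n+1}(\PP^{n+1},\cO(-k-2))$ vanishes. The paper's version is shorter but relies on heavier machinery and leaves the top-degree case to the reader.
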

\begin{proof}
As $Q$ is a smooth hypersurface of degree $2$ in $\PP^{n+1}$, by the adjunction formula we have $\Canonical_Q=\cO_Q(-n-2+2)=\cO_Q(-n)$. The vanishing statement \eqref{eq:O(-t)-vanish} follows from Kodaira's vanishing theorem.
\end{proof}

\begin{remark}\label{rem:twist-vanish} Let $\cS$ be any spinor bundle on the smooth quadric $Q$ of dimension $n$. Using Serre duality and \cref{thm:qua}(c)-(e), we have $\Ho^n(Q,\cS(k))=0$ for $k \ge 1-n$. In particular, $\Ho^\bullet(Q, \cS(k))=0$ for $1-n \le k \le 0$.
\end{remark}

\begin{lemma}\label{lem:odd-quad}
Let $Q\subset\mathbb{P}^{n+1}$ be the smooth quadric of odd dimension $n=2m+1$. We have
\begin{equation}\label{eq:S(t)-vanish}
    \Hom^\bullet(\cS(k),\cS)=\begin{cases} \mathbb{C} &\text{if}\ k = 0 \\
    \mathbb{C}[-1] &\text{if}\ k = 1.
    \end{cases}
\end{equation}
\end{lemma}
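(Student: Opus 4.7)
The case $k=0$ is immediate: by \cref{thm:qua}(f) the spinor bundle $\cS$ on the odd-dimensional quadric is exceptional, so $\Hom^\bullet(\cS,\cS)=\mathbb{C}$.

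For $k=1$ the plan is to apply $\Hom^\bullet(-,\cS)$ to the tautological short exact sequence
\[
0\rightarrow\cS\rightarrow \cO_{Q}^{\oplus 2^{m+1}}\rightarrow\cS(1)\rightarrow 0
\]
from \cref{thm:qua}(d) and then analyze the resulting long exact sequence. The key input will be the vanishing $\Ho^\bullet(Q,\cS)=0$, which I would deduce by combining \cref{thm:qua}(c) (middle cohomology vanishes and $\Ho^0(Q,\cS)=0$) with \cref{rem:twist-vanish} (top cohomology $\Ho^n(Q,\cS)$ vanishes since $n\ge 1-n$ for $n\ge 1$). This makes the two outer terms $\Hom^\bullet(\cO_Q^{\oplus 2^{m+1}},\cS) = \Ho^\bullet(Q,\cS)^{\oplus 2^{m+1}}$ trivial in the long exact sequence.

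The long exact sequence then collapses to give isomorphisms $\Hom^{i+1}(\cS(1),\cS)\cong \Hom^i(\cS,\cS)$, and since $\cS$ is exceptional the right-hand side is $\mathbb{C}$ in degree $i=0$ and zero otherwise. This yields $\Hom^\bullet(\cS(1),\cS)=\mathbb{C}[-1]$, as claimed.

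There is no real obstacle here: the argument is a routine diagram chase once the correct vanishing for $\Ho^\bullet(Q,\cS)$ is assembled from the pieces already collected in \cref{thm:qua} and \cref{rem:twist-vanish}. The only thing to watch is the degree shift coming from applying a contravariant functor to the short exact sequence, which is precisely what produces the shift $[-1]$ in the final answer.
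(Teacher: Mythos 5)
Your proof is correct and follows the same approach as the paper: apply $\Hom^\bullet(-,\cS)$ to the tautological sequence \eqref{eq:taut-odd}, use the vanishing of $\Ho^\bullet(Q,\cS)$ (already packaged in \cref{rem:twist-vanish}) to kill the middle term, and read off the shift from the long exact sequence.
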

\begin{proof}
The isomorphism for $k=0$ follows from the exceptionality of $\cS$, see \cref{thm:qua}(f).
For the proof of the second isomorphism, we use sequence \eqref{eq:taut-odd}. Consider the long exact sequence induced by applying $\Hom^\bullet(-,\cS)$.
This provides the exact triangle
\begin{equation*}
    \Hom^\bullet(\cS,\cS)\leftarrow  \Hom^\bullet(\cO^{\oplus 2^{m+1}}_Q,\cS) \leftarrow \Hom^\bullet(\cS(1),\cS).
\end{equation*}
As the central term vanishes by \cref{rem:twist-vanish}, we obtain
\begin{equation*}
    \Hom^\bullet(\cS(1),\cS)=\Hom^\bullet(\cS,\cS)[-1]=\mathbb{C}[-1].
    \qedhere
\end{equation*}
\end{proof}

\begin{lemma}\label{lem:even-quad}
Let $Q\subset\mathbb{P}^{n+1}$ be the smooth quadric of even dimension $n=2m\geq 2$. Let $\cS'$ and $\cS''$ be its spinor bundles. We have
\begin{equation}\label{eq:S'-vanish}
    \Hom^\bullet(\cS'(k),\cS')=\Hom^\bullet(\cS''(k),\cS'')=\begin{cases} \mathbb{C} & \text{if}\ k = 0   \\
    0 &\text{if}\ k = 1,
    \end{cases}
\end{equation}
and 
\begin{equation}\label{eq:S''-vanish}
    \Hom^\bullet(\cS''(k),\cS')=\Hom^\bullet(\cS'(k),\cS'')= \begin{cases}
    0 & \text{if}\ k = 0\\
    \mathbb{C}[-1] &\text{if}\ k = 1.
    \end{cases}
\end{equation}
\end{lemma}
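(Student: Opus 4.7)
The plan is to mimic the argument of \cref{lem:odd-quad}, using the two tautological short exact sequences \eqref{eq:tautological-odd-spinor} in place of \eqref{eq:taut-odd}, combined with the exceptionality and mutual orthogonality of the spinor bundles recalled in \cref{thm:qua}(f).

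First, I would dispose of the case $k=0$: the equalities $\Hom^\bullet(\cS',\cS')=\Hom^\bullet(\cS'',\cS'')=\mathbb{C}$ follow from exceptionality, and the vanishings $\Hom^\bullet(\cS'',\cS')=\Hom^\bullet(\cS',\cS'')=0$ follow from the fact that $\cS'$ and $\cS''$ are orthogonal to each other in $\Db(Q)$.

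For $k=1$, the key observation is that $\Hom^\bullet(\cO_Q,\cS')=\Ho^\bullet(Q,\cS')=0$ and likewise for $\cS''$, by \cref{rem:twist-vanish} applied with exponent $0$, which lies in the allowed range $1-n\le k \le 0$ since $n=2m\ge 2$. Hence, applying $\Hom^\bullet(-,\cS')$ to the second sequence in \eqref{eq:tautological-odd-spinor}, namely $0\to\cS''\to\cO_Q^{\oplus 2^m}\to\cS'(1)\to 0$, yields a distinguished triangle
\[
  \Hom^\bullet(\cS'(1),\cS')\to \Hom^\bullet(\cO_Q^{\oplus 2^m},\cS')\to\Hom^\bullet(\cS'',\cS'),
\]
whose middle term vanishes by the above and whose right term vanishes by orthogonality, giving $\Hom^\bullet(\cS'(1),\cS')=0$. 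Applying instead $\Hom^\bullet(-,\cS'')$ to the same sequence, the middle term still vanishes while the right term is now $\Hom^\bullet(\cS'',\cS'')=\mathbb{C}$, so the triangle forces $\Hom^\bullet(\cS'(1),\cS'')=\mathbb{C}[-1]$. The remaining two identities $\Hom^\bullet(\cS''(1),\cS'')=0$ and $\Hom^\bullet(\cS''(1),\cS')=\mathbb{C}[-1]$ are obtained in exactly the same way by starting from the first sequence in \eqref{eq:tautological-odd-spinor} and applying $\Hom^\bullet(-,\cS'')$ and $\Hom^\bullet(-,\cS')$, respectively.

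There is no real obstacle here; the only point requiring some care is ensuring that the relevant cohomology vanishings of $\cS'$ and $\cS''$ land in the range where \cref{rem:twist-vanish} applies, which is the reason the case $n=2m\ge 2$ is singled out in the statement.
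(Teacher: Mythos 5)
Your proof is correct, and it differs from the paper's in one place in a worthwhile way. For the vanishing $\Hom^\bullet(\cS'(1),\cS')=0$, the paper restricts to a smooth hyperplane section $i\colon Q'\hookrightarrow Q$, uses $i^*\cS'=\cS$ and the injectivity of the restriction map $\Hom^0(\cS',\cS')\to\Hom^0(\cS',i_*i^*\cS')$ to conclude $\Hom^\bullet(\cS',\cS'(-1))=0$. You instead apply $\Hom^\bullet(-,\cS')$ to the \emph{other} tautological sequence $0\to\cS''\to\cO_Q^{\oplus 2^m}\to\cS'(1)\to 0$ and observe that the middle term $\Ho^\bullet(Q,\cS')^{\oplus 2^m}$ vanishes by the acyclicity range in \cref{rem:twist-vanish} while the right term $\Hom^\bullet(\cS'',\cS')$ vanishes by orthogonality. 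Your route is the more uniform one, since it treats all four $k=1$ computations by the same tautological-sequence mechanism and avoids invoking the hyperplane section plus the injectivity step; what the paper's restriction argument buys is independence from the orthogonality fact $\Hom^\bullet(\cS'',\cS')=0$, but that fact is used anyway elsewhere in the proof, so there is no net gain. For the remaining three identities your argument coincides with the paper's in spirit, differing only in which of $\Hom^\bullet(\cS'(1),\cS'')$ or $\Hom^\bullet(\cS''(1),\cS')$ is computed directly, which is immaterial given the symmetry of the two spinor bundles.
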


\begin{proof}

If $k=0$, the isomorphism~\eqref{eq:S'-vanish} holds because $\cS'$ is exceptional by \cref{thm:qua}(f). To prove the vanishing of $\Hom^\bullet(\cS'(1),\cS')$, consider the defining sequence of a smooth hyperplane section $i\colon Q'\rightarrow Q$ tensored with $\cS'$
\[
  0\rightarrow \cS'(-1)\rightarrow \cS'\rightarrow i_*i^*\cS'\rightarrow0.
\]
Applying $\Hom^\bullet(\cS',-)$ and using adjunction we get 
\begin{equation*}
    \Hom^\bullet(\cS',\cS'(-1))\rightarrow  \Hom^\bullet(\cS',\cS') \rightarrow \Hom^\bullet(\cS',i_*i^*\cS')=\Hom^\bullet(i^*\cS',i^*\cS').
\end{equation*}
Recall that by \cref{thm:qua}(b) we have $i^*\cS'=\cS$, where $\cS$ is the spinor bundle on $Q'$. As spinor bundles are exceptional, we have $\Hom^\bullet(\cS',\cS')=\mathbb{C}=\Hom^\bullet(\cS,\cS)$. Moreover, the map $\Hom^0(\cS',\cS') \rightarrow \Hom^0(\cS',i_*i^*\cS')$ is injective, hence an isomorphism. We conclude that $\Hom^\bullet(\cS'(1),\cS')=\Hom^\bullet(\cS',\cS'(-1))=0$.

We proceed with the proof of \eqref{eq:S''-vanish}. The vanishing for $k=0$ holds by \cref{thm:qua}(f). We calculate $\Hom^\bullet(\cS''(1),\cS')$. Applying $\Hom^\bullet(-,\cS')$ to the sequence \eqref{eq:tautological-odd-spinor}
\[
  0\rightarrow \cS'\rightarrow \cO^{\oplus 2^m}_Q\rightarrow \cS''(1)\rightarrow0,
\]
we get 
\begin{equation*}
    \Hom^\bullet(\cS',\cS')\leftarrow  \Hom^\bullet(\cO^{\oplus 2^m}_Q,\cS') \leftarrow \Hom^\bullet(\cS''(1),\cS').
\end{equation*}
As the central term vanishes by \cref{rem:twist-vanish}, we obtain
\begin{equation*}
    \Hom^\bullet(\cS''(1),\cS')
    =\Hom^\bullet(\cS',\cS')[-1]=\mathbb{C}[-1].
    \qedhere
\end{equation*}
\end{proof}

We end this section by recalling Kapranov's Lefschetz decomposition for quadrics. 

\begin{theorem}\cite[Lemma~2.4]{KPhpdq}\label{thm:qsod}
Let $Q\subset \mathbb{P}^{n+1}$ be the smooth quadric of dimension $n$. Then we have the dual Lefschetz decomposition
\begin{equation}\label{eq:Lefschetz-quadric}
    \Db(Q)=\langle\cB_{n-1}(1-n) , \dots, \cB_1(-1), \cB_0 \rangle.
\end{equation}
Here, if $n$ is odd, we have
\begin{equation*}
    \cB_0 = \langle \cS, \cO_{Q} \rangle \quad \mathrm{and} \quad \cB_1=\dots=\cB_{n-1}=\langle \cO_{Q}\rangle,
\end{equation*}
where the bundle $\cS$ is the unique spinor bundle on $Q$. If $n$ is even, we have
\begin{equation*}
    \cB_0=\cB_1 = \langle \cS', \cO_{Q} \rangle \quad \mathrm{and} \quad \cB_2 =\dots =\cB_{n-1}=\langle \cO_{Q}\rangle,
\end{equation*}
where $\cS'$ is any of the two spinor bundles on $Q$.
\end{theorem}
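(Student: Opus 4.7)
The plan is to recast the claim as the existence of a full exceptional collection on $Q$ and verify it in two parts: semiorthogonality, which follows routinely from the cohomology lemmas already collected, and generation, which is the substantive content of Kapranov's theorem. Concretely, unpacking the dual Lefschetz blocks, the statement for odd $n=2m+1$ asserts that
\[
\langle \cO_Q(1-n),\, \cO_Q(2-n),\, \ldots,\, \cO_Q(-1),\, \cS,\, \cO_Q \rangle
\]
is a full exceptional collection, and for even $n=2m$ that
\[
\langle \cO_Q(1-n),\, \ldots,\, \cO_Q(-2),\, \cS'(-1),\, \cO_Q(-1),\, \cS',\, \cO_Q \rangle
\]
is. The Lefschetz grouping into $\cB_i(-i)$ is then automatic by pairing each $\cS^{(\prime)}(-i)$ (when present) with $\cO_Q(-i)$ into $\cB_i(-i)=\langle\cS^{(\prime)},\cO_Q\rangle(-i)$, and taking $\cB_i=\langle\cO_Q\rangle$ for the remaining indices.

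For semiorthogonality, exceptionality of the individual objects is \cref{thm:qua}(f) for the spinor bundles and Kodaira vanishing (\cref{lem:canonical bundle quadric}) for each $\cO_Q(k)$. The vanishings $\Hom^\bullet(\cO_Q(-i),\cO_Q(-j))=0$ for $1\le i<j\le n-1$ follow from \eqref{eq:O(-t)-vanish}. The vanishings of $\Hom^\bullet$ between $\cO_Q(-j)$ and a spinor bundle reduce, via Serre duality using $\omega_Q=\cO_Q(-n)$ and the self-duality relations in \cref{thm:qua}(d)-(e), to the vanishings $\Ho^\bullet(Q,\cS(k))=0$ for $1-n\le k\le 0$ recorded in \cref{rem:twist-vanish}. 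The Hom spaces between different (shifts of) spinor bundles, which are needed only in the even case, are given by \cref{lem:odd-quad} and \cref{lem:even-quad}. In this way every required semiorthogonality check is reduced to a calculation already carried out.

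The serious work is generation. My plan is to produce on $Q\times Q$ a Beilinson--Kapranov-style resolution of the diagonal $\cO_\Delta$ whose terms are external tensor products $F_i\boxtimes G_i$ with $F_i,G_i$ in (shifts of) the proposed collection. One builds such a resolution by restricting the Koszul resolution of the diagonal of $\PP^{n+1}\times\PP^{n+1}$ to $Q\times Q$ and then simplifying the resulting complex using the tautological sequences \eqref{eq:taut-odd} and \eqref{eq:tautological-odd-spinor}, which express the extra $\cO_Q^{\oplus}$ terms in terms of spinor bundles; the Clifford-algebra structure on $Q$ is precisely what makes the spinor bundles appear as the natural correction sheaves. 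Convolving the resolved $\cO_\Delta$ through the two projections $p_1,p_2\colon Q\times Q\to Q$ expresses an arbitrary object $\Fca=p_{2,*}(p_1^*\Fca\otimes\cO_\Delta)$ as an iterated extension of the prescribed generators, establishing generation.

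The main obstacle is precisely the construction of this diagonal resolution: it is not formal and requires invoking the Clifford/spinor geometry of $Q$. An alternative I would consider if the direct construction gets too technical is induction on $n$ via a smooth hyperplane section $j\colon Q'\hookrightarrow Q$: pulling back the inductive Lefschetz decomposition on $Q'$ using that $j^*\cS^{(\prime)}=\cS$ (\cref{thm:qua}(b)) and extending by the missing $\cO_Q$-twist, one reduces generation on $Q$ to generation on $Q'$ together with an Orlov-type blow-up / hyperplane section argument controlling the kernel of $j^*$. Either route yields the decomposition and completes the proof.
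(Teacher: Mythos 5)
Your proposal arrives at the correct exceptional collections (and correctly unpacks the Lefschetz blocks in both parities), and your semiorthogonality checks do reduce to the lemmas already in the paper. However, the route you take for generation is genuinely different from, and much heavier than, the paper's. The paper does not re-prove generation at all: it quotes Kapranov's full exceptional collection on quadrics directly from \cite{Ksd}, namely $\Db(Q)=\langle \cS,\cO_{Q},\cO_{Q}(1),\dots,\cO_{Q}(n-1)\rangle$ in the odd case and $\Db(Q)=\langle \cS',\cS'',\cO_{Q},\cO_{Q}(1),\dots,\cO_{Q}(n-1)\rangle$ in the even case, and then simply rearranges. In the even case it performs the single right mutation $\mathbb{R}_{\cO_Q}\cS''=\cS'(1)[-1]$ (computed from $\Hom^\bullet(\cS'',\cO_Q)=\bC^{2^m}$ and the tautological sequence~\eqref{eq:tautological-odd-spinor}) to obtain $\langle \cS',\cO_Q,\cS'(1),\cO_Q(1),\dots\rangle$; then in both cases a twist and an application of \cref{lemma:serre1}(a) (cycling components through the Serre functor) converts this into the dual Lefschetz form. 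This is a short rearrangement, not a fresh proof. Your plan, by contrast, proposes to build a Beilinson--Kapranov resolution of $\cO_\Delta$ on $Q\times Q$, which is precisely the content of Kapranov's original theorem and is far from formal, as you yourself note; the hyperplane-section induction you offer as fallback has the same flavor and still requires a nontrivial argument controlling $\ker(j_*)$. Both routes would work in principle, but they re-derive a classical result that the paper is content to cite. The insight you are missing is that once Kapranov's collection is granted, the dual Lefschetz shape follows from one mutation plus the Serre-functor rotation of \cref{lemma:serre1}(a), with no diagonal resolution needed.
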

\begin{proof}
For an odd dimensional quadric we have by \cite{Ksd} the semiorthogonal decomposition
\[
  \Db(Q)=\langle \cS,\cO_{Q},\cO_{Q}(1),\dots,\cO_{Q}(n-1)\rangle.
\]
It suffices to suitably group its components and apply \cref{lemma:serre1}(a) to get the desired dual Lefschetz decomposition.

For a quadric of dimension $n=2m$ we have by \cite{Ksd} the semiorthogonal decomposition
\[
  \Db(Q)=\langle \cS',\cS'',\cO_{Q},\cO_{Q}(1),\dots,\cO_{Q}(n-1)\rangle.
\]
We claim that $\mathbb{R}_{\cO_{Q}}\cS''=\cS'(1)[-1]$. 
First, we have
\[
  \Hom^\bullet(\cS'',\cO_{Q})=\Hom^\bullet(\cO_{Q},\cS''^{\vee})= \Ho^\bullet(Q,\cS'''(1))
\]
where $\cS'''$ is one of the spinor bundles depending on the parity of $m$, see \cref{thm:qua}(e). By \cref{thm:qua}(c), we have the isomorphism
\[
  \Ho^\bullet(Q,\cS'''(1))=\mathbb{C}^{\oplus 2^m}.
\]
Then, using the exact sequence \eqref{eq:tautological-odd-spinor}, we obtain that $\cone(\cS''\rightarrow \mathbb{C}^{\oplus 2^m}\otimes \cO_{Q})=\cS'(1)$, which shows that $\mathbb{R}_{\cO_{Q}}\cS''=\cS'(1)[-1]$.
By \cref{SOD-mutations} we deduce the semiorthogonal decomposition
\[
  \Db(Q)=\langle \cS',\cO_{Q}, \cS'(1), \cO_{Q}(1),\dots,\cO_{Q}(n-1)\rangle.
\]
Tensoring by $\cO_{Q}(-1)$ and applying \cref{lemma:serre1}(a) as before, we get the desired dual Lefschetz decomposition.
\end{proof}


\section{Categorical resolutions of nodal varieties} \label{sec:generalcase}

In this section we prove \cref{thm:ns}, which is obtained from \cref{prop:catresker,prop:sperical-odd,prop:seven}.
Let $Y$ be a quasiprojective variety with an isolated nodal singularity $y$. Let $\sigma\colon \widetilde{Y}\rightarrow Y$ be the resolution of singularities provided by the blow-up at the singular point. Recall that the exceptional divisor $j\colon Q\rightarrow \widetilde{Y}$ is isomorphic to the smooth quadric of dimension $\dim(Y) -1$. Let $\cS$ be the spinor bundle on $Q$ if $\dim(Y)$ is even, and denote by $\cS',\cS''$ the spinor bundles if $\dim(Y)$ is odd. 
Recall from \cref{subsec:nodalsing} that $\cO_Q(1)= j^*\cO_{\widetilde{Y}}(-Q)$.

For the sake of simplicity, let us assume first that $Y$ is projective: we shall explain how to adjust the proofs when $Y$ is quasiprojective in \cref{rem:quasiprojective}. We start with some observations on the properties of certain objects in $\Db(\tY)$. 

\begin{lemma}\label{lem:excep}
If $\dim(Y) \geq 3$, then $j_*\cO_Q(k)$ is exceptional. Moreover, if $\dim(Y)$ is odd, then $j_*\cS'$ and $j_*\cS''$ are exceptional as well, and we have
\begin{equation}\label{eq:push-S''-S'}
    \Hom^\bullet(j_*\cS',j_*\cS'')= \bC[-2].
\end{equation} 
If $\dim(Y)$ is even, then we have that 
\begin{equation}\label{eq:push-hom(S-S)}
    \Hom^\bullet(j_*\cS,j_*\cS)=\bC \oplus \bC[-2].
\end{equation}
\end{lemma}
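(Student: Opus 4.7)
The plan is to reduce all three Hom-computations to cohomology computations on the quadric $Q$, which then follow immediately from the preliminaries.

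The key input is the adjunction $\Hom^\bullet(j_*\cA,j_*\cB)=\Hom^\bullet(\cA,j^!j_*\cB)$, where $j^!\Fca=j^*\Fca(-1)[-1]$ since $Q$ is a smooth Cartier divisor with $\cN_{Q/\tY}=\cO_Q(-1)$. The self-intersection $j^*j_*\cB$ is computed by pulling back the Koszul resolution $0\to\cO_{\tY}(-Q)\to\cO_{\tY}\to j_*\cO_Q\to 0$: the restricted differential $\cO_Q(1)\to\cO_Q$ is multiplication by the restriction to $Q$ of a local equation of $Q$, hence zero, so $j^*j_*\cB=\cB\oplus\cB(1)[1]$. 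Feeding this back into the adjunction yields the clean formula
\[
\Hom^\bullet(j_*\cA,j_*\cB)=\Hom^\bullet(\cA,\cB)\oplus\Hom^\bullet(\cA,\cB(-1))[-1].
\]

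With this in hand, each assertion of the lemma is a three-line calculation. For $\cA=\cB=\cO_Q(k)$ the first summand is $\CC$ (exceptionality of $\cO_Q$) and the second is $\Ho^\bullet(Q,\cO_Q(-1))[-1]=0$ by \cref{lem:canonical bundle quadric} (using $\dim Q\geq 2$), so $j_*\cO_Q(k)$ is exceptional. When $\dim Y$ is odd, so $Q$ is even-dimensional, \eqref{eq:S'-vanish} yields both exceptionality of $\cS'$, $\cS''$ and the vanishing of $\Hom^\bullet(\cS'(1),\cS')$, hence $j_*\cS'$ (and likewise $j_*\cS''$) is exceptional; plugging \eqref{eq:S''-vanish} into the formula with $\cA=\cS'$, $\cB=\cS''$ yields $0\oplus\CC[-1][-1]=\CC[-2]$. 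When $\dim Y$ is even, so $Q$ is odd-dimensional, \cref{lem:odd-quad} applied with $\cA=\cB=\cS$ produces $\CC\oplus\CC[-2]$.

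The step I expect to demand the most care is the splitting $j^*j_*\cB=\cB\oplus\cB(1)[1]$, since in general the self-intersection of a Cartier divisor is only controlled by a distinguished triangle $\cB(1)[1]\to j^*j_*\cB\to\cB$. Fortunately, here the pulled-back Koszul differential is literally zero, so the triangle does split; but even without invoking the splitting, in each application above the two resulting contributions sit in disjoint cohomological degrees, forcing the connecting morphism to vanish for degree reasons. Modulo this check, the proof is pure bookkeeping against the preliminary cohomological computations.
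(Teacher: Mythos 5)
Your proof is correct, and it refines the paper's argument into a cleaner, more uniform form. The paper's own proof handles the exceptionality of $j_*\cO_Q(k)$ and of $j_*\cS'$, $j_*\cS''$ by invoking the fully-faithfulness of $j_*$ on suitable Lefschetz blocks (\cref{prop:D-tilde-SOD}, i.e.\ Kuznetsov's Proposition~4.1 in \cite{Kldcr}), and only for the mixed $\Hom$-computations does it unwind the Koszul triangle $j^*j_*\cA \to \cA \to \cA(1)[2]$ and apply $\Hom^\bullet(-,\cB)$. You instead derive everything from the single formula
\[
\Hom^\bullet(j_*\cA,j_*\cB)=\Hom^\bullet(\cA,\cB)\oplus\Hom^\bullet(\cA,\cB(-1))[-1],
\]
obtained from the right-adjoint version of the adjunction ($j^!=j^*(-1)[-1]$) together with the splitting $j^*j_*\cB=\cB\oplus\cB(1)[1]$. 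The splitting is legitimate for any Cartier divisor: as you note, resolving $j_*\cO_Q$ by $[\cO_{\tY}(-Q)\to\cO_{\tY}]$ and tensoring the image of $j_*\cB$ against it yields, upon restriction to $Q$, a two-term complex with differential given by multiplication by $s_Q|_Q=0$, so the complex is the direct sum of its terms (locally: $N\otimes^{\mathbf L}_A A/f \cong [N\xrightarrow{0}N]$ whenever $f$ kills $N$). Your fallback observation that the connecting morphism must vanish for degree reasons in each of the four applications is also correct and makes the argument robust even if one is squeamish about the canonicity of the splitting. The upshot is a self-contained proof that bypasses the appeal to \cite[Proposition~4.1]{Kldcr} for the exceptionality claims, at the cost of making the Koszul input more explicit; the two proofs consume exactly the same cohomological vanishings from \cref{lem:canonical bundle quadric,lem:odd-quad,lem:even-quad} and \cref{thm:qua}.
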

\begin{proof}
By \cref{prop:D-tilde-SOD}, the functor $j_*$ is fully faithful on the subcategory generated by the exceptional object $\cO_Q$. It follows that $j_*\cO_Q(k)=j_*\cO_Q \otimes \cO_{\tY}(-kQ)$ is exceptional too. 

Now let us assume that $\dim(Y)$ is odd. Note that the role of the spinor bundles $\cS'$ and $\cS''$ is interchangeable. Applying \cref{prop:D-tilde-SOD} to the Lefschetz decomposition \eqref{eq:Lefschetz-quadric}, we get that $j_*\cS'$ and $j_*\cS''$ are exceptional.
Next, we compute $\Hom^\bullet(j_*\cS',j_*\cS'')$, which is isomorphic to $\Hom^\bullet(j^* j_*\cS',\cS'')$ by adjunction. 
Consider the exact triangle on $Q$
\begin{equation}\label{eq:restrict-sequence}
  j^*j_*\cS'\rightarrow \cS'\rightarrow \cS'(-Q)[2]=\cS'(1)[2],
\end{equation}
and the associated long exact sequence obtained by applying $\Hom^\bullet(-, \cS'')$. By \cref{thm:qua}(f) and \cref{lem:even-quad} we know 
\begin{equation*}
    \Hom^\bullet(\cS',\cS'')=0, \quad \quad \Hom^\bullet(\cS'(1),\cS'')=\mathbb{C}[-1].
\end{equation*}   
Substituting these equalities, we obtain
\[
  \Hom^\bullet(j_*\cS',j_*\cS'')= \Hom^\bullet(\cS'(1),\cS'')[-1]= \mathbb{C}[-2],
\]
proving the equality \eqref{eq:push-S''-S'}.  

Following the same strategy, we compute $\Hom^\bullet(j_* \cS, j_* \cS)$ when $\dim(Y)$ is even. 
By applying $\Hom^\bullet(-, \cS)$ to the exact triangle \eqref{eq:restrict-sequence}, we obtain
\[
  \Hom^\bullet(j^*j_*\cS,\cS)\leftarrow \Hom^\bullet(\cS,\cS)\leftarrow  \Hom^\bullet(\cS(1)[2],\cS).
\]
Recalling that
\[
  \Hom^\bullet(\cS,\cS)=\mathbb{C}, \quad \Hom^\bullet(\cS(1),\cS)=\mathbb{C}[-1],
\]
by \cref{lem:odd-quad}, we obtain 
$\Hom^i(j^*j_*\cS,\cS)=0$ except for $i=0, 2$, for which it is equal to $\mathbb{C}$.
\end{proof}

\begin{remark}
  Note that by \cref{lem:excep} the objects $j_* \cO_Q(k)$ are exceptional, so the mutation functor $\mathbb{R}_{j_* \cO_Q(k)}$ is well defined. The same remark holds for $j_*\cS''$ when $\dim(Y)$ is odd.
\end{remark}

\begin{lemma}\label{lem:R-mutate-odd}
If $\dim(Y)$ is even, we have the isomorphisms
\begin{equation*}
\mathbb{R}_{j_* \cO_Q(k)} (j_* \cS) =j_* \cS
\end{equation*}
for $2-\dim(Y) \le k \le -1$. Moreover, for all $k \in \mathbb{Z}$, we have
\begin{equation*}
\mathbb{R}_{j_* \cO_Q(k)} (j_* \cS(k)) =j_* \cS(k+1)[-1].
\end{equation*}
\end{lemma}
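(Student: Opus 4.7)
The plan is to compute the relevant Hom groups via adjunction and the restriction triangle
\[
j^*j_*\cS\to\cS\to\cS(1)[2]
\]
(already used in the proof of \cref{lem:excep}), and then read off the mutations from the defining triangle $\mathbb{R}_{\Eca}(\Fca)\to\Fca\to\Hom^\bullet(\Fca,\Eca)^\vee\otimes\Eca$. Throughout, set $n=\dim(Q)=\dim(Y)-1$; since $\dim(Y)$ is even by hypothesis, $n=2m+1$ is odd, and the odd-quadric identity $\cS^\vee=\cS(1)$ from \cref{thm:qua}(d) will be used freely.

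For part (1), the aim is to show that $\Hom^\bullet(j_*\cS,j_*\cO_Q(k))$ vanishes for $k\in[1-n,-1]=[2-\dim(Y),-1]$, which by the mutation triangle forces $\mathbb{R}_{j_*\cO_Q(k)}(j_*\cS)=j_*\cS$. Applying adjunction and then $\Hom^\bullet(-,\cO_Q(k))$ to the restriction triangle, this Hom group sits between
\[
\Hom^\bullet(\cS,\cO_Q(k))=\Ho^\bullet(Q,\cS(k+1))\quad\text{and}\quad\Hom^\bullet(\cS(1)[2],\cO_Q(k))=\Ho^\bullet(Q,\cS(k))[-2].
\]
In the range $k\in[1-n,-1]$, both $k$ and $k+1$ lie in $[1-n,0]$, so both cohomology groups vanish by \cref{rem:twist-vanish}. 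This settles part (1).

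For part (2), the key observation is that tensoring by the line bundle $\cO_{\tY}(-kQ)$ is an autoequivalence of $\Db(\tY)$ that sends $j_*\cO_Q$ to $j_*\cO_Q(k)$ and $j_*\cS$ to $j_*\cS(k)$ (via the projection formula, using $\cO_Q(k)=j^*\cO_{\tY}(-kQ)$), and that commutes with right mutation in the sense $\mathbb{R}_{\Phi(\Eca)}\circ\Phi=\Phi\circ\mathbb{R}_{\Eca}$. Hence it suffices to handle the case $k=0$, namely $\mathbb{R}_{j_*\cO_Q}(j_*\cS)=j_*\cS(1)[-1]$. The same computation as in part (1) at $k=0$ yields $\Hom^\bullet(j_*\cS,j_*\cO_Q)=\Ho^0(Q,\cS(1))=\mathbb{C}^{2^{m+1}}$, concentrated in degree $0$ by \cref{thm:qua}(c) (together with the vanishing of $\Ho^\bullet(Q,\cS)$ from \cref{rem:twist-vanish}).

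Finally, to identify the mutation cone, push forward the tautological sequence $0\to\cS\to\cO_Q^{\oplus 2^{m+1}}\to\cS(1)\to 0$ from \cref{thm:qua}(d) along $j$, producing the triangle
\[
j_*\cS\to (j_*\cO_Q)^{\oplus 2^{m+1}}\to j_*\cS(1).
\]
The only nontrivial point, and the main expected obstacle, is to identify the first map above with the evaluation map appearing in the defining triangle of $\mathbb{R}_{j_*\cO_Q}(j_*\cS)$: this holds because, under the canonical identification $\mathbb{C}^{2^{m+1}}=\Hom(j_*\cS,j_*\cO_Q)$, both maps correspond to the identity element of $\End(\Hom(j_*\cS,j_*\cO_Q))$ in the natural decomposition $\Hom(j_*\cS,V\otimes j_*\cO_Q)=V\otimes\Hom(j_*\cS,j_*\cO_Q)$. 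Consequently $\mathbb{R}_{j_*\cO_Q}(j_*\cS)$ is the fiber of this map, namely $j_*\cS(1)[-1]$; twisting back by $\cO_{\tY}(-kQ)$ gives $\mathbb{R}_{j_*\cO_Q(k)}(j_*\cS(k))=j_*\cS(k+1)[-1]$ for all $k\in\mathbb{Z}$.
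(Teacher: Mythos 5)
Your proposal is correct and follows essentially the same route as the paper: part (1) is identical, using the restriction triangle $j^*j_*\cS\to\cS\to\cS(1)[2]$ together with $\cS^\vee=\cS(1)$ and \cref{rem:twist-vanish}; and in part (2) both arguments obtain the candidate mutation triangle by pushing forward the tautological spinor sequence. The one genuine difference is how that triangle is certified to be the mutation triangle. You identify the middle arrow $j_*\cS\to(j_*\cO_Q)^{\oplus 2^{m+1}}$ with the coevaluation map, invoking that $j_*\colon\Hom(\cS,\cO_Q)\to\Hom(j_*\cS,j_*\cO_Q)$ is an isomorphism and that $j_*$ of a coevaluation is again a coevaluation under this identification — this is true, but the step is asserted rather loosely, and its validity quietly rests on the fact that $\Hom^\bullet(j_*\cS,j_*\cO_Q)$ is concentrated in degree $0$, for which you still owe a word about $\Ho^{\dim Q}(Q,\cS(1))=0$ (it follows from \cref{rem:twist-vanish}, which you cite only for $\Ho^\bullet(Q,\cS)$). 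The paper avoids all of this: after exhibiting the triangle $j_*\cS(k+1)[-1]\to j_*\cS(k)\to j_*\cO_Q(k)^{\oplus 2^{m+1}}$, it simply checks that $\Hom^\bullet(j_*\cS(k+1),j_*\cO_Q(k))=0$ (an instance of \eqref{eq:push-pull-vanish}), so that the left term lies in ${}^\perp\langle j_*\cO_Q(k)\rangle$ and the triangle is the semiorthogonal decomposition triangle by uniqueness, hence the mutation triangle. That orthogonality check is shorter, avoids pinning down the map up to isomorphism, and requires no cohomological input beyond what is already used in part (1); you might prefer it. Your reduction to $k=0$ via twisting by $\cO_{\tY}(-kQ)$ is a harmless cosmetic simplification that the paper does not bother with.
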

\begin{proof}
Again, we use the exact triangle on $Q$
\begin{equation}\label{eq:restrict-sequence1}
  j^*j_*\cS\rightarrow \cS\rightarrow \cS(1)[2].
\end{equation}
We prove the first isomorphism. By adjunction, $\Hom^\bullet(j_*\cS,j_*\cO_Q(k))=\Hom^\bullet(j^*j_*\cS,\cO_Q(k))$.
Applying $\Hom^\bullet(-,\cO_Q(k))$ to \eqref{eq:restrict-sequence1} we get the exact triangle
\[
  \Hom^\bullet(j^*j_*\cS,\cO_Q(k))\leftarrow \Hom^\bullet(\cS,\cO_Q(k))\leftarrow  \Hom^\bullet(\cS(1)[2],\cO_Q(k)).
\]
Now we have by \cref{thm:qua}(d) that
\begin{align*}
\Hom^\bullet(\cS,\cO_Q(k))&=\Ho^\bullet(Q,\cS^{\vee}(k))=\Ho^\bullet(Q,\cS(k+1))\quad \text{and}\\ \Hom^\bullet(\cS(1),\cO_Q(k))&=\Ho^\bullet(Q,\cS^{\vee}(k-1))=\Ho^\bullet(Q,\cS(k)).
\end{align*}
Since $2-\dim(Y) \le k \le -1$, both these terms vanish by \cref{rem:twist-vanish}, thus
\begin{equation}\label{eq:push-pull-vanish}
    \Hom^\bullet(j^*j_*\cS,\cO_Q(k))=0 \quad \textrm{for} \quad 2-\dim(Y) \le k \le -1.
\end{equation}
 We conclude that
\[
\mathbb{R}_{j_* \cO_Q(k)} (j_* \cS) =j_* \cS.
\]

We prove now the second isomorphism. Let $2m+1=\dim(Y)-1$. 
Twisting the exact sequence \eqref{eq:taut-odd} by $\cO_Q(k)$ and taking the pushforward along $j$, we get the exact triangle
\begin{equation}\label{eq:mutation-triangle-new}
    j_*\cS(k+1)[-1]\rightarrow j_*\cS(k)\rightarrow j_*\cO_{Q}(k)^{\oplus 2^{m+1}}.
\end{equation}
Since $\Hom^\bullet(j_*\cS(k+1),j_*\cO_{Q}(k))=\Hom^\bullet(j^*j_*\cS(1),\cO_Q)=0$ by \eqref{eq:push-pull-vanish}, this is a mutation triangle. 
This immediately implies the statement.
\end{proof}

\begin{lemma}\label{lem:R-mutate-even}
If $\dim(Y)$ is odd, we have the isomorphisms
\begin{equation*}
\mathbb{R}_{j_* \cO_Q(k)} (j_* \cS') =j_* \cS'
\end{equation*}
for $2-\dim(Y) \le k \le -1$. Moreover, for all $k \in \mathbb{Z}$, we have
\begin{align*}
    \mathbb{R}_{j_* \cO_Q(k)} (j_* \cS'(k)) &=j_* \cS''(k+1)[-1],\\
    \mathbb{R}_{j_* \cO_Q(k)} (j_* \cS''(k)) &=j_* \cS'(k+1)[-1].
\end{align*}
Finally, we have that
\begin{equation*}
\mathbb{R}_{j_* \cS''} (j_* \cS') =\cone(j_*\cS'\rightarrow j_*\cS''[2])[-1].
\end{equation*}
\end{lemma}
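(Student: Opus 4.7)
The plan is to mirror the strategy already used in \cref{lem:R-mutate-odd} and \cref{lem:excep}, adapted to the situation where the exceptional divisor $Q$ has even dimension $n=\dim(Y)-1$, so that two spinor bundles $\cS',\cS''$ are present and their dualities depend on $n \bmod 4$ by \cref{thm:qua}(e). The whole calculation rests on two tools: adjunction combined with the restriction triangle
\[
 j^*j_*\cF \to \cF \to \cF(-Q)[2]=\cF(1)[2]
\]
on $Q$ (which one obtains from the Koszul resolution of $j_*\cO_Q$), and the cohomology vanishings from \cref{thm:qua}(c), \cref{rem:twist-vanish}, and the dualities in \cref{thm:qua}(e).

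For the first isomorphism, I would apply $\Hom^\bullet(-,\cO_Q(k))$ to the restriction triangle for $\cS'$, reducing the vanishing of $\Hom^\bullet(j_*\cS',j_*\cO_Q(k))=\Hom^\bullet(j^*j_*\cS',\cO_Q(k))$ to that of $\Ho^\bullet(Q,(\cS'(i))^\vee(k))$ for $i=0,1$. By \cref{thm:qua}(e) these cohomology groups become $\Ho^\bullet(Q,\cS^{\star}(k+1-i))$ with $\cS^{\star}\in\{\cS',\cS''\}$, and both fall in the vanishing range of \cref{rem:twist-vanish} when $2-\dim(Y)\le k\le -1$; thus $\mathbb{R}_{j_*\cO_Q(k)}(j_*\cS')=j_*\cS'$.

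For the two mutation formulas, I would tensor the sequences in \eqref{eq:tautological-odd-spinor} by $\cO_Q(k)$ and push forward along $j$, obtaining triangles of the shape
\[
 j_*\cS''(k+1)[-1]\to j_*\cS'(k)\to j_*\cO_Q(k)^{\oplus 2^m}
\]
and its $\cS'\leftrightarrow\cS''$ analogue. These are mutation triangles provided $\Hom^\bullet(j_*\cS''(k+1),j_*\cO_Q(k))$ and its twin vanish; by adjunction this $\Hom^\bullet$ is controlled by $\Hom^\bullet(j^*j_*\cS'',\cO_Q(-1))$, which via the restriction triangle reduces to the vanishing of $\Ho^\bullet(Q,\cS^{\star}(-1))$ and $\Ho^\bullet(Q,\cS^{\star})$ for some spinor bundle $\cS^{\star}$; these are covered by \cref{thm:qua}(c) and \cref{rem:twist-vanish}. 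This yields the two claimed formulas at once.

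The third statement is essentially definitional: by the explicit formula for right mutation through an exceptional object,
\[
 \mathbb{R}_{j_*\cS''}(j_*\cS')=\cone\bigl(j_*\cS'\to\Hom^\bullet(j_*\cS',j_*\cS'')^\vee\otimes j_*\cS''\bigr)[-1],
\]
and from \eqref{eq:push-S''-S'} in \cref{lem:excep} we have $\Hom^\bullet(j_*\cS',j_*\cS'')=\bC[-2]$, whose dual is $\bC[2]$; substitution yields the claim. The main care-point in the whole argument is bookkeeping the spinor duality cases according to $n\bmod 4$, but since every relevant cohomology ultimately collapses to a uniform instance of \cref{rem:twist-vanish}, no case-by-case analysis is actually needed in the final computation.
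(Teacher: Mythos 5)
Your proof is correct and matches the paper's approach exactly: the paper proves the first three isomorphisms ``in the same way as the previous lemma'' (\cref{lem:R-mutate-odd}), which is precisely your adaptation of the restriction-triangle argument and the pushforward of the twisted sequences \eqref{eq:tautological-odd-spinor}, and obtains the last one directly from \eqref{eq:push-S''-S'} and the explicit formula for right mutation through an exceptional object, as you do. Your remark that the $n\bmod 4$ case distinction from \cref{thm:qua}(e) is immaterial because every needed vanishing reduces uniformly to \cref{rem:twist-vanish} correctly spells out a point the paper leaves implicit.
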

\begin{proof}
The first three isomorphisms are proven in the same way as the previous lemma. The last one follows immediately from \eqref{eq:push-S''-S'}.
\end{proof}

We can now come to the study of the categorical resolution of $\Db(Y)$, where $Y$ is the nodal variety from the beginning of this section.

\begin{proposition} \label{prop:catres_part0}
With the notation introduced at the beginning of this section and in \cref{thm:qsod}, set 
\[
  \widetilde{\cD}\coloneqq\{\Fca\in \Db(\widetilde{Y}) \mid j^*\Fca\in \cB_0\}. 
\]
Let $\sigma_*\colon \widetilde{\cD}\rightarrow \Db(Y)$ denote the restriction of the pushforward functor.
Then the pullback functor $\sigma^*\colon \Dperf(Y)\rightarrow \Db(\widetilde{Y})$ factors as $\sigma^*\colon  \Dperf(Y)\rightarrow \widetilde{\cD}$, and $(\widetilde{\cD}, \sigma_*, \sigma^*)$ is a weakly crepant categorical resolution of $\Db(Y)$.    
\end{proposition}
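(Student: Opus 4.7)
The plan is to apply \cref{thm:ldcr} to the blow-up $\sigma\colon\widetilde Y\to Y$ at the nodal point $y$, taking as dual Lefschetz decomposition the one provided by Kapranov's \cref{thm:qsod}:
\[
\Db(Q)=\langle\cB_{n-2}(2-n),\dots,\cB_1(-1),\cB_0\rangle,
\]
where $n=\dim(Y)$, so that in the notation of \cref{thm:ldcr} we have $m=n-1$. By inspection of \cref{thm:qsod}, $\cO_Q\in\cB_0$ and the last piece satisfies $\cB_{n-2}=\langle\cO_Q\rangle$.

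Next I would verify the hypotheses of \cref{thm:ldcr} for both the categorical-resolution statement and the weak-crepancy refinement. Since a nodal singularity is a hypersurface singularity, $Y$ is both Gorenstein and has rational singularities, and $\sigma$ is a resolution of rational singularities whose exceptional locus is the irreducible divisor $Q$. The image $Z=\{y\}$ is a reduced point and $\rho\colon Q\to Z$ is the constant map, so $\rho^*(\Dperf(Z))$ is the full triangulated subcategory generated by $\cO_Q$; by the previous paragraph this lies in both $\cB_0$ and $\cB_{n-2}=\cB_{m-1}$. It therefore only remains to compute the discrepancy. By adjunction for $Q\subset\widetilde Y$, using $\cN_{Q/\widetilde Y}=\cO_Q(-1)$ (from $\cO_Q(1)=j^*\cO_{\widetilde Y}(-Q)$) and $\omega_Q=\cO_Q(-(n-1))$ (adjunction for $Q\subset\PP^n$), I find
\[
\omega_{\widetilde Y}|_Q=\omega_Q\otimes\cN_{Q/\widetilde Y}^{-1}=\cO_Q(-(n-2)).
\]

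On the other hand $\sigma^*\omega_Y$ is a line bundle on $\widetilde Y$ (as $Y$ is Gorenstein) whose restriction to $Q$ is trivial, since $Q$ contracts to the point $y$ where $\omega_Y$ is locally trivial, and $\sigma$ is an isomorphism outside $Q$. Hence $\omega_{\widetilde Y}\otimes\sigma^*\omega_Y^{-1}$ is a line bundle on $\widetilde Y$ that is trivial on $\widetilde Y\setminus Q$, and is therefore of the form $\cO_{\widetilde Y}(aQ)$ for some integer $a$; matching restrictions to $Q$ gives $a=n-2=m-1$, which is exactly the identity needed in \cref{thm:ldcr} for weak crepancy.

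The step I expect to be the most delicate is the global identification of $\omega_{\widetilde Y}\otimes\sigma^*\omega_Y^{-1}$ with $\cO_{\widetilde Y}(aQ)$: conceptually straightforward, it requires combining the normal-bundle formula on $Q$ with the fact that $\sigma$ is an isomorphism off $Q$. The quasiprojective case (\cref{rem:quasiprojective} to come) requires no substantive change, because every ingredient used above is either local around $y$ (Gorensteinness, rationality of the singularity, the canonical bundle computation) or valid for smooth quasiprojective varieties (Kapranov's decomposition and \cref{thm:ldcr}).
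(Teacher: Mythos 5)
Your proof is correct and follows essentially the same route as the paper's: apply \cref{thm:ldcr} to the blow-up at the node, use Kapranov's dual Lefschetz decomposition of $\Db(Q)$ from \cref{thm:qsod}, check that $\rho^*\Dperf(\{y\})=\langle\cO_Q\rangle$ sits in both $\cB_0$ and $\cB_{m-1}$, and compute the discrepancy via adjunction. Your index convention ($n=\dim Y$) differs from the paper's ($n=\dim Q=\dim Y-1$), but the computation $\omega_{\widetilde Y}|_Q=\cO_Q(2-\dim Y)$ and the conclusion $a=m-1$ agree. One small point worth noting: when you deduce $a=n-2$ from $\cO_Q(-a)=\cO_Q(-(n-2))$, you are implicitly using that $\cO_Q(1)$ has infinite order in $\Pic(Q)$; the paper makes this explicit by invoking the torsion-freeness of $\Pic(Q)$.

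One slip in the justification: you assert that $Y$ has rational singularities \emph{because} the node is a hypersurface singularity. Being a hypersurface singularity gives Gorenstein, but not rational singularities in general (cones over curves of genus $\ge 1$, for example, are hypersurface or complete-intersection singularities that are not rational). The correct justification, used elsewhere in the paper, is that the node is formally a cone over a smooth quadric, hence a cone over a Fano variety, which has rational singularities by \cite[Corollary~3.4]{kollar2013singularities} (cited in the proof of \cref{Cor:ker}); alternatively, your own discrepancy computation gives $a=n-2\ge 0$, so the singularity is canonical, and Gorenstein canonical implies rational by Elkik's theorem. The conclusion is right, so this doesn't affect the validity of the argument, but the stated reason should be fixed.
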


\begin{proof}
Set $n\coloneqq\dim(Y)-1$. Recall the dual Lefschetz decomposition of $\Db(Q)$ introduced in \cref{thm:qsod}
\begin{equation*}
    \Db(Q)=\langle \cB_{n - 1}(1- n),\dots ,\cB_1(-1), \cB_0 \rangle,
\end{equation*}
where
\begin{enumerate}
    \item  $\cB_0=\langle \cS, \cO_Q\rangle$ and $\cB_i=\langle \cO_Q\rangle$ for $1 \le i \le n-1$, if $Y$ is even dimensional,
    \item  $\cB_0=\cB_1=\langle \cS', \cO_Q\rangle$ and $\cB_i=\langle \cO_Q\rangle$ for $2 \le i \le n-1$, if $Y$ is odd dimensional.
\end{enumerate}
Denote by $\rho$ the restriction of $\sigma$ to $Q$; the image of $\rho$ consists of the singular point $y$ of $Y$. Since $\Dperf(y)=\langle \cO_{y} \rangle$, we have $\rho^* \Dperf(y)= \langle \rho^* \cO_{y} \rangle= \langle \cO_Q \rangle \subset \cB_0$.
In fact, $\rho^*\Dperf(y)=\langle \cO_Q\rangle \subset \cB_i$ for all $i$. 
Recall that a variety with nodal singularities is Gorenstein, as discussed in \cref{subsec:nodalsing}. We now compute the discrepancy of the exceptional divisor $Q$.
As $\sigma$ is an isomorphism outside of $Q$, we have $ \Canonical_{\widetilde{Y}}= \sigma^* \Canonical_Y \otimes \cO(k Q)$ for some $k \in \mathbb{Z}$. By the adjunction formula and \cref{lem:canonical bundle quadric} we have that
\begin{align*}
    \cO_{Q}(-n)=\Canonical_{Q} = (\Canonical_{\widetilde{Y}}\otimes \cO(Q))\vert_{Q}
    =(\sigma^* \Canonical_Y \otimes \cO((k+1) Q))\vert_{Q}=\cO_{Q}(-k-1).
\end{align*}
As $\Pic(Q)$ is torsion free, \cf\cite[Exercise~II.6.5c]{Rag}, this implies $k=n-1$.
Then the triple $(\widetilde{\cD},\sigma_*,\sigma^*)$ defined in the proposition is a weakly crepant categorical resolution by \cref{thm:ldcr}.    
\end{proof}

Next, we compute the kernel of the categorical resolution from \cref{prop:catres_part0}. For the rest of this section, we will use $\sigma_*$ to denote the pushforward functor $\Db(\tY)\rightarrow \Db(Y)$, and not its restriction to $\widetilde{\cD}$.

\begin{proposition}\label{prop:catresker}
The kernel $\ker(\sigma_*)\cap \widetilde \cD$ of the weakly crepant categorical resolution $\widetilde{\cD}$ of $\Db(Y)$ is classically generated by a single object $\Tca$, where $\Tca=j_* \cS$ if $\dim(Y)$ is even, and $\Tca=\mathbb{R}_{j_* \cS''} (j_* \cS'[1])=\cone(j_*\cS'\rightarrow j_*\cS''[2])$ if $\dim(Y)$ is odd.
\end{proposition}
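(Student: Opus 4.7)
By \cref{Cor:ker} together with \cref{rmk:nodalcase}, the kernel $\ker(\sigma_*)\subset\Db(\tY)$ is classically generated by $j_*(\langle\cO_Q\rangle^\perp)$. To make the generators of $\langle\cO_Q\rangle^\perp$ explicit, I start from the standard Kapranov SOD of $\Db(Q)$ (as used in the proof of \cref{thm:qsod}) and repeatedly apply the Serre functor trick \cref{lemma:serre1}(a) to move $\cO_Q$ to the rightmost position of the SOD. This yields
\[\Db(Q)=\langle\cO_Q(1-n),\ldots,\cO_Q(-1),\cS,\cO_Q\rangle\]
when $n=\dim Y-1$ is odd (the even-$\dim Y$ case), and
\[\Db(Q)=\langle\cO_Q(1-n),\ldots,\cO_Q(-1),\cS',\cS'',\cO_Q\rangle\]
when $n$ is even (odd $\dim Y$). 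Therefore $\ker(\sigma_*)$ is classically generated by $j_*\cO_Q(-k)$ for $1\le k\le n-1$, together with $j_*\cS$ (respectively $j_*\cS'$ and $j_*\cS''$).

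A crucial observation is that every left component of the SOD $\Db(\tY)=\langle\ldots,\widetilde{\cD}\rangle$ from \cref{prop:D-tilde-SOD,thm:qsod} already lies in $\ker(\sigma_*)$: the pushforwards $\sigma_*j_*\cO_Q(-k)$ vanish by \cref{lem:canonical bundle quadric}, and in the odd case $\sigma_*j_*\cS'(-1)=0$ by \cref{rem:twist-vanish}. Consequently the projection $\pi\colon\Db(\tY)\twoheadrightarrow\widetilde{\cD}$ (left adjoint to the inclusion) carries $\ker(\sigma_*)$ onto $\ker(\sigma_*)\cap\widetilde{\cD}$ and sends classical generating sets to classical generating sets. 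All the generators $j_*\cO_Q(-k)$ (and, in the odd case, $j_*\cS'(-1)$) are in the left components and are killed by $\pi$.

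For even $\dim Y$, the first identity in \cref{lem:R-mutate-odd} yields $\Hom^\bullet(j_*\cS,j_*\cO_Q(k))=0$ for $1-n\le k\le -1$, so $j_*\cS\in\widetilde{\cD}$ and $\pi(j_*\cS)=j_*\cS$. Hence $\ker(\sigma_*)\cap\widetilde{\cD}$ is classically generated by $\Tca=j_*\cS$.

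For odd $\dim Y$, I define $\Tca=\cone(j_*\cS'\to j_*\cS''[2])$ from the canonical generator of $\Hom^\bullet(j_*\cS',j_*\cS'')=\CC[-2]$ in~\eqref{eq:push-S''-S'}. First, $\Tca\in\ker(\sigma_*)$ is immediate since $\sigma_*j_*\cS'=\sigma_*j_*\cS''=0$ by \cref{thm:qua}(c). Next—the main technical step—one verifies $\Tca\in\widetilde{\cD}$ by computing $j^*\Tca$ using the splitting $j^*j_*\cF=\cF\oplus\cF(1)[1]$ and the tautological sequences~\eqref{eq:tautological-odd-spinor}, and checking that the resulting object lies in $\cB_0=\langle\cS',\cO_Q\rangle$. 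The identity $\mathbb{R}_{j_*\cO_Q(-1)}(j_*\cS'(-1))=j_*\cS''[-1]$ from \cref{lem:R-mutate-even} yields a triangle exhibiting $j_*\cS''$ inside the triangulated subcategory generated by the two left components $j_*\cS'(-1)$ and $j_*\cO_Q(-1)$. Thus $\pi(j_*\cS'')=0$; applying $\pi$ to the defining triangle $j_*\cS'\to j_*\cS''[2]\to\Tca$ then gives $\pi(j_*\cS')\to 0\to\Tca$, forcing $\pi(j_*\cS')=\Tca[-1]$. Therefore $\ker(\sigma_*)\cap\widetilde{\cD}$ is classically generated by $\Tca[-1]$, hence by $\Tca$. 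The hardest step is verifying $\Tca\in\widetilde{\cD}$, which requires tracking both spinor sequences through the direct sum decomposition of $j^*j_*(-)$ and identifying the cone inside $\cB_0$.
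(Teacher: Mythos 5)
Your overall strategy matches the paper's: use \cref{Cor:ker} and \cref{rmk:nodalcase} to identify the generators of $\ker(\sigma_*)$, observe that $\widetilde{\cD}^\perp \subset \ker(\sigma_*)$, and project onto $\widetilde{\cD}$ (which is precisely the right mutation $\mathbb{R}_{\widetilde{\cD}^\perp}$). The even-dimensional case is handled exactly as in the paper, and your alternative SOD of $\langle \cO_Q\rangle^\perp$ with both spinors at the end, $\langle\ldots,\cO_Q(-1),\cS',\cS''\rangle$, is a valid rearrangement.

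In the odd-dimensional case, however, there is a genuine gap. You apply $\pi$ to the triangle $j_*\cS'\to j_*\cS''[2]\to\Tca$ and write ``$\pi(j_*\cS')\to 0\to\Tca$,'' silently replacing $\pi(\Tca)$ by $\Tca$. This step requires $\Tca\in\widetilde{\cD}$, which you assert but do not prove; without it you only obtain $\pi(j_*\cS')\simeq\pi(\Tca)[-1]$, which does not yet identify $\Tca$ as the generator. You propose to verify $\Tca\in\widetilde{\cD}$ by computing $j^*\Tca$ via the canonical splitting $j^*j_*\Fca\simeq\Fca\oplus\Fca(1)[1]$ and checking membership in $\cB_0=\langle\cS',\cO_Q\rangle$, but you explicitly flag this as ``the hardest step'' and leave it undone — and it is indeed non-trivial, since the map $j^*(j_*\cS'\to j_*\cS''[2])$ has off-diagonal components through $\cS'(1)$ and $\cS''(1)$, neither of which lies in $\cB_0$, so one must carefully see how they cancel in the cone using the sequences \eqref{eq:tautological-odd-spinor}. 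The paper avoids this entirely: it re-expresses $\widetilde{\cD}^\perp=\langle j_*\cO_Q(1-n),\ldots,j_*\cO_Q(-1),j_*\cS''\rangle$ via \cref{lem:R-mutate-even}, then computes $\mathbb{R}_{\widetilde{\cD}^\perp}(j_*\cS')$ as the composite of trivial mutations through the $j_*\cO_Q(k)$ followed by $\mathbb{R}_{j_*\cS''}$, obtaining $\Tca[-1]$ directly. Since $\mathbb{R}_{\widetilde{\cD}^\perp}$ lands in $\widetilde{\cD}={}^\perp(\widetilde{\cD}^\perp)$ by construction, the membership $\Tca\in\widetilde{\cD}$ comes for free, closing your gap without any computation of $j^*\Tca$. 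Finally, your phrase ``sends classical generating sets to classical generating sets'' needs the idempotent-completion argument that the paper gives (comparing $\mathscr{K}^\oplus$ with $\langle\mathbb{R}_{\widetilde{\cD}^\perp}(j_*\cS')\rangle^\oplus$) to be made precise.
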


\begin{proof}
Set $n\coloneqq\dim(Y)-1$. Note that the conditions of \cref{thm:efimov} are satisfied in our situation as explained in \cref{Cor:ker} and \cref{rmk:nodalcase}. This gives that $\sigma_*:\Db(\tY)\rightarrow \Db(Y)$ is a localization functor up to direct summands, and its kernel is classically generated by $\mathscr{K}\coloneqq \langle j_*(\langle \cO_Q \rangle^\perp) \rangle$, that is, $\ker(\sigma_*) = \mathscr{K}^{\oplus}$ is the idempotent completion of $\mathscr{K}$. We now determine $\ker(\sigma_*) \cap \widetilde{\cD}$. 

On the one hand, $\langle \cO_Q \rangle^\perp$ admits by \cref{thm:qsod} a semiorthogonal decomposition of the form
\begin{enumerate}
    \item  $\langle \cO_Q \rangle^\perp=\langle \cO_Q(1-n),\dots, \cO_Q(-1), \cS \rangle$ if $Y$ is even dimensional,
    \item $\langle \cO_Q \rangle^\perp=\langle \cO_Q(1-n),\dots,\cS'(-1), \cO_Q(-1), \cS' \rangle$ if $Y$ is odd dimensional,
\end{enumerate}
thus the pushforwards of the components along $j$ are a set of generators of $\mathscr{K}$. 

On the other hand, the semiorthogonal decomposition
\begin{equation}\label{eq:toeliminateSOD}\Db(\widetilde{Y})=\langle j_* \cB_{n -1}(1- n), \dots, j_* \cB_1(-1), \widetilde{\cD} \rangle
\end{equation}
induced by \eqref{eq:Lefschetz-quadric} and the fully faithfulness of $j_*$ on $\cB_i(-i)$ for $1 \le i \le n-1$ show that
\begin{enumerate}
    \item $\lbrace j_* \cO_{Q}(1- n), \dots, j_* \cO_Q(-1)
    \rbrace$ if $Y$ is even dimensional,
    \item $\lbrace j_* \cO_{Q}(1- n), \dots, j_* \cS'(-1), j_* \cO_Q(-1)
    \rbrace$ if $Y$ is odd dimensional
\end{enumerate}
are full exceptional collections of $\widetilde\cD^\perp$. 

Now, looking at the generators of $\widetilde{\cD}^\perp$ and $\mathscr{K}$, we obtain $\widetilde{\cD}^\perp \subset \mathscr{K} \subset \ker(\sigma_*)$, which implies that
\begin{equation*}
         \ker (\sigma_*)\cap \widetilde{\cD} = \mathbb{R}_{\widetilde{\cD}^\perp} \ker (\sigma_*).
\end{equation*}
We first assume that $Y$ is even dimensional. Notice that all the generators of $\mathscr{K}$ belong to $\widetilde{\cD}^\perp$ except the pushforward of the spinor bundle, so that $\mathbb{R}_{\widetilde\cD^\perp} \mathscr{K} = \langle \mathbb{R}_{\widetilde\cD^\perp}(j_* \cS) \rangle$.
Since $\mathbb{R}_{\widetilde\cD^\perp} (\mathscr{K}^{\oplus })\subset (\mathbb{R}_{\widetilde\cD^\perp} \mathscr{K})^{\oplus}$, we have the inclusions 
\[
\mathbb{R}_{\widetilde\cD^\perp} (j_*\cS) \subset \mathbb{R}_{\widetilde\cD^\perp} (\ker(\sigma_*))\subset \langle \mathbb{R}_{\widetilde\cD^\perp}(j_* \cS) \rangle^{\oplus}.
\]
Now, as both $\ker (\sigma_*)$ and $\widetilde{\cD}$ are idempotent complete, so is their intersection $\mathbb{R}_{\widetilde\cD^\perp} (\ker(\sigma_*))$.
Thus $\mathbb{R}_{\widetilde\cD^\perp} \ker(\sigma_*) = \langle \mathbb{R}_{\widetilde\cD^\perp}(j_* \cS) \rangle^{\oplus}$. The same argument shows that $\mathbb{R}_{\widetilde\cD^\perp} \ker(\sigma_*) = \langle \mathbb{R}_{\widetilde\cD^\perp}(j_* \cS') \rangle^{\oplus}$ when $Y$ is odd dimensional.

To conclude, it suffices to compute the mutations of the spinor bundles through $\widetilde\cD^\perp$.
When~$Y$ is even dimensional, we have by \cref{mutations-SOD} and \cref{lem:R-mutate-odd} 
\[
\mathbb{R}_{\widetilde\cD^\perp}(j_* \cS)=(\mathbb{R}_{j_*\cO_Q(-1)}\circ \dots \circ \mathbb{R}_{j_*\cO_Q(1-n)})(j_*\cS) = j_*\cS.
\]

When $Y$ is odd dimensional, we consider the exceptional collection of $\widetilde\cD^\perp$ obtained by mutating $j_* \cS'(-1)$ through $j_*\cO_Q(-1)$. Since $\mathbb{R}_{j_* \cO_Q(-1)} (j_* \cS'(-1))=j_*\cS''[-1]$ by \cref{lem:R-mutate-even}, we have
\begin{equation}\label{eq:SOD-R-mutated}
\widetilde\cD^\perp = \langle j_* \cO_{Q}(1- n), \dots, j_* \cO_Q(-1), j_*\cS''\rangle.
\end{equation}
Using \cref{mutations-SOD} and \cref{lem:R-mutate-even} we obtain
\[
\mathbb{R}_{\widetilde\cD^\perp}(j_* \cS')=(\mathbb{R}_{j_* \cS''}\circ \mathbb{R}_{j_*\cO_Q(-1)}\circ \dots \circ \mathbb{R}_{j_*\cO_Q(1-n)})(j_*\cS')=\mathbb{R}_{j_* \cS''}(j_* \cS'),
\]
and also $\mathbb{R}_{j_* \cS''}(j_* \cS')=\cone(j_*\cS'\rightarrow j_*\cS''[2])[-1]$.

These computations yield the desired classical generator of $\ker(\sigma_*) \cap \widetilde{\cD}$ in the even and odd dimensional case.
\end{proof}

Now let $\Tca=j_*\cS$ or $\Tca=\cone(j_*\cS'\rightarrow j_*\cS''[2])$, depending on the parity of the dimension of~$Y$.

\begin{proposition}\label{prop:sperical-odd}
If $Y$ is even dimensional, 
then $\Tca$ is a $2$-spherical object in $\widetilde{\cD}$.
\end{proposition}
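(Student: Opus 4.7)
The plan is to verify the three conditions defining a $2$-spherical object in the sense of Definition~\ref{def:spherical with Serre} for $\Tca=j_*\cS$, regarded as an object of $\widetilde{\cD}$. Condition~(b), $\Hom^\bullet(\Tca,\Tca)=\mathbb{C}\oplus\mathbb{C}[-2]$, is exactly~\eqref{eq:push-hom(S-S)} in Lemma~\ref{lem:excep}. Condition~(a), Hom-finiteness, holds because $\Tca$ is supported on the smooth projective divisor $Q$ and hence lies in $\Db_Q(\tY)$, for which Example~\ref{ex:serre quasiproj} guarantees the finiteness.

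The bulk of the proof is condition~(c). By the remark following Lemma~\ref{lemma:Serre pair}, it suffices to produce a Serre functor for a pair $(\cR,\widetilde{\cD})$ with $\Tca\in\cR$, and to check that it sends $\Tca$ to $\Tca[2]$. I intend to apply Lemma~\ref{lemma:Serre pair} with $\cT=\Db(\tY)$, $\cR=\Db_Q(\tY)$, and $\cA=\widetilde{\cD}^\perp$: the subcategory $\widetilde{\cD}^\perp$ lies in $\Db_Q(\tY)$ since its generators $j_*\cO_Q(k)$ are supported on $Q$, and is admissible in both $\Db(\tY)$ and $\Db_Q(\tY)$ as it is generated by a full exceptional collection. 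Combined with the Serre functor $\mathbb{T}_{\omega_{\tY}}\circ[\dim Y]$ supplied by Example~\ref{ex:serre quasiproj}, the lemma yields a Serre functor
\[
  \Serre_{\widetilde{\cD}}=\mathbb{R}_{\widetilde{\cD}^\perp}\circ\mathbb{T}_{\omega_{\tY}}\circ[\dim Y]
\]
for the pair $(\Db_Q(\tY)\cap\widetilde{\cD},\widetilde{\cD})$, and the smaller category contains $\Tca$.

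It remains to evaluate $\Serre_{\widetilde{\cD}}(j_*\cS)$. The projection formula together with $\omega_Q=j^*\omega_{\tY}\otimes\cN_{Q/\tY}$ and $\cN_{Q/\tY}^\vee=\cO_Q(1)$ give $j^*\omega_{\tY}=\cO_Q(2-d)$, where $d=\dim Y$, hence $\mathbb{T}_{\omega_{\tY}}(j_*\cS)[d]=j_*\cS(2-d)[d]$. Since $d$ is even and $n=\dim Q=d-1$, the semiorthogonal decomposition $\widetilde{\cD}^\perp=\langle j_*\cO_Q(1-n),\dots,j_*\cO_Q(-1)\rangle$ extracted from Proposition~\ref{prop:catresker}, together with Lemma~\ref{mutations-SOD}, expresses $\mathbb{R}_{\widetilde{\cD}^\perp}$ as the $(n-1)$-fold composite of right mutations through the individual line bundles. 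Each factor $\mathbb{R}_{j_*\cO_Q(k)}$ sends $j_*\cS(k)$ to $j_*\cS(k+1)[-1]$ by Lemma~\ref{lem:R-mutate-odd}, so iterating for $k=1-n,2-n,\dots,-1$ produces $\mathbb{R}_{\widetilde{\cD}^\perp}(j_*\cS(1-n))=j_*\cS[1-n]$. Combining with the initial shift by $[d]$ yields $\Serre_{\widetilde{\cD}}(j_*\cS)=j_*\cS[2]$, as required.

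The main obstacle I anticipate is bookkeeping rather than conceptual: one has to track the ordering of right mutations so that Lemma~\ref{mutations-SOD} applies to $\widetilde{\cD}^\perp$ in the correct sense, and the cohomological shifts that accumulate at each step. A small preliminary check is also required to confirm that $j_*\cS$ actually belongs to $\widetilde{\cD}$: by the standard triangle $\cS(1)[1]\to j^*j_*\cS\to\cS$ it suffices that $\cS(1)\in\cB_0=\langle\cS,\cO_Q\rangle$, and this follows from the tautological sequence~\eqref{eq:taut-odd} on the odd-dimensional quadric $Q$.
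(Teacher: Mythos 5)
Your proof is correct and takes essentially the same approach as the paper: conditions (a)–(b) follow from \cref{lem:excep}, and condition (c) reduces via the Serre functor of $\widetilde{\cD}$ to the mutation computation $\mathbb{R}_{\widetilde{\cD}^\perp}(j_*\cS(1-n)[n+1])=j_*\cS[2]$ using \cref{lem:R-mutate-odd}. The only (inessential) difference is that you invoke the Serre-functor-for-pairs machinery ($\Db_Q(\tY)$, \cref{ex:serre quasiproj}, \cref{lemma:Serre pair}) from the outset, whereas the paper first treats the projective case using the ordinary Serre functor of $\Db(\tY)$ and defers the quasiprojective adjustment to \cref{rem:quasiprojective}.
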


\begin{proof}
Set $n\coloneqq\dim(Y)-1$. Let us prove that $\Tca=j_*\cS$ satisfies the three conditions of \cref{def:spherical with Serre}. Condition (a) is automatic, since $\tY$ is projective and $\widetilde{\cD}$ is a semiorthogonal component of $\Db(\tY)$ in the decomposition \eqref{eq:toeliminateSOD}. By \cref{lem:excep}, we have that
\begin{equation*}
    \Hom^\bullet(\Tca,\Tca)=\bC \oplus \bC[-2],
\end{equation*}
so condition (b) holds true. It remains to check condition (c). Recall that $\widetilde{\cD}$ has a Serre functor, given by \cref{lemma:serre1}(b); thus, by \cref{rem:spherical with serre}, it is enough to show that $\Serre_{\widetilde{\cD}}(\Tca)=\Tca[2]$. We have that
\[
  \Serre_{\widetilde{\cD}}(j_*\cS)=\mathbb{R}_{\widetilde{\cD}^\perp}  (\Serre_{\Db(\widetilde{Y})} (j_*\cS))= (\mathbb{R}_{j_*\cO_Q(-1)}\circ \dots \circ \mathbb{R}_{j_*\cO_Q(1-n)} \circ \Serre_{\Db(\widetilde{Y})})(j_*\cS),
\]
where $\Serre_{\Db(\widetilde{Y})}=\mathbb{T}_{\Canonical_{\widetilde{Y}}}\circ[n+1]$. Since, by the adjunction formula, we have the equality
\[
  j^*\Canonical_{\widetilde{Y}}=\Canonical_Q\otimes j^*\cO_{\widetilde{Y}}(-Q)= \cO_Q(-\dim(Q) + 1)=\cO_Q(1-n),
\]
we obtain
\[
  \Serre_{\Db(\widetilde{Y})} (j_*\cS) = j_*(\cS\otimes j^*\Canonical_{\widetilde{Y}})[n+1] = j_* \cS(1-n)[n+1].
\]
Now, using \cref{lem:R-mutate-odd}, we have
\begin{equation*}
    \mathbb{R}_{j_*\cO_Q(k)} (j_* \cS(k)[2-k])= j_* \cS(k+1)[2-k-1].
\end{equation*}
Proceeding inductively, we obtain
\[
  \Serre_{\widetilde{\cD}}(j_*\cS)=j_*\cS[2],
\]
proving the statement.
\end{proof}

\begin{proposition}\label{prop:seven}
If $Y$ is odd dimensional, 
then $\Tca$ is a $3$-spherical object in $\widetilde{\cD}$.
\end{proposition}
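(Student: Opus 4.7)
The plan is to verify the three conditions of \cref{def:spherical with Serre} for $k = 3$. Condition~(a) is automatic, since $\widetilde{Y}$ is smooth projective and $\widetilde{\cD}$ is an admissible subcategory of $\Db(\widetilde{Y})$. The main work is in checking conditions~(b) and~(c), for which I would use the defining triangle
\[
  \Tca \to j_*\cS'[1] \to j_*\cS''[3]
\]
of the mutation $\Tca = \bR_{j_*\cS''}(j_*\cS'[1])$, together with \cref{lem:excep,lem:even-quad} and the additional identity $\Hom^\bullet(j_*\cS'', j_*\cS') = \bC[-2]$, which one proves exactly as the analogous statement in \cref{lem:excep} by applying $\Hom^\bullet(-, \cS')$ to the restriction triangle $j^*j_*\cS'' \to \cS'' \to \cS''(1)[2]$ and invoking \cref{lem:even-quad}.

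For condition~(b), the key observation is that $j_*\cS'' \in \widetilde{\cD}^\perp$ by \eqref{eq:SOD-R-mutated}, whereas $\Tca \in \widetilde{\cD}$. Semiorthogonality then forces $\Hom^\bullet(\Tca, j_*\cS''[\ell]) = 0$ for every $\ell$, so applying $\Hom^\bullet(\Tca, -)$ to the defining triangle collapses to $\Hom^\bullet(\Tca, \Tca) \cong \Hom^\bullet(\Tca, j_*\cS'[1])$. Applying now $\Hom^\bullet(-, j_*\cS')$ to the same triangle and feeding in the four spinor-to-spinor Hom groups produces $\Hom^\bullet(\Tca, j_*\cS') = \bC[-1] \oplus \bC[-4]$, hence $\Hom^\bullet(\Tca, \Tca) = \bC \oplus \bC[-3]$.

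For condition~(c), I mimic the strategy of \cref{prop:sperical-odd}. Since $\Tca \in \widetilde{\cD}$, we have $\Serre_{\widetilde{\cD}}(\Tca) = \bR_{\widetilde{\cD}^\perp}(\Serre_{\Db(\widetilde{Y})}(\Tca))$, and $\Serre_{\Db(\widetilde{Y})}$ applied to the defining triangle produces $\cone\bigl(j_*\cS'(1-n)[n+1] \to j_*\cS''(1-n)[n+3]\bigr)$, with $n = \dim Y - 1$ (even in our case). I then mutate step by step through the SOD \eqref{eq:SOD-R-mutated}. The central observation is that \cref{lem:R-mutate-even} shows each mutation $\bR_{j_*\cO_Q(k)}$ swaps the parity of the spinor bundle while increasing the twist by $1$ and decreasing the shift by $1$; after the $n - 1$ mutations through $j_*\cO_Q(1-n), \dots, j_*\cO_Q(-1)$, the parity flips an odd number of times (as $n - 1$ is odd), and we arrive at $\cone\bigl(j_*\cS''[2] \to j_*\cS'[4]\bigr)$. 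The final mutation through $j_*\cS''$ annihilates the first term and sends the second to $\bR_{j_*\cS''}(j_*\cS'[4])$, which equals $\Tca[3]$: this is verified directly from the mutation-triangle formula, using $\Hom^\bullet(j_*\cS'[4], j_*\cS'') = \bC[-6]$ and comparing the resulting cone with $\Tca[3] = \cone\bigl(j_*\cS'[3] \to j_*\cS''[5]\bigr)$.

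The main obstacle is the bookkeeping in condition~(c): carefully tracking the parity of the spinor bundle, the twist and the shift through the long sequence of mutations, and then recognising the output of the final mutation as $\Tca[3]$ rather than some other object. A secondary but worth-mentioning point is that at each step the morphism produced by the mutation must be identified up to nonzero scalar with the canonical generator of the relevant one-dimensional Hom space; this is essentially automatic because the Hom spaces involved are one-dimensional and the morphisms produced are nonzero, but it is needed to justify that the resulting cones agree with the expected object.
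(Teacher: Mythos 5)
Your proposal is correct and follows essentially the same route as the paper: condition (b) is checked from the mutation triangle together with $\Hom^\bullet(j_*\cS'',j_*\cS')=\bC[-2]$, and condition (c) by chaining $\Serre_{\Db(\widetilde{Y})}$ with the right mutations through the SOD \eqref{eq:SOD-R-mutated} using \cref{lem:R-mutate-even}. One small point: you flag that the connecting morphisms in the intermediate cones must be nonzero but dismiss this as ``essentially automatic''; the paper supplies the missing argument — if any of these arrows vanished, $\Serre_{\widetilde{\cD}}(\Tca)$ would split into a direct sum, contradicting $\Hom^0(\Tca,\Tca)=\bC$ because the Serre functor is an equivalence.
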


\begin{proof}
Again, since the category $\widetilde{\cD}$ is proper, condition (a) in \cref{def:spherical with Serre} is automatically satisfied. To check condition (b), recall that $\Tca$ sits in the exact triangle
\begin{equation}\label{eq:right-mute}
    j_* \cS' \rightarrow  j_* \cS''[2] \rightarrow \Tca.
\end{equation}
By definition we have $\Hom^\bullet(\Tca,j_*\cS'')=0$. Hence by applying $\Hom^\bullet(\Tca,-)$ to \eqref{eq:right-mute}  we obtain that 
\begin{equation*}
    \Hom^\bullet(\Tca,\Tca) = \Hom^\bullet(\Tca,j_* \cS'[1]),
\end{equation*}
and by applying $\Hom^\bullet(-,j_*\cS')$ to \eqref{eq:right-mute} we obtain
\begin{equation*}
    \Hom^\bullet(j_* \cS',j_*\cS') \leftarrow  \Hom^\bullet(j_* \cS''[2],j_*\cS') \leftarrow \Hom^\bullet(\Tca,j_*\cS').
\end{equation*}
We have by previous computations (\cf \cref{lem:excep} and \eqref{eq:push-S''-S'}) that
\begin{equation*}
    \Hom^\bullet(j_* \cS',j_*\cS')=\mathbb{C}, \quad\text{and}\quad  \Hom^\bullet(j_* \cS'',j_*\cS')=\mathbb{C}[-2].
\end{equation*}
We get from the long exact sequence that
\begin{equation*}
    \Hom^\bullet(\Tca,\Tca)=\bC \oplus \bC[-3].
\end{equation*}

To complete the proof we need to show that $\Serre_{\widetilde{\cD}}(\Tca) = \Tca[3]$. Using \cref{lemma:serre1}(b) with respect to the decomposition in \eqref{eq:SOD-R-mutated}, we have the factorization
\begin{equation*}
    \Serre_{\widetilde{\cD}}(\Tca)=\mathbb{R}_{\widetilde{\cD}^\perp}  (\Serre_{\Db(\widetilde{Y})} (\Tca))= (\mathbb{R}_{j_*\cS''} \circ \mathbb{R}_{j_*\cO_Q(-1)}\circ \dots \circ \mathbb{R}_{j_*\cO_Q(2-\dim(Y))}\circ \Serre_{\Db(\widetilde{Y})})(\Tca).
\end{equation*}
For the sake of keeping a lighter presentation, we write, by abuse of notation, $\Tca(k)$ in place of $\Tca\otimes \cO_{\widetilde{Y}}(-kQ)$, even though the object $\Tca$ does not belong to $\Db(Q)$.
As in \cref{prop:sperical-odd}, we have
\begin{equation*}
     \Serre_{\Db(\widetilde{Y})}(\Tca)=\Tca(2-\dim(Y))[\dim(Y)].
\end{equation*}
As $\mathbb{R}_{j_* \cO_Q(k)}$ is an exact functor, by \cref{lem:R-mutate-even} we have
\begin{align*}
    \mathbb{R}_{j_* \cO_Q}(\Tca) &=\cone(\mathbb{R}_{j_* \cO_Q} (j_* \cS')\rightarrow \mathbb{R}_{j_* \cO_Q} (j_* \cS''[2])) \\
    &=\cone(j_* \cS''(1)[-1]\rightarrow j_* \cS'(1)[1]) \\
    &= \Tca'(1)[-1], 
\end{align*}
where $\Tca'= \cone(j_* \cS''\rightarrow j_* \cS'[2])$. The arrow $j_*\cS''(1)[-1]\rightarrow j_*\cS'(1)[1]$ is nonzero (also similar for the arrows below), otherwise the object $\Serre_{\widetilde{\cD}}(\Tca)$ would become a direct sum of two objects, but this would contradict $\Hom^0(\Tca,\Tca)=\mathbb{C}$ as the Serre functor is an equivalence. Analogously, we obtain
\begin{equation*}
    (\mathbb{R}_{j_* \cO_Q(1)}\circ \mathbb{R}_{j_* \cO_Q})(\Tca) = \Tca(2)[-2],
\end{equation*}
and more generally
\begin{equation*}
    (\mathbb{R}_{j_* \cO_Q(k+1)}\circ \mathbb{R}_{j_* \cO_Q(k)})(\Tca(k)) = \Tca(k+2)[-2].
\end{equation*}
It follows that 
\begin{equation}
    (\mathbb{R}_{j_*\cO_Q(-2)}\circ \dots\circ \mathbb{R}_{j_*\cO_Q(2-\dim(Y))}) (\Tca(2-\dim(Y))[\dim(Y)]) = \Tca(-1)[3].
\end{equation}
Finally, we compute
\begin{equation*}
    \mathbb{R}_{j_*\cO_Q(-1)} (\Tca(-1)[3])=\Tca'[2],
\end{equation*}

and the last mutation
\begin{align*}
    \mathbb{R}_{j_*\cS''} (\Tca'[2])  &= \mathbb{R}_{j_*\cS''} (\cone(j_*\cS''\rightarrow j_* \cS'[2])[2]) \\
    &= \cone(\mathbb{R}_{j_*\cS''}(j_*\cS'')\rightarrow \mathbb{R}_{j_*\cS''}(j_* \cS'[2]))[2] \\
    &= \cone(0 \rightarrow \mathbb{R}_{j_*\cS''}(j_* \cS'[2]))[2] \\
    &=\cone(0 \rightarrow \Tca[1])[2] \\
    &= \Tca[3].
    \qedhere
\end{align*}
\end{proof}

\begin{remark}\label{rem:quasiprojective} This concludes the proof of \cref{thm:ns} in the case of a \emph{projective} variety $Y$ with an isolated nodal singularity $y$. We point out how to adjust the proofs when $Y$ is only supposed \emph{quasiprojective}. Let $Y'$ be a projective compactification of $Y$; by resolution of singularities, we can assume that $Y'$ is smooth outside $y$. We continue to denote by $\sigma \colon \tY \to Y$ the blow-up at the singular point, by $j \colon Q \to \tY$ the embedding of the exceptional divisor and by $n$ the dimension of $Q$. The variety $\tY$ is quasiprojective, and can be regarded as an open subset of the blow-up $\tY'$ of $Y'$ at $y$; we denote by $i \colon \tY \to \tY'$ the corresponding open immersion. 

Let us focus on the categorical aspects. We will denote as $\Db_Q(\tY)$ the full subcategory of $\Db(\tY)$ consisting of complexes topologically supported on $Q$; the functor $i_*$ embeds it as a full subcategory of $\Db(\tY')$.
Now, \cref{lem:excep} holds true even if $\Db(\tY)$ is not proper, because the functor $j_*$ has both left and right adjoints. From \cref{lem:R-mutate-odd} to \cref{prop:catresker}, all results hold without any change. In fact, by going through the proofs, from the Lefschetz decomposition 
\begin{equation*}
    \Db(Q)=\langle\cB_{n-1}(1-n) , \dots, \cB_1(-1), \cB_0 \rangle
\end{equation*} 
of \cref{thm:qsod} we deduce semiorthogonal decompositions not only for $\Db(\tY')$ and $\Db(\tY)$, but also for $\Db_Q(\tY)$: explicitly, we have
\begin{align*}
\Db(\tY') &= \langle (i \circ j)_*\cB_{n-1}(1-n) , \dots, (i \circ j)_*\cB_1(-1), \widetilde{\cD}' \rangle \\
\Db(\tY) &= \langle j_*\cB_{n-1}(1-n) , \dots, j_*\cB_1(-1), \widetilde{\cD} \rangle \\
\Db_Q(\tY) &= \langle j_*\cB_{n-1}(1-n) , \dots, j_*\cB_1(-1), \widetilde{\cD}_Q \rangle,
\end{align*}
where $\widetilde{\cD}'$ is defined as the left orthogonal of $\langle (i \circ j)_*\cB_{n-1}(1-n) , \dots, (i \circ j)_*\cB_1(-1)\rangle$ in $\Db(\tY')$, and $\widetilde{\cD}$ and $ \widetilde{\cD}_Q $ as the left orthogonal of $\langle j_*\cB_{n-1}(1-n) , \dots, j_*\cB_1(-1)\rangle$ in $\Db(\tY)$ and $\Db_Q(\tY)$, respectively. The categories $\widetilde{\cD}'$ and $\widetilde{\cD}$ provide categorical resolutions of $Y'$ and $Y$, respectively. Clearly $\widetilde{\cD}_Q=\widetilde{\cD} \cap \Db_Q(\tY)$, and we can easily verify that $\mathbb{R}_{\widetilde{\cD'}^\perp} \circ i_* = i_* \circ \mathbb{R}_{\widetilde{\cD_Q}^\perp}$ on $\Db_Q(\tY)$, so that $i_*\widetilde{\cD}_Q=\widetilde{\cD'} \cap i_*\Db_Q(\tY)$.

Consider now the classical generator $\Tca$ of $\ker(\sigma_*)$ given by \cref{prop:catresker}. We need to prove that it is spherical in the category $\widetilde{\cD}$. Since $\Tca$ belongs to $\widetilde{\cD}_Q$, the functors $\Hom^\bullet(\Tca, -)$ and $\Hom^\bullet(-, \Tca)$ on $\widetilde{\cD}$ take values in the category of finite-dimensional graded vector spaces, because so do they on $\Db(\tY)$ (see \cref{ex:serre quasiproj}); this shows that $\Tca$ satisfies condition (a) in \cref{def:spherical with Serre}. For the other two conditions, we can reason as follows. By \cref{ex:serre quasiproj} and \cref{lemma:Serre pair}, the pair $(\widetilde{\cD}_Q, \widetilde{\cD})$ has a Serre functor $\Serre$;
moreover, we have
\begin{multline*}
i_* \Serre (\Tca) = i_* \mathbb{R}_{\widetilde{\cD_Q}^\perp}(\Tca \otimes \omega_{\tY}[\dim(Y)]) 
= \mathbb{R}_{\widetilde{\cD'}^\perp} i_*(\Tca \otimes i^*\omega_{\tY'}[\dim(Y)]) \\
= \mathbb{R}_{\widetilde{\cD'}^\perp} (i_*\Tca \otimes \omega_{\tY'}[\dim(Y)])
= \mathbb{R}_{\widetilde{\cD'}^\perp} \circ \Serre_{\Db(\tY')}(i_*\Tca)
= \Serre_{\widetilde{\cD'}}(i_*\Tca).
\end{multline*}
From the isomorphism $i_* \Serre (\Tca)=\Serre_{\widetilde{\cD'}}(i_*\Tca)$ and the full faithfulness of $i_*$ on $\widetilde{\cD}_Q$ we deduce that conditions (c) and (b) in \cref{def:spherical with Serre} are satisfied by $\Tca$ in $\widetilde{\cD}$ if and only if they are satisfied by $i_*\Tca$ in $\widetilde{\cD}'$. Hence, the $k$-sphericalness of $\Tca$ in $\widetilde{\cD}$ is equivalent to the $k$-sphericalness of $i_*\Tca$ in $\widetilde{\cD'}$, which was proven in \cref{prop:sperical-odd} and \cref{prop:seven}. 
\end{remark}

This concludes the proof of \cref{thm:ns}.
In \cite[Definition~3.5]{Kldcr} another notion of crepancy was introduced in the categorical setting. A~categorical resolution $\widetilde{\cD}$ of $\cD$ is \emph{strongly crepant} if the relative Serre functor $\Serre_{\widetilde{\cD}/\cD}$ is isomorphic to the identity functor. We refer to \cite[Section~3]{Kldcr} for the definition of relative Serre functor. We only recall this notion in the case we consider, namely $\cD=\Db(Y)$ with a categorical resolution $\widetilde{\cD} \subset \Db(\widetilde{Y})$, where $\pi \colon \widetilde{Y} \to Y$ is a geometrical resolution of singularities: a functor $\Serre_{\widetilde{\cD}/Y} \colon \widetilde{\cD} \to \widetilde{\cD}$ is a relative Serre functor if for every $\mathcal{F}, \mathcal{G} \in \widetilde{\cD}$ there is a bifunctorial isomorphism
\begin{equation*}
\RHom(\pi_*\RHom(\mathcal{F}, \mathcal{G}), \cO_Y) \cong \pi_*\RHom(\mathcal{G}, \Serre_{\widetilde{\cD}/Y}(\mathcal{F})).
\end{equation*}
In the next proposition, we show that the weakly crepant categorical resolution $\widetilde{\cD}$ provided by \cref{prop:catres_part0} is not strongly crepant when the quasiprojective variety $Y$ with isolated nodal singularity has dimension at least~$4$. We stick with the notations of \cref{rem:quasiprojective}: $\sigma \colon \tY \to Y$ denotes the blow-up at the singular point, $Q$ its exceptional divisor, $\Db_Q(\tY)$ the full triangulated subcategory of $\Db(\tY)$ consisting of complexes topologically supported on $Q$, and $\widetilde{\cD}_Q = \widetilde{\cD} \cap \Db_Q(\tY)$. Recall that $\mathbb{T}_{\omega_{\tY}} \circ [\dim(Y)]$ is a Serre functor for the pair $(\Db_Q(\tY), \Db(\tY))$, and induces a Serre functor $\Serre$ for the pair $(\widetilde{\cD}_Q, \widetilde{\cD})$.

\begin{proposition} \label{prop:notstrcrep}
The categorical resolution $\widetilde{\cD}$ admits a relative Serre functor~$\Serre_{\widetilde{\cD}/Y}$, given by $\Serre_{\widetilde{\cD}/Y}=\mathbb{R}_{\widetilde{\cD}^\perp}\circ \mathbb{T}_{\cO((n-1)Q)}$.
\begin{enumerate}
\item For any $\Fca\in \widetilde{\cD}$ such that $j^*\Fca \in \langle \cO_Q \rangle$ we have $\Serre_{\widetilde{\cD}/Y}(\Fca)=\Fca$.
\item For any $\Fca\in\widetilde{\cD}_Q$ we have $\Serre_{\widetilde{\cD}/Y}(\Fca) = \Serre(\Fca)[-\dim(Y)]$.
\end{enumerate}
Therefore, if $\Tca$ is the classical generator of the kernel computed in \cref{prop:catresker}, we have $\Serre_{\widetilde{\cD}/Y}(\Tca)=\Tca[2-\dim(Y)]$ if $\dim(Y)$ is even and $\Serre_{\widetilde{\cD}/Y}(\Tca)=\Tca[3-\dim(Y)]$ if $\dim(Y)$ is odd. 
In particular, the categorical resolution $(\widetilde{\cD}, \sigma_*, \sigma^*)$ is not strongly crepant if $\dim(Y) >3$.
\end{proposition}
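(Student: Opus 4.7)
The plan is to establish the formula for $\Serre_{\widetilde{\cD}/Y}$ via Grothendieck--Verdier duality, deduce (a) from a filtration argument and (b) from a projection formula computation, and then combine (b) with the $k$-sphericalness of $\Tca$ to obtain the stated shift and the failure of strong crepancy.

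First, I would verify the formula for $\Serre_{\widetilde{\cD}/Y}$. Since $\sigma$ is proper and $Y$ is Gorenstein, the relative dualizing complex is $\sigma^{!}\cO_Y = \omega_\sigma = \omega_{\tY}\otimes\sigma^*\omega_Y^{-1}$, which equals $\cO((n-1)Q)$ by the canonical bundle formula $\omega_{\tY}=\sigma^*\omega_Y\otimes\cO((n-1)Q)$ established in \cref{prop:catres_part0}. For $\Fca,\Gca\in\widetilde{\cD}$, Grothendieck--Verdier duality, together with the standard identity $\RHom(\RHom(\Fca,\Gca),\omega)=\RHom(\Gca,\Fca\otimes\omega)$ (valid because $\Fca$ is perfect on the smooth $\tY$), gives
\[
\RHom_Y(\sigma_*\RHom_{\tY}(\Fca,\Gca),\cO_Y)\cong\sigma_*\RHom_{\tY}(\Gca,\Fca\otimes\cO((n-1)Q)).
\]
To replace the right-hand side by $\sigma_*\RHom_{\tY}(\Gca,\mathbb{R}_{\widetilde{\cD}^\perp}(\Fca\otimes\cO((n-1)Q)))$, one needs the vanishing $\sigma_*\RHom_{\tY}(\Gca,\cA)=0$ for $\cA\in\widetilde{\cD}^\perp$: this complex has coherent cohomology supported at the single point $y$ (because $\widetilde{\cD}^\perp\subset\Db_Q(\tY)$), and its global sections recover $\Hom^\bullet_{\tY}(\Gca,\cA)=0$, so the complex itself vanishes.

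For (a), the plan is to apply the projection formula to the filtration $\cO_{\tY}\subset\cO_{\tY}(Q)\subset\cdots\subset\cO_{\tY}((n-1)Q)$ tensored with $\Fca$: the successive cones from $\Fca$ to $\Fca\otimes\cO((n-1)Q)$ are $j_*(j^*\Fca\otimes\cO_Q(-k))$ for $k=1,\dots,n-1$. When $j^*\Fca\in\langle\cO_Q\rangle$, these cones lie in $\langle j_*\cO_Q(-k)\rangle\subset\widetilde{\cD}^\perp$, so the cone of $\Fca\to\Fca\otimes\cO((n-1)Q)$ belongs to $\widetilde{\cD}^\perp$, and mutating through it recovers $\Fca$. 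For (b), since $\sigma\circ j$ is the constant map to $y$, the pullback $j^*\sigma^*\omega_Y^{-1}$ is a trivial line bundle on $Q$; by the projection formula $\sigma^*\omega_Y^{-1}\otimes j_*\cG\cong j_*\cG$ for any $\cG\in\Db(Q)$, and this isomorphism extends to all of $\Db_Q(\tY)=\langle j_*\Db(Q)\rangle$ using naturality together with the local triviality of $\omega_Y$ near $y$. Hence $\Fca\otimes\cO((n-1)Q)\cong\Fca\otimes\omega_{\tY}=\Serre_{\Db(\tY)}(\Fca)[-\dim(Y)]$ for $\Fca\in\widetilde{\cD}_Q$, and mutation by $\mathbb{R}_{\widetilde{\cD}^\perp}$ yields $\Serre(\Fca)[-\dim(Y)]$ thanks to the compatibility of Serre functors under semiorthogonal decompositions (see \cref{lemma:Serre pair} and \cref{rem:quasiprojective}).

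Finally, the classical generator $\Tca$ of $\ker(\sigma_*)\cap\widetilde{\cD}$ is supported on $Q$ and therefore lies in $\widetilde{\cD}_Q$, so part (b) gives $\Serre_{\widetilde{\cD}/Y}(\Tca)=\Serre(\Tca)[-\dim(Y)]$. Combining this with $\Serre(\Tca)=\Tca[k]$---where $k=2$ for even and $k=3$ for odd $\dim(Y)$ by \cref{prop:sperical-odd} and \cref{prop:seven}---yields the stated formulas. For $\dim(Y)>3$ the shift $k-\dim(Y)$ is strictly negative, while the only integer $m$ with $\Tca[m]\cong\Tca$ is $m=0$ (as forced by the spherical Hom algebra $\Hom^\bullet(\Tca,\Tca)=\bC\oplus\bC[-k]$ with $k>0$); hence $\Serre_{\widetilde{\cD}/Y}\not\cong\id$ and the resolution fails to be strongly crepant. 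The main technical hurdle is the vanishing step at the end of the second paragraph---deducing the vanishing of $\sigma_*\RHom_{\tY}(\Gca,\cA)$ as a complex from the vanishing of its global sections via its point-support property---which in the quasiprojective setting demands a little extra care.
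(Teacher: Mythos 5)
Your argument is correct and follows the same overall route as the paper's proof: compute $\omega_{\tY/Y}=\cO((n-1)Q)$, obtain $\Serre_{\widetilde{\cD}/Y}=\mathbb{R}_{\widetilde{\cD}^\perp}\circ\mathbb{T}_{\cO((n-1)Q)}$, observe that on $\widetilde{\cD}_Q$ this agrees with $\Serre[-\dim(Y)]$ because $j^*\omega_{\tY/Y}=j^*\omega_{\tY}$, and conclude from the computed sphericity shift. The one genuine difference is that for the explicit formula and for part (a) the paper simply cites \cite[Proposition~4.7]{Kldcr}, whereas you unfold that citation: you rederive the formula via Grothendieck--Verdier duality (with the required vanishing $\sigma_*\RHom(\Gca,\cA)=0$ for $\cA\in\widetilde{\cD}^\perp$ deduced from the point-supportedness of the complex plus $\Hom^\bullet_{\tY}(\Gca,\cA)=0$), and you reprove part (a) by filtering $\Fca\otimes\cO((n-1)Q)$ by $\cO(kQ)$ and checking the graded pieces $j_*(j^*\Fca\otimes\cO_Q(-k))$ land in $\widetilde{\cD}^\perp$. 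This makes the proof self-contained at the cost of some length; both the paper and your writeup treat the passage from $j_*\Db(Q)$ to all of $\Db_Q(\tY)$ in part (b) somewhat implicitly (the underlying point being that $\sigma^*\omega_Y$ is trivial on a neighbourhood of $Q$), but the conclusions are unaffected since the relevant object $\Tca$ is a cone of pushforwards from $Q$.
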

\begin{proof}
The relative canonical bundle of $\sigma$ is given by $\omega_{\widetilde{Y}/Y}=\cO_{\widetilde{Y}}((n-1)Q)$. By \cite[Proposition 4.7]{Kldcr}, the relative Serre functor $\Serre_{\tY/Y}=\mathbb{T}_{\omega_{\widetilde{Y}/Y}}$ of $\Db(\tY)$ induces a relative Serre functor $\Serre_{\widetilde{\cD}/Y}$ on $\widetilde{\cD}$; its explicit expression, as well as and part (a), can be found in \loccit. We prove part (b). 
Since $j^* \omega_{\widetilde{Y}/Y} = j^*\omega_{\widetilde{Y}} $, for any $\Gca \in \Db(Q)$ we have
\[
\Serre_{\widetilde{Y}/Y} (j_*\Gca) = j_* \Gca \otimes \omega_{\widetilde{Y}/Y} = j_* (\Gca\otimes j^*\omega_{\widetilde{Y}/Y}) = j_* (\Gca\otimes j^*\omega_{\widetilde{Y}}) = j_* \Gca \otimes \omega_{\widetilde{Y}} = \mathbb{T}_{\omega_{\tY}}(j_*\Gca).
\]
Hence, for any $\Fca \in \widetilde{\cD}_Q$, we have 
\[\Serre_{\widetilde{\cD}/Y} (\Fca)=(\mathbb{R}_{\widetilde{\cD}^\perp}\circ \Serre_{\widetilde{Y}/Y})(\Fca)=(\mathbb{R}_{\widetilde{\cD}^\perp}\circ \mathbb{T}_{\omega_{\tY}})(\Fca)=\Serre(\Fca)[-\dim(Y)].
\]
Therefore, as soon as $\dim(Y)>3$, the relative Serre functor $\Serre_{\widetilde{\cD}/Y}$ is not the identity on the spherical object $\Tca \in \widetilde{\cD}_Q$, so the categorical resolution $\widetilde{\cD}$ is not strongly crepant.
\end{proof}

We now deduce \cref{thm:geom-nodal-implies-abstract-nodal} from \cref{thm:ns}.
    
\begin{theorem}\label{thm:geom-nodal-implies-abstract-nodal2}
If $\cT$ is a geometric nodal category, then $\cT$ is an abstract nodal category, \ie there exists a categorical resolution $\sigma_*\colon\widetilde{\cD}\to\cT$ which is weakly crepant and whose kernel is classically generated by a single spherical object.
Furthermore, $\sigma_*\colon\widetilde{\cD}\to\cT$ is a localization up to direct summands.
\end{theorem}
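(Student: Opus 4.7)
The strategy is to invoke \cref{thm:ns} on the ambient singular variety $Y$ and then restrict the resulting resolution to a suitable subcategory over $\cT$. Since $\cT\subset\Db(Y)$ is admissible, write $\Db(Y)=\langle\cA,\cT\rangle$ with $\cA=\cT^\perp$. Apply \cref{thm:ns} to $Y$ to obtain a weakly crepant categorical resolution $\pi_*\colon\widetilde{\cD}_Y\to\Db(Y)$ with left adjoint $\pi^*$, where $\widetilde{\cD}_Y$ is an admissible subcategory of $\Db(\tY)$ (for $\tY$ the blow-up of $Y$ at the nodal point), $\ker(\pi_*)$ is classically generated by a spherical object $\Tca$, and $\pi_*$ is a localization up to direct summands.

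\textbf{Construction and resolution properties.} Define the full triangulated subcategory
\[
  \widetilde{\cT}:=\{\Fca\in\widetilde{\cD}_Y\ :\ \pi_*\Fca\in\cT\}\subset\widetilde{\cD}_Y,
\]
which contains $\ker(\pi_*)$ (since $0\in\cT$) and also $\pi^*(\cT^\perf)$ (since $\pi_*\pi^*=\id$ on $\Dperf(Y)$, and $\cT^\perf\subset\Dperf(Y)$ because $\cT$ is admissible in $\Db(Y)$). Let $\sigma_*:=\pi_*|_{\widetilde{\cT}}\colon\widetilde{\cT}\to\cT$ and $\sigma^*:=\pi^*\circ\iota_\cT|_{\cT^\perf}\colon\cT^\perf\to\widetilde{\cT}$. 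The adjunction $\sigma^*\dashv\sigma_*$, the isomorphism $\sigma_*\sigma^*\cong\id_{\cT^\perf}$, and weak crepancy all transfer immediately from the corresponding properties of $\pi_*\dashv\pi^*$ and the fully faithfulness of $\iota_\cT$. The kernel of $\sigma_*$ is $\widetilde{\cT}\cap\ker(\pi_*)=\ker(\pi_*)$, classically generated by $\Tca$; and $\sigma_*$ is a localization up to direct summands because $\pi_*$ is and $\cT$ is a semiorthogonal summand of $\Db(Y)$.

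\textbf{Admissibility and sphericalness.} To see that $\widetilde{\cT}$ is geometric, I would show that it is admissible in $\widetilde{\cD}_Y$ (hence in $\Db(\tY)$), by constructing an SOD $\widetilde{\cD}_Y=\langle\widetilde{\cA},\widetilde{\cT}\rangle$ with $\widetilde{\cA}:=\widetilde{\cT}^{\perp}$. Concretely, for any $\Fca\in\widetilde{\cD}_Y$ one needs to lift the SOD-triangle $a\to\pi_*\Fca\to t$ in $\Db(Y)$ (with $a\in\cA,t\in\cT$) to a triangle $\widetilde{a}\to\Fca\to\widetilde{t}$ in $\widetilde{\cD}_Y$ with $\widetilde{t}\in\widetilde{\cT}$; together with $\ker(\pi_*)\subset\widetilde{\cT}$ this forces $\widetilde{a}\in\widetilde{\cA}$, giving the SOD. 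Once admissibility is in place, the sphericalness of $\Tca$ in $\widetilde{\cT}$ follows readily: the self-$\Hom^\bullet$ of $\Tca$ is unchanged by passing to the subcategory $\widetilde{\cT}$, and by \cref{lemma:serre1} (resp.\ \cref{lemma:Serre pair} in the quasiprojective case) the Serre functor of $\widetilde{\cT}$ is obtained from that of $\widetilde{\cD}_Y$ by mutation through $\widetilde{\cA}$; since $\Tca\in\widetilde{\cT}$ is orthogonal to $\widetilde{\cA}$, the mutation fixes $\Tca[k]$, so $\Tca$ remains $k$-spherical for the same $k\in\{2,3\}$.

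\textbf{Main obstacle.} The delicate point is the admissibility step, i.e.\ the lift of the SOD of $\Db(Y)$ to one of $\widetilde{\cD}_Y$. The natural candidates $\pi^*a,\pi^*t$ for $\widetilde{a},\widetilde{t}$ are only available for perfect $a,t$, but the SOD components of $\pi_*\Fca$ need not be perfect. I would resolve this by exploiting that $\pi_*$ is a localization up to direct summands, so that the induced functor $\widetilde{\cD}_Y/\ker(\pi_*)\to\Db(Y)$ is an equivalence onto a dense subcategory: this universal property allows the SOD on the target to be pulled back to an SOD on the source once the kernel is specified to lie entirely in $\widetilde{\cT}$, which is automatic here as $\Tca\in\ker(\pi_*)\subset\widetilde{\cT}$.
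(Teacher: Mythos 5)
Your construction matches the paper's: your subcategory $\widetilde{\cT}=\{\Fca\in\widetilde{\cD}_Y : \pi_*\Fca\in\cT\}$ coincides (by adjunction) with the paper's $\widetilde{\cT}=(\sigma^*\cA)^\perp$, and your reduction of weak crepancy, adjunction, and kernel computation to the properties of the resolution of $\Db(Y)$ from \cref{thm:ns} is correct. Your sphericalness argument via the mutated Serre functor also works, since $\Tca\in\ker(\pi_*)\subset\widetilde{\cT}$ implies $\Tca\in{}^\perp\widetilde{\cA}$, so $\bR_{\widetilde{\cA}}$ fixes it. However, the ``main obstacle'' you identify --- admissibility of $\widetilde{\cT}$ in $\widetilde{\cD}_Y$ --- is a genuine gap, and the fix you sketch does not work. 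The universal property of a Verdier quotient lets you produce functors \emph{out of} the quotient, not lift triangles from the target back to the source; and since $\pi_*$ is only a localization up to direct summands (neither full nor surjective on objects), there is no mechanism for pulling the semiorthogonal decomposition of $\Db(Y)$ back through $\pi_*$ to one of $\widetilde{\cD}_Y$.

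The missing ingredient --- and the actual crux of the paper's proof --- is that the complementary component $\cA=\cT^\perp$ in $\Db(Y)=\langle\cT,\cA\rangle$ is automatically \emph{smooth}, i.e.\ $\cA=\cA^{\perf}\subset\Dperf(Y)$. This is proved by passing to singularity categories: the induced decomposition $\Dsg(Y)=\langle\cT^{\sg},\cA^{\sg}\rangle$ must be trivial because $\Dsg(Y)^\oplus$ is equivalent to $\Dsg(\CC[z]/(z^2))$ (even dimension) or $\Dsg(\CC[x,y]/(xy))$ (odd dimension), and neither admits a non-trivial semiorthogonal decomposition; since $\cT^{\perf}$ is not smooth by hypothesis, $\cT^{\sg}\neq0$, so $\cA^{\sg}=0$. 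Once $\cA$ is known to be perfect, $\pi^*$ is defined and fully faithful on it, the composite $\pi^*\circ\iota_\cA\colon\cA\to\widetilde{\cD}_Y$ has the right adjoint $\iota_\cA^!\circ\pi_*$, hence $\cA$ is admissible in $\widetilde{\cD}_Y$, and $\widetilde{\cT}=\cA^\perp$ is then admissible (hence geometric) for free. This is the step you need to insert to make the argument go through; your attempt to circumvent it by lifting the SOD would have to be discarded.
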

\begin{proof}
By hypothesis, there exists a quasiprojective variety $Y$ which has only an isolated nodal singularity, and a semiorthogonal decomposition $\Db(Y)=\langle\cT,\cT'\rangle$ with $\cT^{\perf}$ not smooth.
We claim that this forces $\cT'^{\perf}$ to be smooth. For this, we look at the categories of singularities $\Dsg(Y)\coloneqq\Db(Y)/\Dperf(Y)$ and $\cT^\sg\coloneqq\cT/\cT^\perf$.
By \cite[§2, §3.3]{Orl:04} and \cite[Thm.~2.10]{Orl:11} we know that $\Dsg(Y)^{\oplus}\simeq\Dsg(\CC[z]/(z^2))$ if $\dim(Y)$ is even, and $\Dsg(Y)^{\oplus}\simeq\Dsg(\CC[x,y]/(xy))$ if $\dim(Y)$ is odd.
In the even dimensional case, following \cite[§3.3]{Orl:04}, one sees that there exist non-zero morphisms between any pair of non-zero objects in $\Dsg(\CC[z]/(z^2))$. Hence the full subcategory $\Dsg(Y)\subset\Dsg(\CC[z]/(z^2))$ admits no non-trivial semiorthogonal decomposition.
But we have the semiorthogonal decomposition $\Dsg(Y)=\langle\cT^\sg,\cT'^\sg\rangle$ by \cite[Prop.~1.10]{Orl:06}, so either $\cT^\sg=0$ or $\cT'^\sg=0$, as desired.
In the odd dimensional case, the category $\Dsg(\CC[x,y]/(xy))$ is equivalent to the category of $\ZZ/2\ZZ$-graded finite-dimensional vector spaces, where the shift functor swaps the graded pieces \cf \cite[Ex.~2.18]{KPS:21}, so we can conclude as before.

Let us assume that the dimension $\dim(Y)\geq 2$ is even, the proof of the odd dimensional case is similar. Then, by \cref{thm:ns}, we know that there is a weakly crepant categorical resolution $\sigma_*\colon \widetilde{\cD}\rightarrow \Db(Y)$ whose kernel $\ker(\sigma_*)$ is classically generated by a $2$-spherical object $\cS$.

Let us denote by $\imath\colon\cT'\rightarrow \Db(Y)$ the embedding functor. As $\cT'$ is admissible, it has a left adjoint functor $\imath^*$ and a right adjoint functor $\imath^!$. We know that $\imath(\cT')\subset \Dperf(Y)$ since $\cT'^{\perf}=\cT'$ by hypothesis.
Then we see that the functor $\sigma^*\circ\imath\colon\cT'\rightarrow\widetilde{\cD}$ is fully-faithful. Moreover, this functor has the right adjoint $\imath^!\circ\sigma_*$, thus making $\cT'$ an admissible subcategory of $\widetilde{\cD}$. So we can consider the semiorthogonal decomposition $\widetilde{\cD}=\langle\widetilde{\cT},\cT'\rangle$, where $\widetilde{\cT}\coloneqq\cT'^\perp$.

Now we claim that the restriction of $\sigma_*$ to $\widetilde{\cT}$ provides a categorical resolution that satisfies the conditions in \cref{def:anc}. First, if $\Fca\in\cT\cap\Dperf(Y)$ and $\Gca\in\cT'\subset\widetilde{\cD}$, then $\Hom^\bullet(\Gca,\sigma^*\Fca)=\Hom^\bullet(\sigma_*\Gca,\Fca)=0$, which implies that $\sigma^*$ maps $\cT^{\perf}$ to $\widetilde{\cT}$.
Second, if $\Fca\in\widetilde{\cT}$ and $\Gca\in\cT'$, then $\Hom^\bullet(\Gca,\sigma_*\Fca)=\Hom^\bullet(\sigma^*\Gca,\Fca)=0$, which implies that $\sigma_*$ maps $\widetilde{\cT}$  to $\cT$.
Regarding adjointness and weak crepancy, let $\Fca\in \cT^{\perf}$ and $\Gca\in\widetilde{\cT}$, and considering them as objects of $\Db(Y)$ and~$\widetilde{\cD}$, respectively, we see that $\Hom(\sigma^*\Fca,\Gca)=\Hom(\Fca,\sigma_*\Gca)$ and $\Hom(\Gca,\sigma^*\Fca)=\Hom(\sigma_*\Gca,\Fca)$.
In the same vein we have that $\id_{\cT^{\perf}}\rightarrow\sigma_*\sigma^*$ is an isomorphism.

By \cref{thm:ns} we know that \[\widetilde{\cD}/\langle\cS\rangle^{\oplus}\rightarrow \Db(Y)\] is an equivalence onto its dense image. Since $\sigma_*(\cS)=0$ and for $\Gca\in \cT'$ we have $\Hom(\sigma^*\Gca,\cS)=\Hom(\Gca,\sigma_*\cS)=0$, we see that $\cS\in \widetilde{\cT}$.
So, by the universal property of Verdier quotients, we can factor $\sigma_*|_{\widetilde{\cT}}\colon\widetilde{\cT}\to\cT$ via \[\overline{\sigma_*}\colon\widetilde{\cT}/\langle\Sca\rangle^{\oplus}\to\cT.\] Furthermore, by \cite[Lemma 1.1]{Orl:06} 
we have that the embedding $\widetilde{\cT}\subset\widetilde{\cD}$ descends to a fully-faithful functor $\widetilde{\cT}/\langle\Sca\rangle^{\oplus}\to\widetilde{\cD}/\langle\Sca\rangle^{\oplus}$, which implies that $\overline{\sigma_*}$ is fully-faithful. 

Using that $\sigma_*(\cT')\subset \cT'$, we see that $\im(\overline{\sigma_*})=\cT\cap\im(\sigma_*)$. We need to check that the idempotent completion of the latter is~$\cT$. 
Since $\sigma_*\sigma^*=\id_{\Dperf(Y)}$, we see that ${}^\perp\cT=\cT'\subset\im(\sigma_*)$, which implies that \[\im(\sigma_*)\cap\cT=\bL_{\cT'}(\im(\sigma_*)).\]
Since we know that $\im(\sigma_*)^\oplus=\Db(Y)$, we get
\begin{align*}
\cT=\cT'^\perp=\bL_{\cT'}(\Db(Y))=\ &\bL_{\cT'}(\im(\sigma_*)^\oplus) \\ &\subset(\bL_{\cT'}(\im(\sigma_*)))^\oplus=(\im(\sigma_*)\cap\cT)^\oplus.
\end{align*}
Hence, since $\cT$ is idempotent complete, we conclude $\cT=(\im(\sigma_*)\cap\cT)^\oplus$, as desired.
\end{proof}


\section{Categorical resolutions of nodal cubic fourfolds} \label{sec:nodalcubic4}

In this section we focus on the special case of a cubic fourfold $Y \subset \mathbb{P}^5$ with a single isolated nodal singularity~$P\in Y$. Our goal is to prove \cref{prop:nodalc4} by applying \cref{thm:ns}.

\subsection{Geometric setting}
We first recall the geometric setting following \cite[Section~5]{Kcf}, which can be summarized in the diagram
\begin{equation}\label{eq:singular-cubic-diagram}
\begin{tikzcd}
& Q\ar[r, "j"]\ar[ld] & \tY\ar[ld, "\sigma"'] \ar[rd, "\pi"] & D \ar[l,
"i"'] \ar[rd, "p"]\\
P\ar[r] & Y & & \mathbb{P}^4 & S.\ar[l, ""']
\end{tikzcd}
\end{equation}
On the left hand side, the point $P$ is the nodal singular point and the morphism $\sigma\colon\tY\to Y$ is the blow-up of $Y$ at  
$P$; this yields the resolution of singularities $\tY$, whose exceptional divisor $Q$ is a smooth quadric of dimension~$3$. 
On the right hand side, the linear projection from $P$ induces a regular map $\pi\colon \tY\to \bP^4$, which can be shown to be the blow-up of $\bP^4$ along a smooth K3 surface $S$ that is a $(2,3)$-complete intersection, \cf\cite[Lemma~5.1]{Kcf}. We denote by $D$ the exceptional divisor of the map $\pi$, and write $j\colon Q\hookrightarrow\tY$ and $i\colon D\hookrightarrow\tY$ for the inclusions, as well as $p$ for the restriction $\pi|_D$.

Moreover, the restriction $\pi|_Q$ identifies the divisor $Q$ with the defining quadric of $S$ in $\bP^4$.
Also, the description of the map $\pi$ shows that the surface $S$ parametrizes the lines contained in $Y$ passing through~$P$. Such lines are contracted by the linear projection from $P$, and the~divisor~$D$ is the union of their strict transforms in $\tY$.

The following result clarifies the relation between $Q$, $D$, and $S$.

\begin{lemma}\label{lem:D-Q-square-cartesian}
The restriction $p|_Q=\pi|_{Q\cap D}$ of the projection map $\pi$ identifies $Q\cap D \subset \tY$ with the K3 surface $S$. In other words, $S$ is a retract of $D$ and the diagram
\[
\begin{tikzcd}
Q\ar[d, "j"] & S\ar[l, "t"'] \ar[d, "s"'] \\
\tY & D \ar[l, "i"'] \ar[u, bend right, "p"']
\end{tikzcd}
\]
is cartesian, where $t$ denotes the inclusion of $S$ into $Q$, and $s\colon S\isomto Q\cap D\hookrightarrow D$ denotes the inclusion into $D$.
\end{lemma}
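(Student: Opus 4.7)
The plan is to rely on the two ingredients that have already been recalled from \cite[Lemma~5.1]{Kcf}: first, $\pi\colon\tY\to\mathbb{P}^4$ is the blow-up of $\mathbb{P}^4$ along $S$, so that $D$ is the scheme-theoretic preimage $\pi^{-1}(S)$; and second, $\pi|_Q$ identifies $Q$ with the defining quadric $Q'\subset\mathbb{P}^4$ of $S$, which in particular contains $S$ since $S$ is the $(2,3)$-complete intersection cut out by the quadratic equation defining $Q'$ and a cubic. With this, the statement should follow by pasting cartesian squares.

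More precisely, the first ingredient gives a cartesian square
\[
\begin{tikzcd}
D\ar[d, "p"']\ar[r, "i"] & \tY\ar[d, "\pi"] \\
S\ar[r] & \mathbb{P}^4.
\end{tikzcd}
\]
Pasting it on the left with the tautological cartesian square expressing $Q\cap D=Q\times_{\tY}D$ would yield an identification $Q\cap D=Q\times_{\mathbb{P}^4}S$. Then, using the second ingredient to rewrite $Q\times_{\mathbb{P}^4}S$ as $Q'\times_{\mathbb{P}^4}S=Q'\cap S$, and using $S\subset Q'$, this scheme-theoretic intersection is just $S$.

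Finally, I would observe that, under these identifications, the map $Q\cap D\to Q$ induced by $j$ corresponds to the closed immersion $t\colon S\hookrightarrow Q'\cong Q$, so that the square in the statement is cartesian and $p|_Q=\pi|_{Q\cap D}$ is an isomorphism onto $S$. The section $s\colon S\to D$ arises either from the universal property of the cartesian square (since $t$ lifts through $i$ via $j\circ t$) or simply as the inclusion of $Q\cap D$ into $D$.

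There is no serious obstacle here: the only point that deserves mild care is to ensure that the intersections and fiber products involved are taken scheme-theoretically and not just set-theoretically, which is automatic from the blow-up definitions of $Q$ and $D$ as exceptional Cartier divisors. Everything else is a formal manipulation with the two cartesian squares coming from the description of $\pi$ and from the identification $\pi|_Q\colon Q\isomto Q'$.
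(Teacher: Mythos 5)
Your proof is correct but takes a genuinely different route from the paper's. You invoke the standard blow-up fact that the exceptional divisor $D$ is the scheme-theoretic preimage $\pi^{-1}(S)$, making the square $D\to\tY$, $S\to\bP^4$ cartesian; pasting it with the tautological square for $Q\cap D = Q\times_{\tY}D$ gives $Q\cap D = Q\times_{\bP^4}S$, which becomes $Q'\cap S = S$ after applying the isomorphism $\pi|_Q\colon Q\isomto Q'$ and the containment $S\subset Q'$. The paper instead observes only that $Q\cap D$ is a closed subscheme of $(\pi|_Q)^{-1}(S)\cong S$ (using $\pi(D)=S$ set-theoretically rather than the scheme-theoretic preimage property), then proves equality by combining a non-emptiness observation (each line in $Y$ through $P$ produces a point of $Q\cap D$) with Krull's principal ideal theorem (since $D$ is a Cartier divisor and $Q$ is a threefold, $Q\cap D$ has dimension $\geq 2$ at every point). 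Your argument is more formal and cleaner, dispensing with both the dimension count and the geometric discussion of lines through the node; it buys a shorter proof at the cost of making explicit use of the cartesian-square property of the blow-up, which the paper's proof never needs.
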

\begin{proof}
Recall that $\pi|_Q$ is an isomorphism between $Q$ and the defining quadric of $S$ in $\bP^4$, and that $\pi(D)=S$. Therefore the intersection $Q\cap D$ is a closed subscheme of the pre-image $(\pi|_Q)^{-1}(S)$ in $Q$, which is a smooth K3 surface. Note that $Q\cap D$ is non-empty since each line in $Y$ passing through $P$ provides a point contained in $Q\cap D$. Then, by Krull's principal ideal theorem, the dimension of $Q\cap D$ is at least $2$ everywhere. We conclude that $Q\cap D$ coincides with the surface $(\pi|_Q)^{-1}(S)$. In other words, $\pi|_{Q\cap D}$ identifies $Q\cap D$ with the K3 surface $S$.
\end{proof}

\subsection{Computation of the kernel}
We work in the geometric situation summarized in diagram~\eqref{eq:singular-cubic-diagram}. Let $h$ be the class of a hyperplane in $\mathbb{P}^4$, and $H$ be the class of a hyperplane section of $Y\subset\mathbb{P}^5$. By abuse of notation, we use the same notation for their pullbacks to $\tY$.

Recall that $\langle \cO_Y,\cO_Y(H),\cO_Y(2H)\rangle$ is an exceptional sequence, so we have the semiorthogonal decomposition
\begin{equation} \label{eq:sodc4}
\Db(Y)=\langle \cA_Y, \cO_Y, \cO_Y(H), \cO_Y(2H) \rangle, 
\end{equation}
where $\cA_Y\coloneqq\langle \cO_Y, \cO_Y(H), \cO_Y(2H) \rangle^\perp$. Now we consider two semiorthogonal decompositions of $\Db(\tY)$ arising from the two geometric interpretations of the variety $\tY$, \cf\cite[(16), (17)]{Kcf}.
First, we apply \cref{prop:D-tilde-SOD} using the Lefschetz decomposition of $\Db(Q)$ from \cref{thm:qsod} to write
\[
    \Db(\tY)=\langle j_*\cO_Q(-2h),j_*\cO_Q(-h),\widetilde{\cD}\rangle,
\]
where $\widetilde{\cD}\coloneqq {}^\perp\langle 
j_*\cO_Q(-2h),j_*\cO_Q(-h) \rangle$ is a weakly crepant resolution of $\Db(Y)$. 
Then we consider the decomposition of $\widetilde{\cD}$ induced by that of $\Db(Y)$ in \eqref{eq:sodc4}.
As $Y$ has rational singularities, $\sigma^*:\Db(Y)\rightarrow \widetilde{\cD}$ is fully faithful, \cf \cite[Lemma~2.4]{Kldcr}, so the pullbacks of $\cO_Y$, $\cO_Y(H)$ and $\cO_Y(2H)$ along $\sigma$ are an exceptional sequence in $\widetilde{\cD}$, and we obtain
\begin{equation*}
    \widetilde{\cD}=\langle \widetilde{\cA}_Y,\cO_{\tY},\cO_{\tY}(H),\cO_{\tY}(2H) \rangle.
\end{equation*}
Substituting this in the decomposition above, we get
\begin{equation}
    \Db(\tY)=\langle j_*\cO_Q(-2h), j_*\cO_Q(-h),\widetilde{\cA}_Y,\cO_{\tY},\cO_{\tY}(H),\cO_{\tY}(2H)\rangle.
\end{equation}
Note that the residual category $\widetilde{\cA}_Y$ is a weakly crepant resolution of $\cA_Y$ by \cite[Lemma~5.8]{Kcf}.
On the other hand, since $\tY$ is the blow-up of $\bP^4$ along the K3 surface $S$, we have by Orlov's blow-up formula \cite{Orlov} that
\begin{equation}
    \Db(\tY) =\langle\Phi(\Db(S)), \cO_{\tY}(-3h),\cO_{\tY}(-2h),\cO_{\tY}(-h),\cO_{\tY},\cO_{\tY}(h)\rangle,
\end{equation}
where $\Phi \colon \Db(S) \to \Db(\widetilde{Y})$ is given by $\Phi=\bT_{\cO_{\widetilde{Y}}(D)} \circ i_* \circ p^*$.
Recall that $\bT_{\cO_{\widetilde{Y}}(D)}$ denotes the functor which twists by $\cO_{\widetilde{Y}}(D)$.

Using a series of mutations, one may relate these two decompositions and show that there is an equivalence $\Phi''\colon \Db(S)\isomto \widetilde{\cA}_Y$, \cf \cite[Corollary~5.7]{Kcf}, so $\Db(S)$ is also a weakly crepant categorical resolution of $\cA_Y$. 
The equivalence is explicitly given by
\[
  \Phi''= \mathbb{R}_{\cO_{\tY}(-h)} \circ \mathbb{R}_{\cO_{\tY}(-2h)} \circ \mathbb{T}_{\cO_{\tY}(D-2h)} \circ i_* \circ p^*.
\]

Now, applying \cref{prop:catresker}, the weakly crepant categorical resolution of $\Db(Y)$ given by $\widetilde{\cD}$ together with the restrictions of $\sigma_*$ and $\sigma^*$ has kernel classically generated by $j_*\cS$. We show that the latter is also a classical generator of the kernel of $\widetilde{\cA}_Y \to \cA_Y$.
Since $\widetilde{\cA}_Y$ is an admissible subcategory, it is in particular thick, so it suffices to prove the following lemma.
\begin{lemma} \label{lemma:sintildeA}
The object $j_*\cS$ lies in $\widetilde\cA_Y$.
\end{lemma}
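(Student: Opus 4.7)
The plan is to verify directly the two families of Hom-vanishings that characterize $\widetilde{\cA}_Y$ as a component of the semiorthogonal decomposition
\[
\Db(\tY)=\langle j_*\cO_Q(-2h),\, j_*\cO_Q(-h),\, \widetilde{\cA}_Y,\, \cO_{\tY},\, \cO_{\tY}(H),\, \cO_{\tY}(2H)\rangle,
\]
namely that $\Hom^\bullet(j_*\cS,\, j_*\cO_Q(-kh))=0$ for $k=1,2$ and $\Hom^\bullet(\cO_{\tY}(kH),\, j_*\cS)=0$ for $k=0,1,2$. Throughout one uses that, under the isomorphism $\pi|_Q\colon Q\isomto\pi(Q)\subset\bP^4$, the notation $\cO_Q(kh)$ coincides with the intrinsic hyperplane bundle $\cO_Q(k)$ on the quadric used in \cref{sec:generalcase} (equivalently, with $(\cN^\vee_{Q/\tY})^{\otimes k}$), so that the triangles and cohomology vanishings from \cref{sec:generalcase} apply verbatim.

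For the Hom-vanishings involving $\cO_{\tY}(kH)$, the key observation is that $\sigma$ contracts $Q$ to the nodal point of $Y$, so $j^*\cO_{\tY}(H)=\cO_Q$. By the projection formula, all three Hom-spaces reduce to $\Ho^\bullet(Q,\cS)$. I would show this cohomology vanishes by combining \cref{thm:qua}(c) with Serre duality: the intermediate and degree-zero pieces vanish directly by \cref{thm:qua}(c), while the top one $\Ho^3(Q,\cS)$ is identified with $\Ho^0(Q,\cS(-2))^\vee$ via $\cS^\vee=\cS(1)$ and $\omega_Q=\cO_Q(-3)$ on the $3$-dimensional quadric, and this also vanishes by \cref{rem:twist-vanish}.

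For the Hom-vanishings involving $j_*\cO_Q(-kh)$, I would first apply the adjunction $j^*\dashv j_*$ to rewrite
\[
\Hom^\bullet(j_*\cS,\, j_*\cO_Q(-kh)) = \Hom^\bullet(j^*j_*\cS,\, \cO_Q(-k)),
\]
and then apply $\Hom^\bullet(-,\cO_Q(-k))$ to the triangle $j^*j_*\cS\to\cS\to\cS(1)[2]$ of \eqref{eq:restrict-sequence1}. The outer terms of the resulting exact triangle are $\Ho^\bullet(Q,\cS(1-k))$ and $\Ho^\bullet(Q,\cS(-k))[-2]$. For $k=1,2$, all the involved twists $\cS, \cS(-1), \cS(-2)$ lie in the vanishing range $1-n\le\cdot\le 0$ (with $n=3$) of \cref{rem:twist-vanish}, so the desired vanishing follows.

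The only potential obstacle is the small notational reconciliation explained in the first paragraph; once the compatibility between the two conventions for the hyperplane bundle on $Q$ is in place, the actual verifications are routine cohomology computations of the same flavor as those that located the kernel generator inside $\widetilde{\cD}$ in \cref{sec:generalcase}.
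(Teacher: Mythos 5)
Your proof is correct and rests on the same two observations the paper uses: that $\cO_{\tY}(kH)$ restricts trivially to $Q$, and that $\Ho^\bullet(Q,\cS(k))=0$ for $-2\le k\le 0$ (\cref{rem:twist-vanish}). The only difference is that the paper obtains the right-orthogonality to $j_*\cO_Q(-h)$ and $j_*\cO_Q(-2h)$ for free by citing \cref{prop:catresker} (which already places $j_*\cS$ in $\widetilde{\cD}$), whereas you re-derive it directly via the triangle \eqref{eq:restrict-sequence1}; this just unpacks the same computation that underlies \cref{lem:R-mutate-odd}, so the two arguments are essentially identical in content.
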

\begin{proof}
By \cref{prop:catresker}, we have that $j_*\cS$ lies in $\widetilde{\cD}={}^\perp \left< j_*\cO_Q(-2h), j_*\cO_Q(-h) \right>$. It now suffices to verify that $j_*\cS\in \left< \cO_{\tY}, \cO_{\tY}(H), \cO_{\tY}(2H) \right>^\perp$.

Note that for all $k\in\bZ$ we have
\[
\Hom^\bullet_{\tY}(\cO_{\tY}(kH),j_*\cS)=
\Hom^\bullet_Q(\cO_Q,\cS)=0,
\]
since $Q$ is the exceptional divisor of
$\sigma$ and the line bundle $\cO_{\tY}(kH)$ pulls back to the trivial line bundle on $Q$.
\end{proof}

Next we describe $j_*\cS$ as an object in $\Db(S)$ using the left adjoint of $\Phi''$. The latter has been computed in \cite[Remark~5.9]{Kcf}, but beware of a misprint in \loccit, so we provide here its correct expression.

\begin{proposition}
The left adjoint of $\Phi''$ is 
\[
  \Psi=p_* \circ i^* \circ \bT_{\cO_{\tY}(-3h+D)[1]} \circ \bL_{\cO_{\tY}(3h-D)} \circ \bL_{\cO_{\tY}(4h-D)}.
\]
\end{proposition}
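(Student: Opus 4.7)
The plan is to compute $\Psi$ as the composition of the left adjoints of the five factors of $\Phi''$ in reverse order, and then simplify using the projection formula and the geometry of the blow-up $\pi\colon\tY\to\bP^4$.

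I would decompose $\Phi''=F_5\circ F_4\circ F_3\circ F_2\circ F_1$ with $F_1=p^*$, $F_2=i_*$, $F_3=\bT_{\cO_\tY(D-2h)}$, $F_4=\bR_{\cO_\tY(-2h)}$, $F_5=\bR_{\cO_\tY(-h)}$, so that $\Psi=F_1^L\circ F_2^L\circ F_3^L\circ F_4^L\circ F_5^L$. The left adjoints of the simpler factors are standard: $(i_*)^L=i^*$ and $(\bT_L)^L=\bT_{L^{-1}}$; and for the $\bP^1$-bundle $p\colon D\to S$, Grothendieck--Verdier duality gives $p^!(-)=p^*(-)\otimes\omega_{D/S}[1]$, whence $(p^*)^L(-)=p_*(-\otimes\omega_{D/S}[1])$.

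For the right mutations I would use the general fact: for an exceptional object $E$ in a triangulated category $\cT$ with Serre functor $\Serre_\cT$, the left adjoint of $\bR_E$ is $\bL_{\Serre_\cT^{-1}E}$. This is verified by comparing the defining mutation triangles of $\bR_E(Y)$ and $\bL_{\Serre_\cT^{-1}E}(X)$, using Serre duality in the form $\Hom^\bullet(\Serre_\cT^{-1}E,X)=\Hom^\bullet(X,E)^\vee$ to exhibit $\Hom(\bL_{\Serre_\cT^{-1}E}(X),Y)$ and $\Hom(X,\bR_E(Y))$ as fibers of the same natural map $\Hom(X,Y)\to\Hom^\bullet(X,E)\otimes\Hom^\bullet(Y,E)^\vee$. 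Applied to $\cT=\Db(\tY)$ with Serre functor $\bT_{\omega_\tY}[4]$ and $\omega_\tY=\cO_\tY(-5h+D)$, and using the fact that $\bL_{E[k]}=\bL_E$, this immediately yields $F_5^L=\bL_{\cO_\tY(4h-D)}$ and $F_4^L=\bL_{\cO_\tY(3h-D)}$, matching the last two factors of the stated $\Psi$.

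To combine the first three factors I would compute the relative dualizing sheaf. Adjunction for the divisor $D\subset\tY$ combined with $\omega_S=\cO_S$ (since $S$ is a K3 surface) gives $\omega_{D/S}=\omega_\tY|_D\otimes\cO_D(D)=\cO_D(2D-5h)$. Then, applying $F_1^L\circ F_2^L\circ F_3^L$ to $X\in\Db(\tY)$ and using the projection formula together with $i^*\cO_\tY(L)=\cO_D(L)$,
\[
p_*\bigl(i^*X\otimes\cO_D(2h-D)\otimes\cO_D(2D-5h)[1]\bigr)=p_*\bigl(i^*X\otimes\cO_D(D-3h)[1]\bigr),
\]
which is exactly $p_*\circ i^*\circ\bT_{\cO_\tY(-3h+D)[1]}$ evaluated at $X$. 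Reassembling all five left adjoints gives the stated formula for $\Psi$. The only non-formal step is the identification $(\bR_E)^L=\bL_{\Serre_\cT^{-1}E}$ via Serre duality; the remainder is a careful bookkeeping of adjoints and line bundles.
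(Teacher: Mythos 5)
Your proof is correct and takes essentially the same route as the paper: both compute the left adjoint of $\Phi''$ factor by factor, using the Serre-duality identity that the left adjoint of $\bR_{\Eca}$ is $\bL_{\Serre^{-1}\Eca}$ for the two mutations and Grothendieck--Verdier duality for $p^*$, then combine the resulting twists. The paper phrases this as a chain of $\Hom$-isomorphisms and works with $\omega_D$ rather than $\omega_{D/S}$ (the two coincide here since $\omega_S=\cO_S$), while you additionally sketch a verification of the mutation-adjunction fact that the paper simply recalls; these are presentational differences, not a different argument.
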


\begin{proof}
Recall that if $\Eca$ is an exceptional object in $\Db(X)$, where $X$ is a smooth projective variety, then the functor $\bR_{\Serre_X(\Eca)}$ is right adjoint to $\bL_\Eca$, where $\Serre_X$ is the Serre functor of $\Db(X)$. Using this fact and that the canonical class of
$\tY$ is $-5h+D$, we obtain
\begin{align*}
\Hom_{\tY}(\Aca, \Phi''(\Bca))
=& \Hom_{\tY}(\Aca, (\bR_{\cO_{\tY}(-h)} \circ \bR_{\cO_{\tY}(-2h)} \circ \bT_{\cO_{\tY}(D-2h)} \circ i_* \circ p^*)(\Bca)) \\
=& \Hom_D((i^* \circ \bT_{\cO_{\tY}(2h-D)} \circ \bL_{\cO_{\tY}(3h-D)} \circ \bL_{\cO_{\tY}(4h-D)})(\Aca), p^* \Bca).
\end{align*}
Now we need to compute the left adjoint of $p^*$. The canonical bundle of $D$
is by adjunction
\[
\Canonical_D=(\Canonical_{\tY}\otimes\cO(D))|_D = \big(\cO(-5h+D)\otimes\cO(D)\big)|_D.
\]
So we have $\Canonical_D = i^*\cO_{\tY}(-5h+2D)$. We compute the left adjoint of $p^*$ using Grothendieck--Verdier duality
\[
\begin{aligned}
\Hom_D(\Fca, p^* \Bca)
&=\Hom_D(p^* \Bca, \Fca\otimes \Canonical_D[3])^{\vee} \\
&=\Hom_D(p^* \Bca, \Fca(-5h+2D)[3])^{\vee} \\
&=\Hom_S(\Bca, p_* \Fca(-5h+2D)[3])^{\vee} \\
&=\Hom_S(p_* \Fca(-5h+2D)[3], \Bca\otimes \Canonical_S[2]) \\
&=\Hom_S(p_* \Fca(-5h+2D)[1], \Bca).
\end{aligned}
\]
This shows that the left adjoint of $p^*$ is $p_*\circ \bT_{i^*\cO_{\tY}(-5h+2D)[1]}$. Putting everything together, we obtain
\[
\begin{aligned}
&\Hom_D((i^* \circ \bT_{\cO_{\tY}(2h-D)} \circ \bL_{\cO_{\tY}(3h-D)} \circ \bL_{\cO_{\tY}(4h-D)})(\Aca), p^*\Bca) \\
=& \Hom_S((p_* \circ \bT_{i^*\cO_{\tY}(-5h+2D)[1]} \circ i^*\circ \bT_{\cO_{\tY}(2h-D)} \circ \bL_{\cO_{\tY}(3h-D)} \circ \bL_{\cO_{\tY}(4h-D)})(\Aca), \Bca) \\
=& \Hom_S((p_* \circ i^* \circ \bT_{\cO_{\tY}(-5h+2D)[1]} \circ \bT_{\cO_{\tY}(2h-D)} \circ \bL_{\cO_{\tY}(3h-D)} \circ \bL_{\cO_{\tY}(4h-D)})(\Aca), \Bca) \\
=& \Hom_S((p_* \circ i^* \circ \bT_{\cO_{\tY}(-3h+D)[1]} \circ \bL_{\cO_{\tY}(3h-D)} \circ \bL_{\cO_{\tY}(4h-D)})(\Aca), \Bca),\\
\end{aligned}
\]
and thus
\[
\Psi=p_* \circ i^* \circ \bT_{\cO_{\tY}(-3h+D)[1]} \circ \bL_{\cO_{\tY}(3h-D)} \circ \bL_{\cO_{\tY}(4h-D)},
\]
proving the statement.
\end{proof}

We now identify $\Psi(j_*\cS)$ as the restriction of the spinor bundle on $Q$ to~$S$.

\begin{proposition} \label{prop:SinS}
We have that $\Psi (j_*\cS) =  t^*\cS$, where $t\colon S\rightarrow Q$ is the inclusion of $S$ into the quadric $Q$ which is embedded in $\mathbb{P}^4$ via $\pi\circ j$.
\end{proposition}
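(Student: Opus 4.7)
The plan is to evaluate $\Psi$ on $j_*\cS$ by applying its five constituent functors in order, relying on the geometry of diagram~\eqref{eq:singular-cubic-diagram} and the cohomology of the spinor bundle on the $3$-dimensional quadric $Q$. The crucial preliminary observation records the restrictions to $Q$ of the line bundles appearing in $\Psi$: since $\pi|_Q$ identifies $Q$ with the defining quadric of $S$ inside $\mathbb{P}^4$, we have $j^*\cO_{\tY}(h)=\cO_Q(1)$, and since $S=Q\cap D$ is cut out in $Q$ by the cubic from the $(2,3)$-complete intersection presentation of $S\subset\mathbb{P}^4$, we have $j^*\cO_{\tY}(D)=\cO_Q(3)$. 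Therefore $j^*\cO_{\tY}(3h-D)=\cO_Q$, $j^*\cO_{\tY}(4h-D)=\cO_Q(1)$, and $j^*\cO_{\tY}(-3h+D)=\cO_Q$.

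With these restrictions in hand, the first substantive step will be to show that the two outer mutations act as the identity on $j_*\cS$. By adjunction this reduces to the vanishings
\[
\Hom^\bullet(\cO_{\tY}(4h-D),j_*\cS)=\Ho^\bullet(Q,\cS(-1))=0,\qquad \Hom^\bullet(\cO_{\tY}(3h-D),j_*\cS)=\Ho^\bullet(Q,\cS)=0.
\]
Both follow from \cref{thm:qua}: the intermediate $\Ho^i$ for $0<i<\dim Q = 3$ vanish by \cref{thm:qua}(c), while $\Ho^0$ vanishes by the same statement and $\Ho^3$ is handled via Serre duality on~$Q$, using $\omega_Q=\cO_Q(-3)$ and $\cS^\vee=\cS(1)$ from \cref{thm:qua}(d). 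Consequently $\bL_{\cO_{\tY}(3h-D)}\circ\bL_{\cO_{\tY}(4h-D)}(j_*\cS)=j_*\cS$.

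Next, I will apply the twist $\bT_{\cO_{\tY}(-3h+D)[1]}$: by the projection formula and the restriction $j^*\cO_{\tY}(-3h+D)=\cO_Q$, this simply returns (a shift of) $j_*\cS$. To handle the ensuing $i^*$ I will invoke derived base change (\cref{prop:com}) for the cartesian square of \cref{lem:D-Q-square-cartesian}: the hypotheses are met because $j$ is the embedding of the smooth Cartier divisor $Q$ (hence regularly immersed), $D$ is smooth (hence Cohen--Macaulay), and the codimensions $\codim_{\tY}(D)=\codim_{Q}(S)=1$ match. This yields $i^*j_*\cS=s_*t^*\cS$, and then applying $p_*$ collapses $s_*$ via the section identity $p\circ s=\id_S$ from \cref{lem:D-Q-square-cartesian}, leaving $t^*\cS$, as claimed.

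The only genuine content in this plan is the cohomology vanishings for the spinor bundle and the derived base change step; the rest is bookkeeping through adjunctions and projection formulas. The most delicate point is verifying the hypotheses of \cref{prop:com}, but all the required geometric data has already been set up in \cref{lem:D-Q-square-cartesian}, so this reduces to a dimension count. Careful tracking of the shift introduced by $\bT_{\cO_{\tY}(-3h+D)[1]}$ will be needed to match the stated formula exactly.
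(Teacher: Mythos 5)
Your proposal is correct and proceeds along essentially the same lines as the paper: the two outer mutations vanish by the cohomology of the spinor bundle (as in \cref{rem:twist-vanish}), the twist restricts to $\cO_Q$ on $Q$, and $p_*i^*j_*\cS$ is computed by tor-independent base change applied to the cartesian square of \cref{lem:D-Q-square-cartesian}. One small slip in the base-change step: with $g=i$ and $f=j$ in \cref{prop:com} (so that $g^*f_*=i^*j_*$), the stated hypotheses concern $g=i$ being lci and $X=Q$ being Cohen--Macaulay, whereas you verify that $j$ is regularly immersed and that $D$ is Cohen--Macaulay. Both sets of conditions yield the needed tor-independence here, since $Q$, $D$, and $\tY$ are all smooth and both divisors are regularly embedded, but a literal application of the proposition should cite $i$ and $Q$. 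Your warning about the shift from $\bT_{\cO_{\tY}(-3h+D)[1]}$ is also well placed: the paper's own computation quietly applies $\bT_{\cO_{\tY}(-3h+D)}$ without the $[1]$, so the identity $\Psi(j_*\cS)=t^*\cS$ holds only up to that shift, which is immaterial for the classical generation statement of \cref{prop:nodalc4}.
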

\begin{proof}
Note that the first two mutations in the formula of $\Psi$ have no effect, since we have, using the relation $D=3h-H$,
\[
\begin{aligned}
\Hom^\bullet_{\widetilde{Y}}(\cO_{\widetilde{Y}}(4h-D),j_*\cS)
&=\Hom^\bullet_{Q}(j^*\cO_{\widetilde{Y}}(4h-D),\cS) \\
&=\Hom^\bullet_{Q}(j^*\cO_{\widetilde{Y}}(h+H),\cS) \\
&=\Hom^\bullet_{Q}(j^*\cO_{\widetilde{Y}}(h),\cS) \\
&=\Hom^\bullet_{Q}(\cO_Q(h),\cS) =0,
\end{aligned}
\]
and similarly for the second mutation. Applying $\bT_{\cO_{\tY}(-3h+D)}$, we get
\[
  j_*\cS\otimes \cO_{\tY}(D-3h)=j_*\cS\otimes \cO_{\tY}(-H)=j_*(\cS\otimes j^*\cO_{\tY}(-H))=j_*(\cS\otimes \cO_Q)=j_* \cS.
\]
The last step is to calculate $p_*i^*j_*\cS$. We consider the diagram
\begin{equation}\label{eq:D-Q-complete-intersection}
\begin{tikzcd}
Q\ar[d, "j"] & S\ar[l, "t"'] \ar[d, "s"'] \ar[dd, "\id", bend left] \\
\tY\ar[d, "\pi"] & D \ar[l, "i"'] \ar[d, "p"']\\
\mathbb{P}^4 & S\ar[l, ""'] 
\end{tikzcd}
\end{equation}
where the upper square is cartesian by \cref{lem:D-Q-square-cartesian}. We prove $i^*j_*=s_*t^*$ by checking the conditions of the base-change result \cref{prop:com}. 
Indeed, as $Q$ and $\tY$ are smooth, they are Cohen--Macaulay. The closed immersion $i$ of the exceptional divisor of the blow-up is by its very nature a local complete intersection in $\tY$. Finally, we have $\codim_{\tY}(D)=\codim_{D}(S)=1$.
Thus we obtain
\[
  p_*i^*j_*\cS=p_*s_*t^*\cS=t^*\cS.
  \qedhere
\]
\end{proof}

We conclude this section with the proof of \cref{prop:nodalc4}.

\begin{proof}[Proof of \cref{prop:nodalc4}]
By \cref{thm:ns} and \cref{lemma:sintildeA} we have that the kernel of $\Db(S) \to \cA_Y$ is classically generated by $j_*\cS$. Then the statement follows from \cref{prop:SinS}.   
\end{proof}

\bibliographystyle{alpha}
\bibliography{main.bib}

\end{document}